\theoremstyle{plain}
\newtheorem{theorem}{Theorem}[section]
\newtheorem{lemma}[theorem]{Lemma}
\theoremstyle{definition}
\newtheorem{remark}[theorem]{Remark}
\newtheorem{definition}[theorem]{Definition}
\numberwithin{equation}{section}
\def\be{\begin{equation}}
\def\ee{\end{equation}}
\begin{document}

\title[Asymptotics and Convergence]
{Asymptotics and Convergence\\ for the Complex Monge-Amp\`ere Equation}

\author[Qing Han]{Qing Han}
\address{Department of Mathematics\\
University of Notre Dame\\
Notre Dame, IN 46556, USA} \email{qhan@nd.edu}

\author[Xumin Jiang]{Xumin Jiang}
\address{Department of Mathematics\\
Fordham University\\
Bronx, NY 10458, USA}  \email{xjiang77@fordham.edu}

\begin{abstract}
We study the asymptotics of complete K\"ahler-Einstein metrics on strictly pseudoconvex domains in $\mathbb{C}^n$  and derive a convergence theorem for solutions to the corresponding Monge-Amp\`ere equation. 
If only a portion of the boundary is  analytic, the solutions
satisfy Gevrey type estimates for tangential derivatives. A counterexample for the model linearized equation suggests that there is no local convergence theorem for the complex Monge-Amp\`ere equation. 
\end{abstract}

\maketitle

\section{Introduction}\label{sec-Intro}
Let $\Omega \subseteq \mathbb{C}^n$ ($n\geq 2$) be a $C^7$, bounded, 
and strictly pseudoconvex domain. 
Consider 
\begin{align}\begin{split}\label{eq-KE}
\det(w_{i \bar{j}})&= e^{(n+1)w} \quad \text{ in } \Omega,\\
w_{i \bar{j}}&>0 \quad\text{ in }  \Omega,\\
w&=\infty \quad\text{ on }  \partial \Omega.
\end{split}
\end{align}
Then, $w_{i\bar{j}}d z^i d z^{\bar{j}}$ defines a K\"ahler-Einstein metric on $\Omega$.  The function $v=e^{-w}$ is zero on $\partial \Omega$ and satisfies the following equation of Fefferman \cite{Fefferman1976}:
\begin{equation*}
J(v)=(-1)^n \det \left(
\begin{array}{cc}
v & v_{\bar k}\\
v_j & v_{j \bar k}
\end{array}
\right)=1.
\end{equation*}
Fefferman  computed the formal power series expansion of $v$ on $\partial \Omega$.
Let $\rho$ be a  strictly plurisubharmonic defining function  of $\Omega$ and  consider a 
reference metric given by 
\begin{align}\label{eq-rho}
-(\log (-\rho))_{i\bar{j}}d z^i d z^{\bar{j}}.
\end{align}
Cheng and Yau \cite{ChengYau1980CPAM} proved that   \eqref{eq-KE}
admits a  solution $w$ of the form 
\begin{align}\label{eq-u}
w=-\log(-\rho)+u,
\end{align}
for some $u\in C^{n, \frac{1}{2}-\delta}(\bar\Omega)$,  for any $\delta>0$. In addition, they derived the uniqueness of solutions to \eqref{eq-KE}.
Lee and Melrose \cite{LeeMelrose} proved that $u$ has a logarithmic  expansion near $\partial\Omega$. More precisely, assuming that $\Omega$ is smooth, there are functions $a_j \in C^\infty (\bar \Omega)$ such that, for any $N\ge 0$, 
\begin{align}\label{eq-expan}
u - \sum_{i=0}^N a_j \rho^{(n+1)j}(\log (-\rho))^j \in C^{(n+1)N-1}(\bar \Omega)
\end{align}
vanishes to order $(n+1)N-1$ at the boundary.
In general, $u$ is not $C^{n+1}(\bar\Omega)$, due to the presence of the 
logarithmic factors. 

Our aim in this paper is to study the convergence of series in \eqref{eq-expan} 
or, more precisely, 
the analyticity of $u$ with respect to $z, \rho^{n+1} \log (-\rho)$ near $\partial \Omega$.
We first state a local regularity result.

\begin{theorem}\label{thm-Main1}
Assume that  $\Omega \subseteq \mathbb{C}^n$ is a $C^7$ bounded strictly pseudoconvex domain, with a smooth portion $\Gamma$ of $\partial \Omega$, and the defining function $\rho$ is smooth near  $\Gamma$. 
Then for the solution $w$ to \eqref{eq-KE},
$w + \log(-\rho) $ is a smooth function in
$z, \rho^{n+1} \log (- \rho)$ in a neighborhood of $\Gamma$ in $\bar \Omega$.
\end{theorem}

We point out that Theorem \ref{thm-Main1} is not one of the main results in this paper. In fact, we will provide an alternative proof of \eqref{eq-expan}
using techniques developed in \cite{HanJiang} 
and then provide a short proof of Theorem \ref{thm-Main1} based on \eqref{eq-expan}. 
It is obvious that the smoothness of $w + \log(-\rho) $ in
$z, \rho^{n+1} \log (- \rho)$ as in Theorem \ref{thm-Main1} implies  \eqref{eq-expan}, 
with the help of 
the formal computation in  \cite{Fefferman1976}. 

By refining the arguments leading to  Theorem \ref{thm-Main1} significantly, 
we prove the next theorem, considered as the main result in this paper.

\begin{theorem}\label{thm-GlobalConvergence}
Assume that  $\Omega \subseteq \mathbb{C}^n$ is a  bounded  strictly pseudoconvex domain and $ \rho$  is analytic near $\partial \Omega$.
Then, $u$ in \eqref{eq-u}
is analytic in $z,  \rho^{n+1}\log (-\rho)$ in a neighborhood of $\partial \Omega$ in $\bar \Omega.$
\end{theorem}

We also call Theorem \ref{thm-GlobalConvergence} a convergence theorem, 
as it implies the convergence of the series $\sum_{i=0}^\infty a_j \rho^{(n+1)j}(\log (-\rho))^j$ 
derived from \eqref{eq-expan}.

To prove Theorem \ref{thm-GlobalConvergence}, we construct a class of frames near the boundary 
explicitly and derive analyticity-type estimates. 
We now briefly discuss difficulties in the proof of  Theorem \ref{thm-GlobalConvergence}. 

The function $u$ as in Theorem \ref{thm-GlobalConvergence} satisfies an elliptic equation which becomes 
degenerate along the boundary. However, the degeneracy rates are different along different directions
due to the complex structure. 
This is sharply different from degenerate elliptic equations associated with many geometric problems 
in real Euclidean spaces,  notably the Loewner-Nirenberg problem 
and the study of conformally compact Einstein metrics. In bounded domains in real Euclidean spaces, 
these equations have the following form: 
\begin{equation}\label{eq-UDEE}\rho^2a_{ij}\partial_{ij}w+\rho b_i\partial_iw+cw=f,\end{equation}
where $\rho$ is a defining function of the domain, and the symmetric matrix $(a_{ij})$ 
has uniform positive upper and lower bounds in the domain. 
The factor $\rho^2$ in the leading terms in \eqref{eq-UDEE} causes degeneracy, 
with a degeneracy rate given by 2. 
The equation \eqref{eq-UDEE} is called a uniformly degenerate elliptic equation. 
However, the equation satisfied by $u$ as in Theorem \ref{thm-GlobalConvergence} has a different form. 

Let $\Omega\subset \mathbb C^n$ be the bounded smooth domain in Theorem \ref{thm-GlobalConvergence}. 
Locally, near each boundary point, there is a special tangent field of the boundary. 
This is the tangent field forming the complex structure with the normal field. 
For the equation satisfied by $u$, the degeneracy rate is still 2 in the leading terms 
along the normal direction and this special tangential direction, 
but is 1 in other $2n-2$ tangential directions. 
The degeneracy rate is also 2 in mixed directions. 
(Refer to Lemma \ref{lem-Lu} for details.) 
Such nonuniform degeneracy contributes all difficulties we encounter in this paper. 

To prove the analyticity as asserted in Theorem \ref{thm-GlobalConvergence}, 
we adopt the Fuschsian equation techniques in Kichenassamy and Littman \cite{KL:1}, \cite{KL:2}. 
A fundamental step is to derive analyticity-type estimates for the coefficient $a_0$ in \eqref{eq-expan}.
For uniformly degenerate elliptic equations, we are able to prove a local convergence theorem. 
Refer to \cite{HanJiang2} for the study of the minimal graphs in the hyperbolic space. 
In this paper, we prove the convergence in the global setting. 

We point out one subtlety in the derivation of the analyticity-type estimates for $u$. 
These estimates are to be established for weighted derivatives of $u$ and the weights are
determined by the degeneracy rate in the equation. 
Hence, we use weights of order 2 for the second derivatives in the normal and the special tangential directions, 
and weights of order 1 for the second derivatives in other $2n-2$ tangential directions. 
This suggests that we need to consider weights of order 3/2
for these directions, although fractional decay rates never appear in the equation. 
Refer to Lemma \ref{lem-MatrixEstm} for details. 

It is believed that the local convergence theorem  for  \eqref{eq-KE} may not hold in general. 
A counterexample for a  model linear equation with a similar structure as that of the linearized equation of $u$  
is given  in Section \ref{Sec-CntrEx}. 
Due to the different degeneracy rates along different tangential directions, 
we construct a solution that is not analytic  with respect to the tangential coordinates 
along the boundary.

In the local setting, we have the following result.

\begin{theorem}\label{thm-LocalGevery}
Assume that  $\Omega \subseteq \mathbb{C}^n$ is a $C^7$ bounded strictly pseudoconvex domain and  $\Gamma$ is an open portion of $\partial \Omega$ such that $\rho$ is analytic near $\bar \Gamma$. 
Let $u$ be as in \eqref{eq-u}.
Then, there are  constants $D, B>0$ such that, for any  $p \in \mathbb{N}$,
\begin{align}\label{eq-Dpu-Gev}
|D^p_{y'}u|\leq DB^p(p!)^2
\end{align}
in a neighborhood of $\Gamma$ in $\bar \Omega$,
where $D^p_{y'} u$ denotes any $p$-th order  derivative of $u$   with respect to tangential coordinates $y'$ defined on $\Gamma$. 
\end{theorem}

Here, we extend $y'$ on $\Gamma$ to its neighborhood along the level set of $\rho.$ 
 We point out that the example constructed in Section \ref{Sec-CntrEx} satisfies \eqref{eq-Dpu-Gev}
and the power 2 cannot be improved. 
The collection of functions satisfying \eqref{eq-Dpu-Gev} is the 
tangential Gevrey-2 space.

This paper is organized as follows. 
In Section \ref{sec-Preliminary}, we introduce some basic notations and recall the work of Cheng and Yau \cite{ChengYau1980CPAM}.  In Section \ref{sec-Frame}, we build a set of frames, mainly developed for analyticity estimates of $u$.
In Section \ref{sec-FormalComputation}, we derive the linearized equation under local frames. 
In Section \ref{sec-BasicEstimates}, we derive a weighted $C^{2,\alpha}$ estimates of $u$. 
In Section \ref{sec-TE}, we  establish estimates 
involving tangential derivatives. 
In Section \ref{sec-thm-1}, we prove Theorem \ref{thm-Main1}.
In Section \ref{Sec-CntrEx}, we construct a counterexample for the analyticity of the solutions to
a model linear equation, and show that the example is in Gevrey-2 space. 
In Section \ref{Sec-Converg}, we prove Theorem \ref{thm-GlobalConvergence}. 
In Section \ref{Sec-Local}, we prove Theorem \ref{thm-LocalGevery}. 

We would like to thank Hans-Joachim Hein, Yalong Shi, and  
Xiaodong Wang for  helpful discussions.

 \section{Preliminary}\label{sec-Preliminary} 

In this paper, we use the  index sets $I= \{1,\cdots, 2n\}, I_1 =  \{i \in I: i\neq 2n\}$ and $I_2 = \{i\in I: i\neq n, 2n\}$.
In addition, we always denote by $d(\cdot)$ the Euclidean distance function to the boundary of the domain $\Omega$.

Assume that $\rho$ is a strictly plurisubharmonic defining function of $\Omega$, 
i.e., $\rho=0$ on $\partial \Omega$, $d\rho \neq 0$ on $\partial \Omega$, 
$(\rho_{i\bar{j}})_{i, j \in I}>0$ in $\bar{\Omega}$ and $\Omega= \{ \rho < 0\}$. 
Specifically, we take
\begin{align}\label{def-lambda}
\rho=e^{-\lambda d}-1\quad\text{near }\partial \Omega, 
\end{align}
for some $\lambda>0$ large. We only need to prove Theorems \ref{thm-Main1}-\ref{thm-LocalGevery} when $\rho$ is given by \eqref{def-lambda}.

Let $z^1,\cdots, z^n$ be complex coordinates of $\mathbb{C}^n$ and $x^1, \cdots, x^{2n}$ be the corresponding real coordinates such that $z^i = x^{i+n} +\sqrt{-1}x^i$ for $i=1,\cdots, n$.
We also use the principal coordinates $y^1, \cdots, y^{2n}$ near $\partial \Omega$, which satisfy that
 $y^{2n}= d$, and for $y'=(y^1, \cdots, y^{2n-1})$, 
$x = (y', \varphi(y'))+\nu (y') d$,  where the graph of $\varphi$ is a portion of $\partial \Omega$ and $\nu$ is the unit inner normal vector of $\partial \Omega$. 
See Section 14.6 of \cite{GT}.
 In addition, 
\begin{equation}\label{eq-DxDy}
\left(\frac{\partial x}{\partial y}\right)_{2n\times 2n}=\left(
\begin{array}{ccccc}
1+\frac{\partial \nu_1}{\partial y^1}d & \frac{\partial \nu_1}{\partial y^2}d 
& \ldots & \frac{\partial \nu_1}{\partial y^{2n-1}}d & \nu_1 \\
\frac{\partial \nu_2}{\partial y^1}d & 1+\frac{\partial \nu_2}{\partial y^2}d 
& \ldots & \frac{\partial \nu_2}{\partial y^{2n-1}}d & \nu_2 \\
\vdots&\vdots& &\vdots&\vdots\\
\frac{\partial \nu_{2n-1}}{\partial y^1}d & \frac{\partial \nu_{2n-1}}{\partial y^2}d 
& \ldots & 1+\frac{\partial \nu_{2n-1}}{\partial y^{2n-1}}d & \nu_{2n-1} \\
\frac{\partial \varphi}{\partial y^1}+\frac{\partial \nu_{2n}}{\partial y^1}d 
& \frac{\partial \varphi}{\partial y^2}+\frac{\partial \nu_{2n}}{\partial y^2}d 
& \ldots & \frac{\partial \varphi}{\partial y^{2n-1}}+\frac{\partial \nu_{2n}}{\partial y^{2n-1}}d & \nu_{2n}
\end{array}
\right),
\end{equation}
Denote this matrix by $A$ , and  its inverse  by $B$.  Then
\begin{align}\label{eq-B}
B = I - (\nu_1, \cdots, \nu_{2n-1}, -1)^T \cdot (\nu_1, \cdots, \nu_{2n}) + \mathbf{O}(d),
\end{align}
where $\mathbf{O}(d)$ denotes a matrix with smooth and  $O(d)$ entries. Set $\frac{\partial}{\partial d} = \frac{\partial}{\partial y^{2n}}$. Notice that
\begin{align}\label{eq-Yn-Y2n}
Y_{2n} = \frac{\partial}{\partial d} , \,\, Y_n =J  \left(\frac{\partial}{\partial d}\right)
\end{align}
are defined in a neighborhood of $\partial \Omega.$

Let $g_{i \bar{j}}=(-\log  (-\rho))_{i \bar{j}}$. 
For any  interior $P$ near $\partial \Omega$, we can find a $Q\in \partial \Omega$ such that $ d(P) =dist(P, Q)$. Use the complex coordinates $(z^i_Q, \cdots, z^n_Q)$ defined as \eqref{eq-zQ}.
Cheng and Yau \cite{ChengYau1980CPAM}  proved that
$g_{i\bar{j}}$ satisfies the condition of bounded geometry in the coordinates chart
\begin{align}\label{eq-Theta}
&\qquad (\vartheta^1, \cdots, \vartheta^n)=
\frac{(2d(P)-d(P)^2)^\frac{1}{2}}{z^n_Q+ d(P)-z^n_Q d(P)}\left( z^1_Q, \cdots,  z^{n-1}_Q, \frac{z^n_Q-d(P)}{(2d(P)-d(P)^2)^\frac{1}{2}} \right).
\end{align}
When $d(P)<1$, 
we define the $C^{k, \alpha}_g$-norm on $\{|\vartheta|<\eta\}$ to be $C^{k, \alpha}$ norm
with respect to $\{\vartheta^1, \cdots, \vartheta^n\}$. For the global norm, we use the $\mathbb{C}^n$ coordinate charts for $\{d(P)>1\}$ and a family of holomorphic coordinate charts $(U, (\vartheta^1, \cdots, \vartheta^n))$ covering $\{0<d(P)\leq 1\}$. Define the global norm to be the supremum of the $C^{k, \alpha}_g$-norms on the local domain $U$'s and the regular $C^{k, \alpha}$-norm on $\{d(P)>1\}$. 

Denote
\begin{align}\label{eq-BPe}
B(P, \eta) = \{z\in \mathbb{C}^n :|\vartheta(z)|<\eta\}.
\end{align}
 Throughout the paper, we assume that $\eta>0$ is a fixed small number such that Lemma 
\ref{lem-smooth-dP} holds. We say that the $C^{k, \alpha}_g(B(P,\eta))$ norm of a function is uniformly bounded if the bound  is independent of $P$.

Cheng and Yau \cite{ChengYau1980CPAM} proved the following theorem on the existence and interior estimates of solutions.

\begin{theorem}\label{thm-IntEstm}
Suppose $\Omega$ is a $C^{k+2}$ strictly pseudoconvec domain 
and $\rho$ is a $C^{k+2}$ defining function of $\Omega$, for some $k\geq 5$. 
Then for any $F\in C_g^{k-2, \alpha}(\Omega)$, $\alpha\in (0,1)$, there exists a unique 
$u \in C_g^{k, \alpha}(\Omega)$, such that
\begin{align}\label{eq-MainEq}
\det( g_{i \bar{j}}+ u_{i \bar{j}}) &= e^{(n+1) u}e^F\det(g_{i \bar{j}}) \quad \text{in } \Omega, \\
\frac{1}{c} g_{i\bar{j}} & \leq g_{i\bar{j}} + u_{i\bar{j}} \leq c g_{i\bar{j}}\label{eq-equ-metric},
\end{align}
for some constant $c.$
\end{theorem}

In this paper, we set
\begin{align}\label{eq-F}
F= - \log (\det(\rho_{i \bar{j}} )(-\rho+ \rho^{ \bar{j} i} \rho _i  \rho_{\bar{j}})).
\end{align}
Then $w= -\log (- \rho) + u$  is a solution to \eqref{eq-KE}. By Theorem \ref{thm-IntEstm}, if $\Omega \in C^7$, then
\begin{align}\label{eq-C5a}
u \in C_g^{5, \alpha}(\Omega).
\end{align}

\begin{theorem}
Assume that $\Omega$ is a $C^7$ strictly pseudoconvex domain. Then
the solution to \eqref{eq-KE} is unique.
\end{theorem}
\begin{proof}
It
follows essentially from Lemma 5.3 in \cite{ChengYau1980CPAM}. Let $w$ be any solution to \eqref{eq-KE}.
We set a $C^7$ family  of strictly pseudoconvex domains $\Omega^s$, for $-\delta\leq s\leq  \delta$, to be $$\Omega^s = \{z\in \mathbb{C}^n : \rho(z) < -s\}.$$
Here $\delta$ is sufficiently small.
On each $\Omega^s$, there exists at least a $C^2(\Omega^s)$ solution $w_s$ to \eqref{eq-KE}.
Then $e^{-(n+1) w_s} \det((w_s)_{i \bar j})=1$. Lemma 5.3 in \cite{ChengYau1980CPAM} implies that $w_s \geq w_t$ in $\Omega^s$ if $-\delta\leq s< t\leq \delta$. So we have 
\begin{align}\label{eq-ws}
w_{-t} \leq w \leq w_{s}
\end{align}
 for $0<s, t\leq \delta$.
Now we fix $\{w_s\}_{-\delta\leq s \leq \delta}$ to be the family to solutions which are of form 
$$
w_s = -\log (-\rho_s ) + u_s,$$ where $\rho_s = \rho +s$ is the defining function of $\Omega_s$ and $u_s$ is given by Theorem \ref{thm-IntEstm}. By \cite{ChengYau1980CPAM}, 
\begin{align*}
\sup_{\Omega_s} |u_s| \leq (n+1)^{-1} \sup_{\Omega_s} |F_s|,
\end{align*}
where $F_s$ is given by \eqref{eq-F}, replacing $\rho, g_{i \bar j}$ by $\rho_s, (g_s)_{i \bar j} = - (\log (-\rho_s) )_{i \bar j} $. $F_s$ is uniformly bounded independent of $s$.  Proposition 4.2, (4.9) and (4.10) in \cite{ChengYau1980CPAM} supply an upper bound of $c$, which is independent of $s$, so that
\begin{align*}
\frac{1}{c} (g_s)_{i\bar{j}} & \leq (g_s)_{i\bar{j}} + (u_s)_{i\bar{j}} \leq c (g_s)_{i\bar{j}}.
\end{align*}
Then
 we apply the $C^3$ estimate in \cite{ChengYau1980CPAM} to derive that
 \begin{align}\label{eq-C^3-us}
 \|u_s\|_{C^3_{g_s}}( \Omega_s) \leq C,
 \end{align}
where $C$ is independent of $s$. By passing to a subsequence if necessary, on any compact domain $K \subset \Omega_0$, 
\begin{align}
u_s \rightarrow \tilde u \text{ in } C^3(K),
\end{align}
for some $\tilde u \in C^3(\Omega_0)$.
Then the theorem is concluded if we can show that $\tilde u = u_0$. As then there are two subsquences $\{w_s\}, \{w_{-t}\}$ such that $w_s \rightarrow w_0, w_{-t} \rightarrow w_0$ in $C^3(K)$ as $s, t\rightarrow 0^+.$ \eqref{eq-ws} shows that $w$ must be $w_0$. 

Now we show that $\tilde u = u_0$.
Fix an arbitrary $P\in \Omega$ near $\partial \Omega$.
Set $K = \overline {B(P, \eta)}$ defined as in \eqref{eq-BPe}. Then $\Omega^s$ contains $K$ if $s$ is sufficiently close to 0.
Let $\vartheta_s$ to be the coordinates defined in the form of \eqref{eq-Theta} with the same $P$ but we require that $Q\in \partial \Omega^s.$  $\vartheta^s$ converges to $\vartheta$ in $C^\infty(K)$ as $s \rightarrow 0.$ Recall that the $\|\cdot\|_{g_s}^3$ norm is defined as $C^3$ norm with respect to coordinates $\vartheta_s$.
By \eqref{eq-C^3-us}, $
 \|u_s\|_{C^3_{g_s}}(K) \leq C.
$ We take limit $s\rightarrow 0$ to derive that $
 \|\tilde u\|_{C^3_{g}}(K) \leq C.
$ This shows that $\tilde u$ should be the unique solution $u_0$ described in Theorem \ref{thm-IntEstm}.
\end{proof}

\section{Frames near $\partial \Omega$}\label{sec-Frame}
In this section, we build
  a set of frames $(U_Q, \{Y_i^Q\}_{i \in I})$, where $Q \in \partial \Omega$, $U_Q$ is a neighborhood of $Q$ in $\mathbb{C}^n$, and all vector fields $Y_i^Q$'s are  defined in $\mathbb{C}^n$, while only in the domain $U_Q \subseteq \mathbb{C}^n$, they form a frame. Similar frames are also used in \cite{LeeMelrose}. We require that when $\partial \Omega$ is analytic, all $Y_i^Q$'s are analytic in $\overline \Omega$. In proving Theorem \ref{thm-Main1}, we do not need such specific frames. 
But for the analyticity estimates, it plays an important roll.

Given complex coordinates $\{z^1, \cdots, z^n\}$ for $\mathbb{C}^n$, the corresponding real coordinates are $\{x^1,\cdots, x^{2n}\}$ such that $z^j = x^{j+n}+ \sqrt{-1} x^j $ for $j=1,\cdots, n$. Let 
\begin{align*}
m = \min\{ x_{2n}(Q): Q\in \partial \Omega\}
\end{align*}
Without lost of generally, we assume that $m=0$ and the origin $O \in \partial \Omega$. Then at $O$, the inner normal vector to $\partial \Omega$ is $\frac{\partial}{\partial x^{2n}}$. 

At any point $Q \in \partial \Omega$, denote by $\mathbf{\nu}(Q)\in \mathbb{C}^n$ its unit inner normal vector. 
Note that $\mathbf{\nu}(O)=(0,\cdots, 0, 1)^T.$ We  choose a  unitary transformation $T_1(Q): \mathbb{C}^n \rightarrow \mathbb{C}^n$ such that
\begin{align*}
T_1(Q): \nu(O)\rightarrow \nu(Q).
\end{align*}
$T_1$ can be derived
 from the Gram-Schmidt process. In fact, if the $n$-th column of $\nu(Q)$ satisfies $|\nu_n(Q)| \geq \frac{1}{\sqrt{n}}$, then we take $T_1$ as the Gram-Schmidt process of the matrix
\begin{equation}\label{eq-GS}
\left(
\begin{array}{ccccc}
1 & 0 & \ldots & 0 & \nu_1(Q) \\
0 & 1 & \ldots & 0 & \nu_2(Q) \\
\vdots&\vdots& &\vdots&\vdots\\
0 & 0& \ldots & 1 & \nu_{n-1} (Q)\\
0 & 0 & \ldots & 0 & \nu_{n}
(Q)\end{array}
\right).
\end{equation}
During the Gram-Schmidt process, we keep the $n$-th column. 
 The choice of $T_1$ is not unique and we pick one. If  $|\nu_n(Q)| \geq \frac{1}{\sqrt{n}}$ is not true, then there is some $i \neq n$ such that  $|\nu_i(Q)| \geq \frac{1}{\sqrt{n}}$. In this case, we  replace the $i$-th column of \eqref{eq-GS} by $(0, \cdots, 0,1)^T$ and apply  the Gram-Schmidt process to derive $T_1$.

Define $T_2(Q)$ as the translation from $O$ to $Q$.  We consider the following complex coordinate chart centered at $Q$,
\begin{align}\label{eq-zQ}
 (z^i_Q, \cdots, z^n_Q)^T= T_1^{-1}(Q)T_2^{-1}(Q) (z^1, \cdots, z^n)^T.
\end{align}
Consider the corresponding real coordinates as $(x_1^Q, \cdots, x_{2n}^Q)$.   Denote $Y_{2n}^Q = Y_{2n}$,  $Y_n^Q = Y_n$ as defined in \eqref{eq-Yn-Y2n} and for $i \in I_2,$
\begin{align}\label{eq-YiQ}
Y_i^Q = \frac{\partial}{\partial x_Q^i}-\left(\frac{\partial}{\partial x_Q^i}, Y_n\right) Y_n - \left(\frac{\partial}{\partial x_Q^i}, Y_{2n}\right) Y_{2n}.
\end{align}
 Then $\{Y_i^Q\}_{i\in I}$ is globally defined and it is a frame  in some neighborhood $U_Q$ of $Q$. 
Obviously, 
\begin{align}\label{eq-Yi-Y2n-Q}
\left(Y^Q_i, Y_{n}\right)=\delta_{i, n},\quad \left(Y^Q_i, Y_{2n}\right)=\delta_{i, 2n}.
\end{align}
Thus for $\alpha \in I_1$,
\begin{align*}
Y^Q_\alpha d = \left(Y^Q_\alpha, \frac{\partial}{\partial d}\right) = 0.
\end{align*}

We also have the following lemma,
\begin{lemma}\label{lem-Lie-Y}
For $\alpha, \beta \in I_1$, 
\begin{align*}
([Y^Q_\alpha, Y^Q_\beta], Y_{2n})=0, \quad ([Y^Q_\alpha, Y_{2n}], Y_{2n})=0.
\end{align*}
\end{lemma}
\begin{proof}
Both $Y^Q_\alpha, Y^Q_\beta$ are orthogonal to $Y_{2n}$.
\eqref{eq-Yi-Y2n-Q} implies that
\begin{align*}
Y_\alpha^Q = \sum_{i\in I_1}f_i(z) \frac{\partial}{\partial y^i_Q}, \quad Y_\beta^Q = \sum_{j\in I_1}g_j(z) \frac{\partial}{\partial y^j_Q}.
\end{align*} 
Here $i,j \in I_1$ means that  $\frac{\partial}{\partial y^{2n}_Q} =\frac{\partial}{\partial d}$ does not appear in  the above equations.
Thus
\begin{align*}
[Y^Q_\alpha, Y^Q_\beta] =\sum_{i, j\in I_1}\left( f_i \frac{\partial g_j}{\partial y_Q^i} \frac{\partial}{\partial y^j_Q} -g_j\frac{\partial f_i}{\partial y_Q^j} \frac{\partial}{\partial y^i_Q} \right),
\end{align*}
is orthogonal to $Y_{2n}.$ Similarly, 
$([Y^Q_\alpha, Y_{2n}], Y_{2n})=0.$
\end{proof}

In  \cite{ChengYau1980CPAM}, it is shown that in $\Omega_1=\{z\in \Omega : d(z)<1\},$ if $|\vartheta| \leq \eta$, then there is a $c$ depending only on $\eta$ such that
\begin{align}\label{eq-c}
|z^n_Q -d(P)| \leq cd(P), |z_Q^\alpha|\leq c\sqrt{d(P)}.
\end{align}
 In addition, 
\begin{align}\begin{split}\label{eq-z-theta}
d(P)\frac{\partial}{\partial z_Q^n} &=\frac{2 -d(P)}{(z^n_Q/d(P)+1-z^n_Q)^2}\frac{\partial}{\partial \vartheta^n}\\&\qquad - \frac{(1-d(P))(2-d(P))^\frac{1}{2}z_Q^\alpha /\sqrt{d(P)}}{(z^n_Q/d(P)+1-z_Q^n)^2 } \frac{\partial}{\partial \vartheta^\alpha}\\ 
\sqrt{d(P)}\frac{\partial}{\partial z_Q^\alpha} &=\frac{\sqrt{2-d(P)}}{z_Q^n/d(P)+1-z^n_Q} \frac{\partial}{\partial \vartheta^\alpha}.
\end{split}
\end{align}
 $z^n_Q/d(P), z^\alpha_Q/\sqrt{d(P)} $ are smooth with respect to $\vartheta$ in $B(P, \eta) = \{|\vartheta|<\eta\}$ according to  the following lemma.
 \begin{lemma}\label{lem-smooth-dP}
 Let $P \in \Omega_1$, $Q \in \partial \Omega$ and $d(P) =dist(P,Q)$. 
 Then there is a constant  $\eta>0$ independent of $P$, such that for $\beta =1,\cdots, n-1,$
 \begin{align}\label{eq-dP}
 \frac{z^n_Q}{d(P)}, \,\, \frac{z^\beta_Q}{\sqrt{d(P)}}, \,\,\frac{d}{d(P)} \in C^{\infty}_g(B(P, \eta)),
 \end{align}
where the $C^{k,\alpha}_g$ norms of terms in \eqref{eq-dP} are independent of the choice of  $P$. In addition,
\begin{align}\label{eq-d-dP}
\frac{d}{d(P)}  \in \left(\frac{1}{2}, 2\right)
\end{align} 
   in $B(P, \eta).$
 \end{lemma}
 \begin{proof}
 According to \eqref{eq-c},  $z^n_Q/d(P), z^\alpha_Q/\sqrt{d(P)} $ are continuous in $B(P, \eta)$, and their $L^\infty$ norms are independent of $P$.
According to   \cite{ChengYau1980CPAM},  for $\alpha,\beta \in \{1,\cdots, n-1\},$
\begin{align*}
\frac{\partial}{\partial \vartheta^n}\left(z^n_Q/d(P) \right) &=\frac{((1-d(P))z_Q^n/d (P)+1)^2}{2-d(P)},\\
\frac{\partial}{\partial \vartheta^n}\left(z^\alpha_Q/\sqrt{d(P)} \right) &=\frac{(1-d(P))((1-d(P))z_Q^n/d(P)+1)z_Q^\alpha/\sqrt{d(P)}}{2-d(P)},\\
\frac{\partial}{\partial \vartheta_\beta}\left(z^n_Q/d(P) \right) &=0,\\
\frac{\partial}{\partial \vartheta_\beta}\left(z^\alpha_Q/\sqrt{d(P)} \right) &=\delta_{\alpha\beta}\frac{(1-d(P))z_Q^n/d(P)+1}{(2-d(P))^\frac{1}{2}}.
\end{align*}
The right-hand side of these equations are smooth in $z^n_Q/d(P)$ and $z^\alpha_Q/\sqrt{d(P)}$ in $\Omega_1$.
Keep differentiating these equations with respect to $\vartheta_n$ and $\vartheta_\beta$, we derive 
 that $z^n_Q/d(P)$, $ z^\alpha_Q/\sqrt{d(P)}$ are smooth with respect to $\vartheta$ in $B(P, \eta)$.

Next,  for points $z\in B(P, \eta)$, applying the mean value theorem and \eqref{eq-c}, 
 \begin{align}\label{eq-d-mvt}
|d(z)-d(P) | &= \left|\frac{\partial d}{\partial x_Q^i}(\xi) (x^i-x^i(P))\right|\\
&\leq c d(P)\max_{|\vartheta|<\eta} B_{2n, 2n}  +c\sqrt{d(P)}\max_{\alpha\in I_1, |\vartheta|<\eta} B_{2n, \alpha},
\end{align}
where $|B_{2n, \alpha}| = |\nu_\alpha+O(d)|\leq C(\sqrt{d(P)}+d)$ according to \eqref{eq-c}, and $B_{2n, 2n} = \nu_n+O(d)$ is uniformly bounded. Requiring $\eta, c$ sufficiently small, we derive that
\begin{align*}
|d(z)-d(P) | \leq \frac{1}{2}d(P),
\end{align*} 
which implies that $\frac{d}{d(P)}\in (\frac{1}{2}, \frac{3}{2}).$ 
Applying \eqref{eq-DxDy} and
\begin{align*}
\frac{\partial}{\partial \vartheta^n} &=\frac{(1-d(P))z_Q^n/d(P)+1}{2-d(P)} d(P)\frac{\partial}{\partial z_Q^n} \\
&\qquad- \sum_{\alpha=1}^{n-1} \frac{(1-d(P))((1-d(P))z^n_Q/d(P)+1)z_Q^\alpha/\sqrt{d(P)}}{2-d(P)}  \sqrt{d(P)} \frac{\partial}{\partial z_Q^\alpha},\\
\frac{\partial}{\partial \vartheta^\alpha} &=\frac{(1-d(P))z_Q^n/d(P)+1}{\sqrt{2-d(P)}} \sqrt{d(P)}\frac{\partial}{\partial z_Q^\alpha}, 
\end{align*}
for $\alpha=1,\cdots, n-1,$ we derive that 
 $\frac{d}{d(P)}$  is smooth with respect to $\vartheta$ in $B(P, \eta)$ with uniform $C^k_g$ bounds independent of $P$.
 \end{proof}
 
 We fix a small $\eta$ such that Lemma \ref{lem-smooth-dP} holds.

\begin{lemma}\label{lem-Yi-xj}
  Let $P \in \Omega_1$, $Q \in \partial \Omega$ and $d(P) =dist(P,Q)$. Then  in $B(P, \eta)$,
\begin{align}\label{eq-Yi-xi}
Y_i^Q = \frac{\partial}{\partial x_Q^i} + \sum_{j=1}^{2n} \mathbf{O_\vartheta}(1)\sqrt{d(P)} \frac{\partial}{\partial x_Q^j},
\end{align}
where $\mathbf{O_\vartheta}(1)$ denotes some functions smooth in $\vartheta$ and whose $C^{k,\alpha}_g$ norms are independent of $P, Q$.
\end{lemma}
\begin{proof}
First, applying \eqref{eq-DxDy} with $d = y_{2n}^Q$,
\begin{align*}
Y_{2n}^Q = \frac{\partial}{\partial d} = \sum_{i=1}^{2n} \frac{\partial x_Q^j}{\partial d}  \frac{\partial}{\partial x_Q^j} =\frac{\partial}{\partial x^Q_{2n}}+ \sum_{j=1}^{2n}( A_{j, 2n}-\delta_{j, 2n})\frac{\partial}{\partial x_Q^j},
\end{align*}
where $A_{j, 2n}-\delta_{j, 2n}= \nu_j -\delta_{j, 2n}=
 \mathbf{O}_\vartheta(1)\sqrt{d(P)}$. In fact, $\nu_j-\delta_{j, 2n}$ is smooth in $z$ and $\nu_j -\delta_{j, 2n}=0$ at $P$. Then Lemma \ref{lem-smooth-dP} implies that 
 $\nu_j -\delta_{j, 2n} =  \mathbf{O}_\vartheta(1)\sqrt{d(P)}.$
 It verifies \eqref{eq-Yi-xi} for $i=2n.$

Secondly, 
\begin{align*}
Y_n^Q = J Y_{2n}^Q = J\left( \frac{\partial}{\partial x^Q_{2n}} + \sum_{i=1}^{2n} \mathbf{O}_\vartheta(1)\sqrt{d(P)}\frac{\partial}{\partial x_Q^i} \right),
\end{align*}
which verifies \eqref{eq-Yi-xi} for $i=n$.

Lastly, for $i \in I_2$, we use \eqref{eq-YiQ}.
Since \eqref{eq-Yi-xi} is true for case $Y^Q_{2n}, Y^Q_{n}$, the inner products $\left(\frac{\partial}{\partial x_Q^i}, Y_n\right)=\mathbf{O}_\vartheta(1)\sqrt{d(P)}$, $\left(\frac{\partial}{\partial x_Q^i}, Y_{2n}\right)= \mathbf{O}_\vartheta(1)\sqrt{d(P)}$, which concludes the lemma.

\end{proof}

 Lemma \ref{lem-Yi-xj} implies  immediately that in $B(P,\eta)$,
 \begin{align}\label{eq-xi-Yj}
 \frac{\partial}{\partial x_Q^i} = Y_i^Q + \sum_{j=1}^{2n} \mathbf{O_\vartheta}(1)\sqrt{d(P)}Y^Q_j,
 \end{align}

\begin{lemma} \label{lem-NormY}
If   $v$ is $C^{2,\alpha}_g$, then  for $\beta, \gamma =1,\cdots, n-1,$
 \begin{align}\label{eq-list-Y2v}
 dY_n v, dY_{2n} v,  \sqrt{d}Y_\beta v,
d^2 Y_n^2 v, d^2 Y_{2n}^2 v, d Y_{\beta}Y_{\gamma} v, d^{\frac{3}{2}} Y_{n}Y_{ \beta} v,  d^{\frac{3}{2}} Y_{2n}Y_{ \beta} v\in C^\alpha_g
 \end{align}
in $B(P, \eta) = \{|\vartheta|<\eta\}$. In addition, $C^\alpha_g$ norms of terms in \eqref{eq-list-Y2v} on $\{|\vartheta|<\eta\}$  are bounded by $C\|v\|_{C^{2,\alpha}_g}$, where $C$ is independent of the choice of $P$.
\end{lemma}
\begin{proof}

Applying
\eqref{eq-z-theta},  if $v\in C^{2,\alpha}_g$ in  $B(P, \eta)$, then
\begin{align}\label{eq-d-u-z}
d(P) \frac{\partial v}{\partial z^n_Q}, \sqrt{d(P)} \frac{\partial v}{\partial z^\alpha_Q}, d(P)^2 \frac{\partial^2 v}{\partial z^n_Q \partial \bar z^n_Q},d(P)^\frac{3}{2} \frac{\partial^2 v}{\partial z^n_Q \partial \bar z^\alpha_Q}, d(P) \frac{\partial^2 v}{\partial z^\alpha_Q \partial \bar z^\beta_Q} \in C^\alpha_g
\end{align}
 and their $C^\alpha_g$ norms  on $B(P, \eta)$ are independent of  the choice of $P$.

Applying \eqref{eq-xi-Yj}, Lemma \ref{lem-smooth-dP} and \eqref{eq-d-u-z},
functions listed in \eqref{eq-list-Y2v} are $C^\alpha_g$ in $\{|\vartheta|<\eta\}$. 
We  omit the index $Q$ in \eqref{eq-list-Y2v}
since \eqref{eq-list-Y2v} is still correct if we change $\{Y_i^Q\}$ to other local frames $\{Y_i^{\tilde Q}\}$ over $B(P, \eta)$.
\end{proof}

Let $\Gamma$ be a smooth open portion of $\partial \Omega$.
Define
\begin{align*}
\Gamma \times (0, R):= \{(y', d): (y', 0)\in \Gamma, 0<d<R\}.
\end{align*}
\begin{lemma}\label{lem-w-Ca}
Assume that $w\in C^{\alpha}_g$ in $\Gamma \times (0, R)$, and for any $B(P,\eta)\subseteq \Gamma \times (0, R),$
\begin{align}\label{eq-w-Ca}
\|w\|_{C^{\alpha}_g(B(P,\eta))}\leq Cd^{\epsilon}(P),
\end{align}
where $C$ is independent of $P$.
Then for any open subset $\Gamma'$ of $\Gamma$ such that $\bar \Gamma' \subseteq \Gamma$,
 there is  $\tilde \alpha>0$, such that
\begin{align*}
\|w\|_{C^{\tilde \alpha}(\Gamma' \times (0, R))} \leq C,
\end{align*}
for some different $C.$
\end{lemma}
\begin{proof}
\eqref{eq-w-Ca} implies $\|w\|_{L^\infty(\Gamma \times (0, R))} \leq C.$ The metric $g$ is equivalent to the Euclidean metric of $\mathbb{C}^n$ away from  $\partial \Omega$.
So we only have to prove that there is an $\tilde \alpha>0$ such that
$\|w\|_{C^{\tilde \alpha}(\Gamma' \times (0, r))} \leq C,$
 for some small $r>0.$
We set $r$ small enough such that if 
$P \in \Gamma' \times (0, r)$ then $B(P,\eta) \subseteq \Gamma \times (0, R).$
 In the rest of the proof, $C$ may change line by line.

 We set  $\alpha =\frac{\epsilon}{2}$ if $\alpha> \frac{\epsilon}{2}$. 
 
 First, for any $z \in$ $B(P,\eta) \subseteq \Gamma \times (0, r),$
 \begin{align*}
 \frac{|w(z)-w(P)|}{|\vartheta(z)|^{\alpha}} \leq Cd^\epsilon(P).
 \end{align*}
 Under the coordinates $z^Q$, where $Q\in \Gamma$ such that $d(P) = dist(P, Q),$
 \begin{align*}
 z^Q(P) = (0, \cdots, 0, d(P)).
 \end{align*}
 In the rest of the proof, we omit the index $Q$ for simplicity.
\eqref{eq-Theta} and \eqref{eq-c} imply that, for $\alpha = 1, \cdots, n-1,$
\begin{align*}
|\vartheta^\alpha| \sim  d(P)^\frac{1}{2}|z^\alpha| , \quad |\vartheta^n| \sim d(P) |z^n-d(P)|.
\end{align*}
Here $``a\sim b"$ means that, there is a $C$ independent of $P$ such that $C^{-1} b< a< Cb.$

Hence
  \begin{align*}
 \frac{|w(z)-w(P)|}{|d(P)^{-1} (z- z(P))|^\alpha} \leq Cd^\epsilon(P),
 \end{align*}
 which implies that
 \begin{align}\label{eq-w-Ca-1}
  \frac{|w(z)-w(P)|}{| z- z(P)|^\alpha} \leq Cd^{\epsilon-\alpha}(P)\leq C.
 \end{align}
 
 Secondly, we consider the case $z\notin B(P, \eta)$.
 
  If $|z- z(P)|>\delta$ for some fixed small $\delta>0$, then 
  \begin{align*}
  \frac{|w(z)-w(P)|}{| z- z(P)|^\alpha} \leq C.
 \end{align*}

 If $|z- z(P)|\leq \delta$  and $d(z) \in [2d(P), r)$, then
\eqref{eq-d-mvt}  implies that
\begin{align*}
\frac{1}{2}d(z) \leq |d(z)-d(P) | &= \left|B_{2n, i}(\xi) (x^i-x^i(P))\right|
\end{align*} 
By \eqref{eq-B}, $B_{2n, i}= \nu_{i} +\mathbf{O}(d)$.
So for $\delta$ small,  $|z-z(P)| =\sum_{i\in I} |x^i -x^i(P)|>\frac{1}{4} d(z)$. We derive
\begin{align*}
\frac{|w(z)-w(P)|}{| z- z(P)|^\alpha} \leq 4d(z)^{-\alpha} (Cd(z)^\epsilon + Cd(P)^\epsilon) \leq C.
\end{align*}

If $d(z) < 2d(P)$,
 \eqref{eq-Theta} and $z\notin B(P, \eta)$ imply that either $|z^n -d(P)|\geq cd(P)$ or 
 \begin{align*}
d(P)^{-\frac{1}{2}} |z^\alpha| + d(P)^{-1}|z^n -d(P)| \geq C^{-1}\eta,
 \end{align*}
 for some $C>0$, which is independent the choice of $P.$ Hence we have
 \begin{align*}
 |z - z(P)| \geq  \min\{C^{-1}\eta, c\} d(P), 
 \end{align*}
which implies that
 \begin{align*}
   \frac{|w(z)-w(P)|}{| z- z(P)|^\alpha} &\leq Cd(P)^{-\alpha} (d(z)^\epsilon + d(P)^\epsilon)\\
   & < C,
 \end{align*}
where we applied $d(z) <2 d(P)$.
\end{proof}

\section{Linearized equation and formal computation}\label{sec-FormalComputation}

Define the linearized operator of \eqref{eq-MainEq}  as
\begin{align*}
L u =\left.\frac{d}{d t}\right|_{t=0} \frac{\det(g_{i\bar j} +t u_{i\bar j})}{e^{(n+1)tu} \det(g_{i \bar j})} = g^{\bar k j}u_{j \bar k} - (n+1) u.
\end{align*}
We study $Lu$ using the frames defined in Section \ref{sec-Frame}.

\begin{lemma}\label{lem-Lu} 
Let  $\{Y_i\}_{i=1}^{2n}$ be any smooth frame on $U\subseteq \mathbb{C}^n,$ satisfying
$Y_{2n}=\frac{\partial}{\partial d}, Y_n =J\frac{\partial}{\partial d}$ and
\begin{align}\label{eq-Yi-Y2n}
\left(Y_i, Y_{n}\right)=\delta_{i, n},\quad \left(Y_i, Y_{2n}\right)=\delta_{i, 2n}.
\end{align}
Then
for any $u \in C^2$, we have that in $U \cap \Omega,$
\begin{align}\label{eq-MN}\begin{split}
g^{\bar k j}u_{j \bar k} &= d^2  (1+\mathbf{O}(d)) (u_{dd}+ Y_n^2 u)-((n-1)d+ \mathbf{O}(d^2))u_d\\
&\qquad+ d^2 \sum_{\beta \in I_2} a_{ \beta d} Y_\beta u_d 
+d^2 \sum_{\beta\in I_2} a_{\beta n} Y_\beta Y_n u
+d \sum_{ \beta,\gamma  \in I_2} a_{\beta \gamma} Y_\beta Y_\gamma u\\
&\qquad +\mathbf{O}(d)\sum_{\beta \in I_1} b_\beta Y_\beta u,
\end{split}
\end{align}
where $\mathbf{O}(d)$ denotes a smooth function that is $O(d)$.
The coefficients $a_{ij}$, $b_i$ are smooth functions of $z$ and $(a_{\beta \gamma})_{\beta,  \gamma\in I_2}$ is uniformly positive definite.
\end{lemma}


\begin{proof}
Recall that $\rho= e^{-\lambda d}-1$,  $g_{i\bar j} = -(\log (-\rho))_{i\bar j}$ and its inverse is
\begin{align}\label{eq-inverse}
g^{ \bar{j} i}&=-\rho\left({\rho}^{ \bar{j} i}+\frac{{\rho}^{ \bar{l} i} {\rho}^{\bar{j} m}{\rho}_{\bar l} \rho_{m}}{{\rho}
-{\rho}^{ \bar{q} p}\rho_{p}{\rho}_{\bar q}}\right),
\end{align}
where the term $-{\rho}^{ \bar{q} p}\rho_{p}{\rho}_{\bar q}$ is  strictly positive due to the strict pseudoconvexity of the domain.  We observe that $g^{ \bar j i} \frac{\partial^2}{\partial z_i \partial \bar{z}_j}$ is a degenerate elliptic operator but smooth   up to boundary. Under a local frame $\{Y_i\}$,
\begin{align}\label{eq-fij-fk}
g^{ \bar j i} \frac{\partial^2}{\partial z_i \partial \bar{z}_j} = \sum_{i, j=1}^{2n} \tilde a_{ij}(z) Y_i Y_j  + \sum_{i=1}^{2n} \tilde b_i(z) Y_i,
\end{align}
where the coefficients $ \tilde a_{ij},\tilde  b_k$'s are smooth functions, and are independent of the choice of complex coordinates.

We first compute $\tilde a_{ij}, \tilde b_k$'s  in \eqref{eq-fij-fk} pointwise. 
For any $P \in U$, we can find a point $Q \in \partial \Omega$ such that $d(P) = dist(P, Q)$. 
In the rest of this proof, we use  complex coordinates $(z^i_Q, \cdots, z^n_Q), $   real coordinates $(x^1_Q, \cdots, x^{2n}_Q),$ and the corresponding principal coordinates $(y^1_Q, \cdots, y^{2n}_Q),$ but omit the index $Q$. Since $\frac{\partial d}{\partial z_i}=\delta_{i n}/2$ at $P$, \eqref{eq-inverse} implies that at $P$,
\begin{align}\label{eq-MbyFrame}\begin{split}
 \\ 
g^{\bar j i}&=-\rho\frac{\rho^{\bar j i}\rho-\frac{1}{4}\lambda^2 {\rho}^{\bar j i} {\rho}^{ \bar{n} n} +
\frac{1}{4}  \lambda^2 {\rho}^{ \bar{n} i} {\rho}^{\bar{j} n} }{\rho-\frac{1}{4} \lambda^2 {\rho}^{ \bar{n}n} }.
\end{split}\end{align}
We observe that  $g^{ \bar{j} i}$ is $O(\rho^2)$ if $i$ or $j=n$, and is $O(\rho)$ otherwise.

On the other hand  at $P$, by \eqref{eq-DxDy}, 
\begin{equation}\label{eq-DxDy-P}
[A]_{2n\times 2n}=\left(
\begin{array}{ccccc}
1-\frac{\partial^2 \varphi}{\partial y^1 \partial y^1}d & -\frac{\partial^2 \varphi}{\partial y^1\partial y^2} d 
& \ldots & -\frac{\partial^2 \varphi}{\partial y^1\partial y^{2n-1}}d & 0 \\
-\frac{\partial^2 \varphi}{\partial y^2 \partial y^1}d & 1-\frac{\partial^2 \varphi}{\partial y^2 \partial y^2}d& \ldots 
&- \frac{\partial^2 \varphi}{\partial y^2 \partial y^{2n-1}}d & 0 \\
\vdots&\vdots& &\vdots&\vdots\\
-\frac{\partial^2 \varphi}{\partial y^{2n-1} \partial y^1}d & -\frac{\partial^2 \varphi}{\partial y^{2n-1} \partial y^2}d 
& \ldots & 1-\frac{\partial^2 \varphi}{\partial y^{2n-1} \partial y^{2n-1}}d & 0 \\
0& 0& \ldots & 0 & 1
\end{array}
\right),
\end{equation}
and by the proof of Lemma 14.17 in \cite{GT},
\begin{align*}
\frac{\partial^2 d}{\partial x^i \partial x^j}=B_{k j} \frac{\partial \nu_i}{\partial y^k}  
= - \frac{\partial^2 \varphi}{\partial y^i \partial y^j} +O(d),
\end{align*}
where $O(d)$ denotes a term bounded by $Cd$ where $C$ is independent of $P$. If $i$ or $j=2n$,  $\frac{\partial^2 \varphi}{\partial y^i \partial y^j}=0$ since $\varphi$ is independent of $y^{2n}$.
 We have, for $1\leq i, j <2n$,
\begin{align*}
\frac{\partial^2 \rho}{\partial x^i \partial x^j}=e^{-\lambda d} \left(-\lambda \frac{\partial^2 d}{\partial x^i \partial x^j}
+\lambda^2 \frac{\partial d}{\partial x^i} \frac{\partial d}{ \partial x^j}\right)
=\lambda\frac{\partial^2 \varphi}{\partial y^i \partial y^j} +O(d).
\end{align*}
 Thus, we obtain $\rho_{i \bar{j}}$ in terms of derivatives of $\varphi$ and $O(d)$ terms. Moreover, since $\frac{\partial}{\partial x^j} =B_{lj} \frac{\partial}{\partial y^l}$,
\begin{align}
\begin{split}\label{eq-dxdx}
\frac{\partial}{\partial x^i }\frac{\partial}{\partial x^j} &
=B_{p i}\left(\frac{\partial}{\partial y^p }B_{lj} \right)\frac{\partial}{\partial y^l }
+B_{m i}B_{l j}\frac{\partial}{\partial y^m }\frac{\partial}{\partial y^l}\\
&=\left( -\frac{\partial^2 \varphi}{\partial y^i \partial y^j}+O(d)\right) \frac{\partial}{\partial d}+ \frac{\partial^2}{\partial y^i \partial y^j}
+ d\sum_{k, l=1}^{2n}  C_{ij}^{kl}\frac{\partial^2}{\partial y^k \partial y^l} 
+\sum_{\alpha\in I_1} C^\alpha_{i j} \frac{\partial}{\partial y^\alpha}\\
&=\left(-\lambda^{-1}\frac{\partial^2 \rho}{\partial x^i \partial x^j}+O(d)\right) \frac{\partial}{\partial d}+ \frac{\partial^2}{\partial y^i \partial y^j}
+ d\sum_{k, l=1}^{2n}  C_{ij}^{kl}\frac{\partial^2}{\partial y^k \partial y^l}
+\sum_{\alpha \in I_1} C^\alpha_{i j} \frac{\partial}{\partial y^\alpha}, 
\end{split}
\end{align}
where  we used that at $P$ , for $i\in I_1$,
\begin{align*}
B_{p i}\left(\frac{\partial}{\partial y^p }B_{2n, j} \right)\frac{\partial}{\partial d }
&= -B_{pi} B_{2n, m}\frac{\partial A_{mq}}{\partial y^p }  B_{qj}\frac{\partial}{\partial d }\\
&=- \frac{\partial A_{2n, j}}{\partial y^i }\frac{\partial}{\partial d} +O(d)\frac{\partial}{\partial d}\\
&= - \frac{\partial^2 \varphi}{\partial y^i \partial y^j} \frac{\partial}{\partial d}+O(d)\frac{\partial}{\partial d}.
\end{align*}
Combining \eqref{eq-MbyFrame}, \eqref{eq-dxdx} and the fact that $\rho = -\lambda d+O(d^2)$, we obtain that at $P$, 
\begin{align}\begin{split} \label{eq-Mw}
g^{ \bar{j} i} u_{i \bar{j}}&= d^2 (1+O(d)) \left(u_{dd}+ \frac{\partial^2 u}{\partial y^n \partial y^n}\right)
-((n-1)d+ O(d^2 ) )u_d\\
&\qquad+ d^2 \sum_{\beta \in I_2} C_{ \beta d} \frac{\partial u_d}{ \partial y^\beta}
+ d^2 \sum_{\beta \in I_2} C_{ \beta n} \frac{\partial^2 u}{\partial y^\beta \partial y^n }
+d \sum_{ \beta,\gamma  \in I_2} C_{\beta \gamma} \frac{\partial^2 u}{\partial y^\beta \partial y^\gamma }\\
&\qquad +d \sum_{\beta \in I_1} C_{\beta} \frac{\partial u}{\partial y^\beta}.
\end{split}
\end{align}
Assume that near $\overline{QP}$, $\frac{\partial}{\partial y^i} =\sum_j h_{ji}Y_j$, where $h_{ji}$'s are smooth functions. 
 Then  
\begin{align*}
\frac{\partial^2}{\partial y^i \partial y^k} = h_{ji}h_{lk}Y_j Y_l + \frac{\partial h_{lk}}{\partial y^i} Y_l 
\end{align*}
at $P$. 
When  $l=2n$, the term $\frac{\partial h_{lk}}{\partial y^i}  Y_l =0$ since $h_{2n,k}= (\frac{\partial}{\partial y^k}, Y_{2n})_g= \delta_{k, 2n}$. To make sure that no $dY_n^2 u$ term is generated from $d \sum_{ \beta,\gamma  \in I_2} C_{\beta \gamma} u_{\beta \gamma}$, 
we use the fact that $\frac{\partial}{\partial y^n} = \mathcal{J}\frac{\partial}{\partial y^{2n}}=  Y_n$ at $P$. Then by \eqref{eq-Yi-Y2n},  $h_{ji} = \delta_{ji}$  at $P$ if $i$ or $j \in \{n, 2n\}$. 
Combining  these, \eqref{eq-Mw} can be expressed as
\begin{align}\label{eq-Linear-Yi}\begin{split}
g^{\bar k j}u_{j \bar k} &= d^2  (1+O(d)) (u_{dd}+ Y_n^2 u)-((n-1)d+ O(d^2))u_d\\
&\qquad+ d^2 \sum_{\beta \in I_2} a_{ \beta d} Y_\beta u_d 
+d^2 \sum_{\beta\in I_2} a_{\beta n} Y_\beta Y_n u
+d \sum_{ \beta,\gamma  \in I_2} a_{\beta \gamma} Y_\beta Y_\gamma u\\
&\qquad +O(d)\sum_{\beta \in I_1} b_\beta Y_\beta w,
\end{split}
\end{align}
at $P$. Since $P$ is any point in $U$, \eqref{eq-Linear-Yi} holds in $U$ and the coefficients $a_{ij}, b_i$'s are uniformly bounded functions.
To show the regularity of the coefficients, we compare \eqref{eq-Linear-Yi} with \eqref{eq-fij-fk} and apply Taylor expansions of $\tilde a_{ij}(z), \tilde b_k(z)$. Take $a_{nn}$ for example. The comparison shows that $\tilde a_{nn}= d^2(1+O(d))=d^2+d^3 C_1$ for some bounded function $C_1$, while the Taylor expansion  shows that $\tilde a_{nn} = D_0(y') + D_1(y') d + D_2(y') d^2 + R_3(y', d)d^3$ with smooth  $D_0, D_1, D_2$ and $R_3$. 
Then
\begin{align*}
d^2+d^3 C_1 = D_0(y') + D_1(y') d + D_2(y') d^2 + R_3(y', d)d^3.
\end{align*}
Set $d=0$, we have $D_0=0$. Divided by $d$ and set $d=0$, we have $D_1=0$. Divided by $d$ again and set $d=0$, we have $D_2=1$. Finally we get that $C_1 = R_3$, which is smooth. Thus $\tilde a_{nn}=d^2(1+\mathbf{O}(d))$.

The uniform positivity of $(a_{\beta \gamma})_{\beta,  \gamma\in I_2}$ follows from \eqref{eq-MbyFrame} and the fact that $(\rho_{i\bar{j}})>0$ in $\bar{\Omega}$.
\end{proof}

We point out that if we  apply \eqref{eq-inverse}
to compute $a_{ij}, b_k$ in \eqref{eq-fij-fk} directly, then by \eqref{eq-DxDy},  \eqref{eq-B}, and the fact that
\begin{align*}
g^{\bar j i}\rho_i = \mathbf{O}(d^2), \,\,g^{\bar j i}\rho_{\bar j} = \mathbf{O}(d^2),
\end{align*}
we have
 \begin{align*}
g^{\bar j i}u_{i \bar j} = d^2  (1+\mathbf{O}(d)) u_{dd}-((n-1)d+ \mathbf{O}(d^2))u_d+\cdots
\end{align*}
where ``$\cdots$" denotes derivatives with respect to tangential or mixed directions.

Denote
\begin{align}
Q(u) = \log \frac{\det(g_{i \bar j} + u_{i \bar j})}{\det (g_{i\bar j})} - (n+1) u -F.
\end{align}
\eqref{eq-MainEq} is equivalent to $Q(u)=0.$

\begin{lemma}\label{lem-MatrixEstm}
If $u$ is any $C^2$ function satisfying \eqref{eq-equ-metric}, then
\begin{align}\label{eq-ddbar-u2}
|Q(u) -(Lu -F)|\leq C|\partial \bar \partial u|^2_{g}.
\end{align}
In addition, under a local frame $(U, \{Y_i\}_{i=1,\cdots, 2n})$,
 $Q(u) -Lu + F$ is a smooth function of $z$ and the arguments 
\begin{align}\label{eq-C2Term}
d^2 u_{d d}, d^2 Y_n^2 u, d^{\frac{3}{2}} Y_\beta u_{d}, d^{\frac{3}{2}} Y_\beta Y_n u, d^2 Y_\beta u_{d}, d^2 Y_\beta Y_n u,  d Y_{\beta}Y_{ \gamma} u,  d Y_i u,
\end{align}
in $U \cap \Omega$,
where $\beta, \gamma$ $\in I_2,$  $i \in I$.
\end{lemma}

\begin{proof}
First we show \eqref{eq-ddbar-u2}.
Define $g_t$ as a metric such that $(g_t)_{i\bar j} = g_{i \bar j}+ tu_{i \bar j}$  for $t\in [0,1]$. According to \eqref{eq-equ-metric}, $g_t$ is equivalent to $g$. Applying $\log \det(g_{i \bar j} + u_{i \bar j}) -\log \det (g_{i\bar j}) = \int_0^1 \frac{d}{dt} \log\det (g_{i\bar j}+ tu_{i\bar j}) dt$, we derive  that
\begin{align*}
|Q(u) - (Lu - F)| &=\left|   \log \frac{\det(g_{i \bar j} + u_{i \bar j})}{\det (g_{i\bar j})} -g^{\bar j i}u_{i \bar j}\right|\\
&= \left|\int_0^1 g_t^{\bar j i} dt \cdot u_{i \bar j} - g^{\bar j i}u_{i \bar j}
 \right|\\
 &=  \left|\int_0^1\int_0^1 \left(\frac{d}{ds} g_{st}^{\bar j i} \right)ds dt \cdot u_{i \bar j}
 \right|\\
  &=  \left|\int_0^1\int_0^1  t g_{st}^{\bar j p}u_{p \bar q} g_{st}^{\bar q i}ds dt \cdot u_{i \bar j}
 \right|\\
 &\leq C |\partial \bar \partial u|_g^2.
\end{align*}

Moreover, \eqref{eq-equ-metric} implies that $\frac{\det(g_{i\bar j} + u_{i \bar j})}{\det(g_{i \bar j})}$ is bounded  in between $\frac{1}{c^n}$ and $c^n$.
The term
\begin{align}\label{eq-sigma-l}
\frac{\det(g_{i\bar j} + u_{i \bar j})}{\det(g_{i \bar j})} = \det(I_n+g^{\bar k i} u_{i \bar j}) = 1+\sum_{l=1}^n\sigma_l \left(\kappa \left(g^{\bar k i} u_{i \bar j}\right)\right),
\end{align}
where $\kappa \left(g^{\bar k i} u_{i \bar j}\right)$ denotes the eigenvalues of the matrix $(g^{\bar k i} u_{i \bar j})$ and 
\begin{align*}
\sigma_l(\kappa) = \sum_{i_1<\cdots<i_l}\kappa_{i_1}\cdots \kappa_{i_l}.
\end{align*}
 Since the function $\log (1+ s)$ is smooth in $s$ when $1+s>\frac{1}{c^n}$, $Q(u)-Lu+F$ is smooth in $g^{\bar k i} u_{i \bar j}$, and hence smooth in $z,$ $Y_i Y_j u$ and $Y_k u$. We only need to concern the $d$ factors in \eqref{eq-C2Term}. To this end, we show that all $\sigma_l\left(\kappa (g^{\bar k i} u_{i \bar j})\right)$'s, for $l=1,\cdots, n$, are smooth in terms in \eqref{eq-C2Term}.

Denote 
$
\zeta = ( u_{d d}, Y_n^2 u, Y_\beta u_{d}, Y_\beta Y_n u,   Y_{\beta}Y_{ \gamma} u,  Y_i u).
$
Then
\begin{align}\label{eq-sigma-l-zeta}
\sigma_l\left(\kappa (g^{\bar k i} u_{i \bar j})\right) = \sum_{|\alpha|=l}C_\alpha\zeta^\alpha,
\end{align}
where $\alpha$ is a multi-index and $C_\alpha$'s are smooth functions.
 We compute $C_\alpha$ at a point $P \in \Omega$.
 Let $Q \in \partial \Omega$ such that $d(P) = dist(P,Q)$. Since \eqref{eq-sigma-l} is independent of choice of complex coordinates, we
 compute $\sigma_l$ under the complex coordinates $z^i_Q$ as defined in Section \ref{sec-Frame}.
 According to 
\eqref{eq-MbyFrame},  $g^{ \bar{j} i}$ is $O(d^2)$ at $P$ if $i$ or $j=n$, and is $O(d)$ otherwise.  Let $\frac{\partial}{\partial x_i} = \sum_{k=1}^{2n} f_{ki}Y^Q_k$ where $f_{ki}$'s are smooth functions and $f_{ki}= \delta_{ki}$ at $P$. Then at $P$,
\begin{align*}
\frac{\partial}{\partial x^i }\frac{\partial}{\partial x^j}  = Y^Q_i Y^Q_j + Y^Q_i(f_{lj})Y^Q_l,
\end{align*}
where $ Y^Q_i(f_{lj})$ is  uniformly bounded.
For $i =1, \cdots, n$ without summation, if $k\neq n$,
\begin{align}\label{eq-g-u-1}
g^{\bar k i} u_{i \bar j} &=O(d) (Y^Q_{i+n}-\sqrt{-1}Y^Q_i)(Y^Q_{j+n}+\sqrt{-1}Y^Q_j)u+ \sum_{l=1}^{2n}O(d) Y^Q_l u.
\end{align}
When $i=n$, the $O(d)$ terms in \eqref{eq-g-u-1} should be $O(d^2)$. 
If $k=n$,
  \begin{align}\label{eq-g-u-2}
g^{\bar n i} u_{i \bar j} &=O(d^2)(Y^Q_{i+n}-\sqrt{-1}Y^Q_i)(Y^Q_{j+n}+\sqrt{-1}Y^Q_j) u+ \sum_{l=1}^{2n}{O}(d^2) Y^Q_l u.
\end{align}
We conclude that $g^{\bar k i} u_{i \bar j}$ is linear in terms in \begin{align}\label{eq-C2Term-Q}
d^2 u_{d d}, d^2 Y_n^2 u, d^{\frac{3}{2}} Y^Q_\beta u_{d}, d^{\frac{3}{2}}Y^Q_\beta Y_n  u, d^2 Y^Q_\beta u_{d}, d^2  Y^Q_\beta Y_n u,  d Y^Q_{\beta}Y^Q_{ \gamma} u,  d Y^Q_i u,
\end{align}
with uniformly bounded coefficients, except for the case $k\neq n, i\neq n$ and $j=n,$ since we need a $d^\frac{3}{2}$ or $d^2$ factor rather than a $d$ factor  for $Y^Q_iY_{2n}   u, Y^Q_i Y_n u, Y^Q_{i+n} Y_{2n} u$ and $Y^Q_{i+n} Y_n u$ when $i\neq n$. 

 $\sigma_l\left(\kappa (g^{\bar k i} u_{i \bar j})\right)$ equals the sum of determinants of
submatrices, which are of form
\begin{align}\label{eq-sumatrix}
(g^{\bar k i} u_{i \bar j})_{k, j \in I^-},
\end{align}
where  $I^- \subset I$ and $|I^-|=l$. 
We use the Leibniz formula to compute the determinants.  
   In each product of $l$ entries of \eqref{eq-sumatrix}, if there is a factor $g^{\bar k i} u_{i \bar n}$ where $k\neq n$, then there is a term  $g^{\bar n i} u_{i \bar \gamma}$ for some $\gamma \neq n$ and rest terms are of form $g^{\bar \alpha i} u_{i \bar \beta}$ where $\alpha, \beta \neq n$. Applying \eqref{eq-g-u-1}, \eqref{eq-g-u-2}, analyzing the four cases that whether $i=n$ and whether $ l =n$, we conclude that, for $k\neq n$,
$
 g^{\bar k i} u_{i  \bar n}
 \cdot
 g^{\bar n l} u_{l \bar \gamma} 
$
is quadratic in terms in \eqref{eq-C2Term-Q} with uniformly bounded coefficients.
For example, when $i\neq n, l=n$,
 \begin{align*}
 g^{\bar k i} u_{i  \bar n}
 \cdot
 g^{\bar n l} u_{l \bar \gamma} =C(z) \cdot d^3 u_{i  \bar n}u_{n \bar \gamma},
 \end{align*}
which expands to terms like $d^\frac{3}{2} Y^Q_{i}Y_n u \cdot d^\frac{3}{2} Y^Q_\gamma Y_n u,$ $d^\frac{3}{2} Y^Q_{i} u_d \cdot d^\frac{3}{2} Y^Q_\gamma  u_d$  and $d^2 Y^Q_{i}Y_n u \cdot d Y^Q_l u.$

It is easy to check that at $P$, terms in 
\eqref{eq-C2Term-Q} can be converted to terms in \eqref{eq-C2Term} with uniformly bounded coefficients.  So  $\sigma_l\left(\kappa (g^{\bar k i} u_{i \bar j})\right)$ is a polynomial of degree $l$ in terms in \eqref{eq-C2Term} with uniformly bounded coefficients in $U$.
Comparing with \eqref{eq-sigma-l-zeta},
and applying the Taylor expansion of $C_\alpha$, we conclude that  
the uniformly bounded coefficients are actually smooth in $z$. This trick is also used in the proof of Lemma \ref{lem-Lu}.
\end{proof}

Given a $C^2$ function $\psi$, let $g_\psi$ be the metric defined as
\begin{align}\label{eq-g-psi}
(g_\psi)_{i\bar j} = g_{i\bar j}+ \psi_{i\bar j}.
\end{align}
$g_\psi$ is equivalent to $g$ when $d$ is small.  Denote
\begin{align*}
H_{i\bar{j}}=\rho_{i\bar{j}}-\rho \psi_{i \bar{j}}.
\end{align*}
Then,
\begin{align}\label{eq-gpsi-inverse}
g^{\bar j i}_\psi&=-\rho\left(H^{ \bar{j} i}
+\frac{H^{ \bar{l} i} H^{\bar{j}m}{\rho}_m \rho_{\bar{l}}}{{\rho}-{H}^{ \bar{q}p}\rho_{\bar{q}}{\rho}_p}\right),
\end{align} 
which equals
\begin{align}
-\rho\left(\frac{H^{\bar{j} i}\rho-\frac{1}{4}\lambda^2 H^{\bar{j} i} H^{\bar{n} n} +
\frac{1}{4}  \lambda^2 H^{\bar{n} i} H^{\bar{j} n} }{\rho-\frac{1}{4} \lambda^2 H^{ \bar{n} n}}\right),
\end{align}
when $d_i = \delta_{in}/2. $ We point out that replacing $g$ by $g_\psi$, \eqref{eq-MN} and \eqref{eq-ddbar-u2} are still correct. So we can do formal computation as below.
 
 For $k =0, \cdots, n$, we want to find a polynomial of degree $k$ in principal coordinates,
 \begin{align*}
 u_{(k)} = c_0 (y') + c_1 (y')d +\cdots + c_k (y')d^k, 
\end{align*}
 such that 
$$Q(u_{(k)} )= O(d^{k+1}).$$
We start with
$$u_{(0)}=-\frac{1}{n+1}F(y', 0).$$
In fact,  \eqref{eq-MN}, \eqref {eq-inverse} and \eqref{eq-ddbar-u2} implies that
\begin{align*}
Q(u_{(0)}) = Lu_{(0)} - F + (Q(u_{(0)})  - Lu_{(0)}+F) =O(d)
\end{align*}
only when $c_0(y')=-\frac{1}{n+1}F(y', 0)$. Here
\begin{align*}
|Q(u_{(0)})  - Lu_{(0)}+F| &\leq C|i\partial \bar \partial u_{(0)}|^2_g= C g^{\bar j p} u_{(0), p \bar q}g^{\bar q i}u_{(0), i \bar j} \leq Cd^2.
\end{align*}

Inductively, if $\psi = u_{(k-1)}$ is known, then 
$
Q(\psi) = O(d^k),
$
where the $O(d^k)$ term is a smooth function that can be expressed as $f_k(y' )d^k + O(d^{k+1})$, which follows that
\begin{align*}
\log \frac{\det(g_{i \bar j} + \psi_{i \bar j})}{\det (g_{i\bar j})} - (n+1) \psi -F = f_k(y' )d^k + O(d^{k+1}).
\end{align*}

 Thus
\begin{align*}
&Q(\psi+ c_k d^k) \\
&= \log \frac{\det((g_\psi)_{i \bar j} + (c_k d^k)_{i \bar j})}{\det (g_{i\bar j})} - (n+1) (\psi+ c_kd^k) -F \\
&= \log \frac{\det((g_\psi)_{i \bar j} + (c_k d^k)_{i \bar j})}{\det ((g_\psi)_{i\bar j})} + \log \frac{\det(g_{i \bar j} + \psi_{i \bar j})}{\det (g_{i\bar j})} - (n+1) (\psi+ c_kd^k) -F \\
&= \log \frac{\det((g_\psi)_{i \bar j} + (c_k d^k)_{i \bar j})}{\det ((g_\psi)_{i\bar j})} - (n+1)  c_kd^k + f_k(y' )d^k + O(d^{k+1}).
\end{align*}
Applying  \eqref{eq-MN} and \eqref{eq-ddbar-u2} to the first term, with $g$ replaced by $g_\psi,$ we derive that
\begin{align}\label{eq-Q-formal}
Q(\psi+ c_k d^k)  = (k-n-1)(k+1)c_k(y')  d^{k} +f_k(y') d^k+ O(d^{k+1}),
\end{align}
where the $O(d^{k+1})$ term also depends on $c_k$. Here we need to use
\begin{align*}
|i\partial \bar \partial (c_k d^k)|^2_{g_\psi}\leq C g^{\bar j p} (c_k d^k)_{p \bar q}g^{\bar q i}(c_k d^k)_{i \bar j}\leq Cd^{2k}.
\end{align*}
To derive this, at any point $P$ near $\partial \Omega$, we compute under  coordinates $z_Q$ where $Q \in \partial \Omega$ and $d(P) = dist(P,Q)$.
By \eqref{eq-Q-formal},  for $k=1,\cdots, n$, we can solve out a unique $c_k(y')$, such that 
$Q(\psi+ c_k d^k) = O(d^{k+1})$.

However, we cannot continue with $k ={n+1}$. 
Fefferman \cite{Fefferman1976} pointed out that a logarithmic term $ d^{n+1}\log d$ is needed to find  
\begin{align*}
u_{(n+1, 1)}=  c_0 +c_1 d+\cdots + c_n  d^{n}+c_{n+1,1} d^{n+1}\log d
\end{align*}
such that
\begin{align*}
Q(u_{(n+1,1)})= O(d^{n+2}\log d).
\end{align*}
In fact,
\begin{align*}
Q(u_{(n)}+ c_{n+1, 1} d^{n+1}\log d)  =-nc_{n+1,1}(y')  d^{k} +f_{n+1}(y') d^k+ O(d^{k+1}).
\end{align*}
We can solve out  $c_{n+1, 1} = f_{n+1}(y')/n.$ In addition, for arbitrary $C^2$ function $c_{n+1}(y')$, 
\begin{align*}
Q(u_{(n)}+ c_{n+1, 1} d^{n+1}\log d + c_{n+1} d^{n+1})  = O(d^{k+1}).
\end{align*}
We regard $c_{n+1} $ as a nonlocal coefficient. 
Given any smooth $c_{n+1} $,  for  $k>n+1$, we can solve out a unique
\begin{align}\label{eq-u-k}
u_{(k)}=\sum_{i=0}^nc_i(y')d^i+\sum_{i=n+1}^k\sum_{j=0}^{N_i} c_{i,j}(y')d^i (\log d)^j,
\end{align}
such that
\begin{align*}
Q(u_{(k)})=O(d^{k+1}).
\end{align*}
Kichenassamy and Littman \cite{KL:1}, \cite{KL:2} shows that if $\partial \Omega$ is analytic, then given any analytic $c_{n+1}$, \eqref{eq-u-k} converges to a solution to \eqref{eq-MainEq}-\eqref{eq-F} near $\partial \Omega$.

\section{$C^{2, \alpha}_g$ Estimate of $u - u_{(1)}$}\label{sec-BasicEstimates}
Let $u$ be a solution to \eqref{eq-MainEq}-\eqref{eq-F},
and let $\psi= u_{(1)}$ be the linear function derived from the formal computation in Section \ref{sec-FormalComputation}.
Denote 
$$v = u-\psi,$$ and rewrite \eqref{eq-MainEq} as 
\begin{align}\begin{split}\label{eq-MA1}
\log \frac{\det(g_{i \bar{j}}+\psi_{i \bar{j}}+v_{i \bar{j}})}{\det(g_{i \bar{j}}+\psi_{i \bar{j}})}&=(n+1)(\psi+v)+F -\log \frac{\det(g_{i \bar{j}}+\psi_{i \bar{j}})}{\det(g_{i \bar{j}})}\\&=(n+1)v+ F_1 d^2,
\end{split}
\end{align} 
where $F_1$ is $C^{1}$ up to $\partial \Omega$.

\begin{lemma}\label{lemma-basic-estimate-v} For any $\epsilon \in (0,1)$, 
$$|v|\le C|\rho|^{1+\epsilon}.$$
\end{lemma}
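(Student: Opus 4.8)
The plan is to establish the bound on $v = u - \psi$ via a barrier/maximum-principle argument applied to the equation \eqref{eq-MA1}. First I would exploit the structure noted right after \eqref{eq-MA1}: the equation reads
\begin{align*}
\log \frac{\det(G_{i\bar j} + v_{i\bar j})}{\det G_{i\bar j}} = (n+1) v + F_1 d^2,
\end{align*}
with $F_1 \in C^3(\overline\Omega)$ near $\partial\Omega$, and $G_{i\bar j}$ uniformly equivalent to the reference metric $g_{i\bar j}$. The key observations are: (i) $v$ vanishes at the boundary in the appropriate sense — indeed $u_0 = c_0(y')$ is the trace of $u$, $\psi = u_1$ already captures $u$ up to $O(d)$, and by Theorem \ref{thm-IntEstm} together with $u \in C^{5,\alpha}_g(\Omega)$ one knows $v$ is bounded and small near $\partial\Omega$; (ii) the inhomogeneous term $F_1 d^2$ is $O(|\rho|^2)$, hence certainly $O(|\rho|^{1+\epsilon})$ for any $\epsilon < 1$.

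The main step is to compare $v$ against barriers of the form $\pm A |\rho|^{1+\epsilon}$ for a large constant $A = A(\epsilon)$. Using the concavity of $\log\det$, the quasilinear operator
\begin{align*}
\mathcal{L}(v) := \log\det(G_{i\bar j} + v_{i\bar j}) - \log\det G_{i\bar j} - (n+1)v
\end{align*}
satisfies a comparison principle once we know $G_{i\bar j} + v_{i\bar j} > 0$, which holds near $\partial\Omega$ since $g_{i\bar j} + u_{i\bar j} = G_{i\bar j} + v_{i\bar j}$ is a metric by Theorem \ref{thm-IntEstm}. I would compute $\mathcal{L}(A|\rho|^{1+\epsilon})$: the dominant contribution comes from the linearization at $v=0$, namely $G^{i\bar j}(A|\rho|^{1+\epsilon})_{i\bar j} - (n+1)A|\rho|^{1+\epsilon}$. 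Writing things in the geodesic/frame coordinates of Section \ref{sec-FormalComputation}, the Laplacian-type term $G^{i\bar j}\partial_i\partial_{\bar j}$ acting on a function of $\rho$ alone behaves, by the computation underlying Lemma \ref{lem-MN} (where $M^{-1}_{i\bar j} w_{j\bar i}$ has leading term $d^2(w_{dd} + Y_n^2 w) - (n-1)d\, w_d$), like $\rho^2 \partial_\rho^2 + (\text{bounded})\cdot \rho\,\partial_\rho$ applied to $|\rho|^{1+\epsilon}$, which produces a term of size $\epsilon(1-\epsilon)|\rho|^{1+\epsilon}$ plus lower order — crucially of one sign for $0 < \epsilon < 1$ and of the same order $|\rho|^{1+\epsilon}$ as the zeroth order term. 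Choosing $A$ large enough that $A|\rho|^{1+\epsilon}$ dominates the $O(|\rho|^2)$ right-hand side and the error terms, we get $\mathcal{L}(A|\rho|^{1+\epsilon}) \le F_1 d^2 \le \mathcal{L}(v)$ in a collar $\{-r < \rho < 0\}$, while on the inner boundary $\{\rho = -r\}$ we have $A|\rho|^{1+\epsilon} = A r^{1+\epsilon} \ge |v|$ by taking $A$ large, and on $\partial\Omega$ both sides vanish. The comparison principle then yields $v \le A|\rho|^{1+\epsilon}$, and the symmetric argument gives the lower bound.

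The main obstacle I anticipate is handling the degeneracy of the operator at $\partial\Omega$ carefully: the coefficients $G^{i\bar j}$ degenerate (the relevant quantity is $M^{-1}_{i\bar j} = O(\rho^2)$ in the normal–normal entry and $O(\rho)$ in tangential entries, per \eqref{eq-MbyFrame}), so the comparison principle is not the classical one and one must either regularize (work on $\{-r < \rho < -\delta\}$ and let $\delta \to 0$, using that $v$ is controlled on $\{\rho = -\delta\}$ by the known a priori bound $u \in C^{5,\alpha}_g$) or invoke a degenerate-elliptic maximum principle tailored to the complete metric $g_{i\bar j}$. A secondary technical point is to verify that the nonlinear remainder in $\mathcal{L}$ — the sum $\sum_{k\ge 2} \mathrm{Tr}(M^{-1}N)^k$-type terms from Lemma \ref{lem-MatrixEstm} evaluated on the barrier — is genuinely lower order; but since those terms are $O(|u|^2_{C^2_g})$ and the barrier contributes at most $O(|\rho|^{1+\epsilon})$ to $C^2_g$-size while its square is $O(|\rho|^{2+2\epsilon})$, this is absorbed. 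Once this is in place the estimate $|v| \le C|\rho|^{1+\epsilon}$ follows.
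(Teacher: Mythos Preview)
Your barrier argument with $\pm A|\rho|^{1+\epsilon}$ is essentially the paper's \emph{second} step, but you are missing the \emph{first} step, and without it the argument does not close. The gap is in the claim that ``on $\partial\Omega$ both sides vanish'': at this stage you only know $v\in C^{5,\alpha}_g(\Omega)$ is \emph{bounded}, not that $v\to 0$ at $\partial\Omega$. Membership in the weighted space $C^{5,\alpha}_g$ gives no decay whatsoever---it is compatible with $v$ having a nonzero (or nonexistent) boundary trace. Your appeal to the formal computation (``$\psi=u_1$ captures $u$ up to $O(d)$'') is circular: the formal series only tells you what the expansion \emph{would} look like if one exists, not that the actual solution $u$ matches it. Your proposed regularization on $\{-r<\rho<-\delta\}$ then fails outright: on $\{\rho=-\delta\}$ the only available bound is $|v|\le C_0$, so you would need $A\delta^{1+\epsilon}\ge C_0$, forcing $A\to\infty$ as $\delta\to 0$.

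The paper fills this gap by first proving $|v|\le C|\rho|$ via a genuinely different comparison: set $\sigma=v+b\rho$ and apply the Cheng--Yau maximum principle for \emph{noncompact} manifolds (a sequence $p_i$ approaching the sup, with $\limsup\nabla^2_g\sigma(p_i)\le 0$). The point is that $\rho_{i\bar j}>0$ (strict plurisubharmonicity) gives the Hessian of $b\rho$ a definite sign, so $\det(G_{i\bar j}+v_{i\bar j})\le\det(G_{i\bar j}+\sigma_{i\bar j})$ at the almost-maximum, and the equation then forces $\limsup\sigma(p_i)\le 0$. This is not the same as your barrier $A|\rho|^{1+\epsilon}$, whose complex Hessian has no such sign. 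Only \emph{after} $|v|\le C|\rho|$ is established---hence $v\to 0$ on $\partial\Omega$---does the paper run the barrier argument with $M_\pm=\pm b(-\rho)^{1+\epsilon}$ on $\Omega_r$, exactly as you outlined.
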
 

\begin{proof} 
Take a small positive $r$, such that 
$(g_{i\bar j} + \psi_{i \bar j})> \frac{1}{2}g_{i\bar j}$ in
\begin{align*}
\Omega_r = \{z\in \Omega : 0<d(z)<r\}.
\end{align*}
We first show that  $|v|\le C|\rho|$ in $\Omega_r$. 
By Theorem \ref{thm-IntEstm}, $v$ is uniformly bounded. 
Take a constant $b$ to be determined later such that  
$$b>\frac{\|v\|_{C^0}}{1-e^{-\lambda r}}.$$
Then, $v+ b\rho < 0$ when $d = r$. Assume that the supremum of  $v+ b\rho$ is nonnegative in $\Omega_r.$

Consider the function
\begin{align}\label{eq-sigma-Max}
\sigma=v+ b\rho.
\end{align}
 Applying  Proposition 1.6 in Cheng-Yau \cite{ChengYau1980CPAM}, 
there is a sequence of points $\{P_i\} \subset \Omega_r$ such that, as $i\rightarrow \infty$,
\begin{align}\label{eq-max-CY}
\sigma(P_i) \rightarrow \sup_{\Omega_r} \sigma,\,\, {\limsup} (\sigma_{p \bar q}(P_i))\leq 0.
\end{align}
As $(\rho_{i\bar{j}})>0,$ $(\sigma_{i \bar j}) = ( v_{i \bar j} +b\rho_{i \bar j}) > (v_{i \bar j})$. Then
$${\limsup}  \frac{\det(g_{i \bar{j}}+\psi_{i \bar{j}}+v_{i \bar{j}})}{\det(g_{i \bar{j}}+\psi_{i \bar{j}})} (P_i)
\le {\limsup}  \frac{\det(g_{i \bar{j}}+\psi_{i \bar{j}}+\sigma_{i \bar{j}})}{\det(g_{i \bar{j}}+\psi_{i \bar{j}})} (P_i)\leq 1,$$
by \eqref{eq-max-CY}.
Here $(g_{i \bar{j}}+\psi_{i \bar{j}}+\sigma_{i \bar{j}}) = (g_{i \bar j} + u_{i \bar j} + b \rho_{i \bar j})$ is always positive.
Applying \eqref{eq-MA1}, 
$${\limsup}\ e^{(n+1)v+F_1 d^2} (P_i)\leq 1.$$
By \eqref{eq-sigma-Max}, 
$${\limsup} ((n+1)\sigma-(n+1)b\rho+F_1 d^2)(P_i)\leq 0.$$ 
Take $b$ large such that $-(n+1)b\rho+F_1 \rho^2>0$. Therefore, 
\begin{align*}
\sup_{\Omega_r} \sigma = \lim \sigma(P_i) \leq 0. 
\end{align*}
Thus $v\leq -b\rho$. We point out that if $\sigma$ has a maximum at $P \in \Omega_r$, then we take $P_i = P$ for all $i,$ so that the above argument turns to a classical maximum principle argument. 

For the other direction, we set $\sigma^+= v- b\rho$ and apply a similar method. There could be an issue that $g_{i \bar j} + \psi_{i\bar j} +\sigma^+_{i \bar j} <0$ if $b$ is too large. However,  according to the proof of Proposition 1.6 in \cite{ChengYau1980CPAM}, we have a sequence of points $\{P_i\}$ such that
\begin{align*}
(\sigma^+_{p \bar q}(P_i)) \geq - c(\sigma^+(P_i^*) - \inf_{\Omega_r} \sigma^+)(g_{p \bar q}(P_i)),
\end{align*}
for some constant $c>0$
and some $P_i^*$ in $\Omega_r$ satisfying that $\lim  \sigma^+(P^*_i) = \inf_{\Omega_r} \sigma^+$.  Thus when $i$ is large, 
$(g_{i \bar j} + \psi_{i\bar j} +\sigma^+_{i \bar j})(P_i) > 0$.

After getting $|v|\leq C|\rho|$, the rest is a standard maximum principle argument. In fact, we use test functions
\begin{align*}
M_{\pm} = \pm a d^{1+\epsilon},
\end{align*}
and apply the maximum principle to equation \eqref{eq-MA1} on $\Omega_r$, using Lemmas \ref{lem-Lu} and \ref{lem-MatrixEstm}, 
with $g$ replaced by $g_\psi$. Take $M_+$ for example.  \eqref{eq-equ-metric} holds for $u= M_+$ in $\Omega_r$ if
\begin{align}\label{eq-equiv-to-g}
a \in (0, \delta r^{-\epsilon-1}),
\end{align}
for some small $\delta>0$ which does not depend on $r$. 
 $v=M_+ = 0$ on $\partial \Omega$. And $v<M_+$ on $\{d=r\}$ if
\begin{align}\label{eq-a-b}
a> b(1-e^{-\lambda r})/r^{1+\epsilon}.
\end{align} 
When $r$ is sufficiently small, we can set $a= Ar^{-\epsilon}$ for some constant $A>0$ so that  \eqref{eq-equiv-to-g}, \eqref{eq-a-b}  hold. The choice of $A$ does not depend on $r$.
  When $0<d<r$,
\begin{align*}
&\frac{\det(g_{i \bar{j}}+\psi_{i \bar{j}}+(M_+)_{i \bar{j}})}{\det(g_{i \bar{j}}+\psi_{i \bar{j}})} -(n+1)M_+ - F_1d^2\\
&\leq g_{\psi}^{\bar k j} (M_+)_{j \bar k}-(n+1)M_+  + C|\partial \bar \partial M_+|_{g}^2 - F_1d^2\\
&\leq a\left((2+\epsilon)(\epsilon-n)d +C_1 d^2\right) + C_2 a^2 d^{2+2\epsilon}  - F_1d^2\\
&<0,
\end{align*}
if 
\begin{align*}
Ar^{-\epsilon}\left((2+\epsilon)(\epsilon-n)+C_1 d+C_2 (Ar^{-\epsilon}) d^{1+2\epsilon}\right) -F_1 d<0,
\end{align*}
which holds when $r$ is small and the choice of such $r$ only depends on $C_1, C_2, \delta$ and $A$. So we conclude that $v<M_+$ in $\Omega_r.$
\end{proof}

Assume $O \in \partial \Omega$. Take a small $\delta>0$ and assume that $\Gamma = B(O, \delta) \cap \partial \Omega$ is smooth. Denote $(g_t)_{i\bar j} = g_{i\bar j}+\psi_{i\bar j}+ tv_{i \bar j}$. 
In a domain  $B(P,\eta) \subseteq \Gamma \times \{0<d<\delta\},$
 \eqref{eq-MA1} can be written as
\begin{align}\label{eq-gt-v}
\int_0^1 g_t^{\bar j i} dt \cdot v_{i \bar j} -(n+1)v =F_1d^2.
\end{align}
Here $g_t$ is uniformly equivalent to $g$ and its inverse equals  \eqref{eq-gpsi-inverse}, if we denote $H_{i\bar j} = \rho_{i\bar j} - \rho (\psi_{i\bar j} + tv_{i \bar j})$. 
Applying \eqref{eq-C5a}, the standard theory of elliptic equations indicates that $v\in C^\infty_g.$ Applying $|v| = O(d^{1+\epsilon})$, the Schauder estimates imply that
\begin{align}\label{eq-v-Schauder}
\|v\|_{C^{2,\alpha}_g(B(P,\eta))} \leq C d(P)^{1+\epsilon}.
\end{align}
By Lemma \ref{lem-NormY}, it further implies that $C^{\alpha}_g(B(P,\eta))$ norms of terms listed in \eqref{eq-list-Y2v} are bounded by $Cd(P)^{1+\epsilon}$.

\section{Tangential Smoothness}\label{sec-TE}

In this section, we derive the estimates of $v$ along tangential directions. Assume that  an open portion $\Gamma$ of $\partial \Omega$ is smooth. Then there is a $R>0$ such that $d$ is smooth in
$\Gamma \times [0, R) := \{(y', d): y' \in \Gamma, 0\leq d< R\}$.
Under principal coordinates $(y', d), $ denote
\begin{align}\label{eq-B'}
B'(P,\eta) = \{(y', 0): (y', d) \in B(P,\eta) \text{ for some } d\}.
\end{align}

Given any $C^2$ function $w$, denote
\begin{align*}
\xi_w = (d^2 w_{d d}, d^2 Y_n^2 w, d^{\frac{3}{2}} Y_\beta w_{d}, d^{\frac{3}{2}}Y_\beta Y_n w, d^2 Y_\beta w_{d}, d^2 Y_\beta Y_n w,  d Y_{\beta}Y_{ \gamma} w,  d Y_l w),
\end{align*}
 where $\beta ,\gamma \in I_2$, and $l \in I.$ See \eqref{eq-C2Term}.

Notice
\begin{align*}
 \frac{\det((g_\psi)_{i\bar j} + v_{i \bar j})}{\det((g_\psi)_{i \bar j})} =   \det(I_n+g_\psi^{\bar k i} v_{i \bar j}) = 1+s,
\end{align*}
where $s$ is given by \eqref{eq-sigma-l}.

Let $g_\psi$ be defined as  \eqref{eq-g-psi} with $\psi = u_{(1)}.$
A similar argument as in the proof of Lemma \ref{lem-Lu} implies that
\begin{align*}
\sigma_1\left(\kappa (g_\psi^{\bar k i} v_{i \bar j})\right) &= d^2  (1+\mathbf{O}(d)) (v_{dd}+ Y_n^2 v)-((n-1)d+  \mathbf{O}(d^2))v_d\\
&\qquad+ d^2 \sum_{\beta \in I_2} C_{ \beta d} Y_\alpha v_d +d^2 \sum_{\beta\in I_2} C_{\beta n} Y_\beta Y_n v
+d \sum_{ \beta,\gamma  \in I_2} C_{\beta \gamma} Y_\beta Y_\gamma v\\
&\qquad +d\sum_{\beta \in I_1} C_{\beta} Y_\beta v.
\end{align*}
According to the proof of Lemma \ref{lem-MatrixEstm}, $\sigma_l \left(\kappa \left(g_\psi^{\bar k i} v_{i \bar j}\right)\right)$ is a polynomial of degree $l$ in $\xi_v$
 with  coefficients smooth in $z$. We denote
\begin{align}\label{eq-F2}
F_2 = \log (1+s) -\sigma_1\left(\kappa \left(g_\psi^{\bar k i} v_{i \bar j}\right)\right).
\end{align}
Then \eqref{eq-MA1} can be written as
\begin{align}\begin{split}\label{eq-MainEqUnderFrame-v}
&d^2  (1+\mathbf{O}(d)) (v_{dd}+ Y_n^2 v)-((n-1)d+  \mathbf{O}(d^2))v_d\\
&\qquad+ d^2 \sum_{\beta \in I_2} a_{ \beta d} Y_\beta v_d +d^2 \sum_{\beta\in I_2} a_{n \beta} Y_\beta Y_n v
+d \sum_{ \beta,\gamma  \in I_2} a_{\beta \gamma} Y_\beta Y_\gamma v\\
&\qquad +d\sum_{\beta \in I_1} b_{\beta} Y_\beta v - (n+1)v= F_1 d^2+F_2,
\end{split}
\end{align}
where $a_{ij}, b_{i}$'s are smooth functions of $z$, and $F_2$ is given by \eqref{eq-F2}.

\begin{lemma}\label{lem-Yi-v}
Assume that  an open portion $\Gamma$ of $\partial \Omega$ is smooth. 
Then for any open subset  $\Gamma'$ of $\Gamma$ such that $\bar \Gamma' \subseteq \Gamma$, and  any $B(P, \eta)) \subseteq \Gamma' \times (0, R),$
\begin{align}\label{eq-Yi-v}
\|Y_\alpha v\|_{C^{2, \alpha}_g (B(P, \eta))}\leq C d(P)^{1+\epsilon},
\end{align}
if  $Y_\alpha \neq Y_{2n}$. Here $C$ is independent the choice of $P.$
\end{lemma}
\begin{proof}
According to the assumption,  $\rho = e^{-\lambda d}-1$ is smooth in $\Gamma\times [0, R)$. So is $F$ in \eqref{eq-F}. We apply  higher order derivative estimates to \eqref{eq-gt-v} to derive that, for any $k\in \mathbb{N}$,
\begin{align}\label{eq-v-higher}
\|v\|_{C^{k,\alpha}_g(B(P,\eta))} \leq C d(P)^{1+\epsilon}.
\end{align}
So for any fixed small positive $r$, $v$ is smooth in $\Gamma \times [r, R)$. We only only to prove the lemma when $d\in (0,r)$.

Let $w = Y_\alpha v.$ First we assume that $Y_\alpha$ commutes with $Y_i$ for all $i=1,\cdots, 2n.$
We compute $Y_\alpha F_2$ as
\begin{align}\label{eq-Ya-F2}
Y_\alpha \left(   \log (1+s) -g_\psi^{\bar j i}v_{i \bar j}\right)=\frac{Y_\alpha \left( s -g_\psi^{\bar j i}v_{i \bar j}\right) - s Y_\alpha \left( g_\psi^{\bar j i}v_{i \bar j}\right) }{1+s},
\end{align}
where $s -g_\psi^{\bar j i}u_{i \bar j} = \sum_{l=2}^n\sigma_l \left(\kappa (g_\psi^{\bar k i} v_{i \bar j})\right).$ 
According to the proof of Lemmas \ref{lem-Lu}, \ref{lem-MatrixEstm}, 
\begin{align*}
Y_\alpha \left( s -g_\psi^{\bar j i}v_{i \bar j}\right) - s Y_\alpha \left( g_\psi^{\bar j i}v_{i \bar j}\right) = F_3(x, \xi_v) + F_4(x, \xi_v) \cdot \xi_w,
\end{align*}
where $F_3, F_4$ are both polynomials in $\xi_v$ with smooth coefficients in $x$. In addition, $F_3$ starts with degree 2 in  $\xi_v$ and  $F_4$ starts with degree 1 in $\xi_v$. We apply $Y_\alpha$ to \eqref{eq-MainEqUnderFrame-v},  multiply the resulting equation by $1+s$ and keep only terms containing $w$ to the left-hand side to derive
\begin{align}\begin{split}\label{eq-MainEqUnderFrame-w}
&d^2  (a_{2n, 2n}w_{dd}+a_{n, 2n}Y_n w_d+ a_{nn}Y_n^2 w)+d \sum_{ \beta,\gamma  \in I_2} a_{\beta \gamma}  Y_\beta Y_\gamma w\\
&\qquad +  \sum_{\beta \in I_2}(d^{\frac{3}{2}} a_{\beta, 2n} + d^2 c_{\beta, 2n}) Y_\beta  w_d +  \sum_{\beta \in I_2}(d^{\frac{3}{2}} a_{\beta n} + d^2 c_{\beta n})  Y_\beta Y_n w \\
&\qquad +d\sum_{i \in I} b_{\beta} Y_i w - (n+1)w= \tilde F d^2 +\tilde G,
\end{split}
\end{align}
where $a_{ij}, b_i, c_{ij}$'s are  polynomials in $d, \xi_v$ and $\tilde F, \tilde G$ are  polynomials in $ \xi_v$, all with coefficients smooth in $z$. In addition, when $(d, \xi_v) = (0, 0)$,
\begin{align}\begin{split}\label{eq-aij-bi-v}
a_{2n, 2n}(z, 0, 0) &= 1, \qquad a_{nn}(z, 0, 0) =1,\\
a_{\beta, 2n}(z, 0, 0) & = 0, \qquad a_{\beta n}(z, 0, 0)  = 0\\
b_n(z, 0, 0) & = -(n-1),
\end{split}
\end{align}
and $(a_{\beta \gamma}(z,0,0) )_{\beta, \gamma \in I_2}$ is uniformly positive definite. Futhermore, $\tilde G= \sum_{|\alpha|=2}^n C_\alpha (z) \xi_v^\alpha$.

We rewrite \eqref{eq-MainEqUnderFrame-w} as
\begin{align}\label{eq-Lv-FG}
L_v w = \tilde F d^2 +\tilde G.
\end{align}
\eqref{eq-v-Schauder} and Lemma \ref{lem-NormY} implies that $C^\alpha_g$ norms of $s, F_4$ and $\tilde Fd^2+\tilde G$ in $B(P, \eta)$ are bounded by $Cd^{1+\epsilon}$.

Let $\delta>0$ be a  small number. We consider any $P \in \Gamma \times (0, r)$ such that \begin{align}\label{eq-cond-P}
 |y'(P) - y' (\partial \Gamma)|>\delta.
\end{align}  
Then $P$ is away from $\partial \Gamma \times (0,r).$
We apply the maximum principle to \eqref{eq-Lv-FG} with test functions
\begin{align}\label{eq-test-v}
M_\pm(y', d) = \pm  d^\epsilon(ad + b|y'- y'(P)|^2),
\end{align}
in the cylinder
\begin{align}\label{eq-G-P-delta}
G_{P, \delta, \delta^2} = \{(y', d) : |y' - y'(P)| <\delta, 0<d<\delta^2\}
\end{align}
to derive that
$
|w| \leq M_+.
$
In fact,
\eqref{eq-v-Schauder} implies  that $|Y_\alpha v| \leq Cd^\epsilon$ for $\alpha \in I_1$. Hence
 at $\partial G_{P, \delta, \delta^2},$ if $a, b > C\delta^{-2}$,
$
|Y_\alpha v| \leq M_+.
$
And in  $ G_{P, \delta, \delta^2},$
\begin{align}\begin{split}\label{eq-MaxP}
&L_v M_+ - \tilde F d^2 -\tilde G\\
&\leq a(\epsilon+2)(\epsilon-n)d^{1+\epsilon} +b (\epsilon+1)(\epsilon-n-1)d^{\epsilon}|y'- y'(P)|^2\\
&\qquad+ C_1 bd^{1+\epsilon} + C_2 a d^{2+\epsilon}+ C_3 d^{1+\epsilon} \\
&<0,
\end{split}
\end{align}
if $a>>b>>C_3$ and $r$ is small.
Then we derive that $|w| \leq M_+$ in $G_{P, \delta, \delta^2}.$
When $y' = y'(P)$, 
\begin{align}\label{eq-w-max}
|w| \leq a d^{1+\epsilon}.
\end{align}
Take another open subset $\Gamma'' $ of $\Gamma$ such that
\begin{align*}
 \bar\Gamma' \subseteq \Gamma'', \quad \bar\Gamma'' \subseteq \Gamma.
\end{align*}
Since the choice of $P$ is arbitrary as long as \eqref{eq-cond-P} holds, \eqref{eq-w-max} holds in $\Gamma'' \times [0, R]$ if $\delta$ is small enough.

Using \eqref{eq-B'}, we consider any $P\in \Gamma \times [0, r]$ such that
$
 B'(P, \eta) \subseteq \Gamma''.
$
 According to Lemmas \ref{lem-smooth-dP}, \ref{lem-Yi-xj} and  \eqref{eq-z-theta},  $L_v$ equals $\Delta_{g_\psi}$ plus perturbation  terms that can be written as
\begin{align*}
\tilde a_{i j}(\vartheta) \frac{\partial^2}{\partial \vartheta_i \partial \vartheta_j} +\tilde b_i(\vartheta) \frac{\partial}{\partial \vartheta_i },
\end{align*}
where $C^\alpha_g(B(P,\eta))$ norms of $\tilde a_{ij}, \tilde b_i$'s are bounded by $Cd$.
Then \eqref{eq-w-max} and the Schauder estimates applied to \eqref{eq-Lv-FG} under the coordinates $\vartheta$ in $B(P, \eta)$ imply that, \begin{align}\label{eq-Schauder-w}
&\|w\|_{C^{2, \alpha}_g (B(P,\eta))}\leq C d(P)^{1+\epsilon}.
\end{align}

Secondly, we allow that $Y_\alpha$ does not commute with some other $Y_i$'s. 
We need to control the terms generated by $Y_\alpha \xi_v - \xi_w$ that are not in $\xi_v$.

For the case $Y_\alpha = Y_n$, by Lemma \ref{lem-Lie-Y}, when $\beta, \gamma\in I_2,$
\begin{align*}
Y_n (d Y_\beta Y_\gamma v) =  d Y_\beta  Y_n Y_\gamma  v  + d \sum_{\theta \in I_1} f_\theta Y_\theta Y_\gamma v.
\end{align*}
When $\theta=n$, the term $d Y_n Y_\gamma v$ on the right-hand side is not contained in $\xi_v.$  However,
\begin{align*}
Y_n (d Y_\beta Y_\gamma v) &=dY_\beta  Y_\gamma w+d \sum_{\theta \in I_1} Y_\beta (g_\theta Y_\theta v)
+ d\sum_{\theta \in I_1}  f_\theta Y_\theta Y_\gamma v\\
&=dY_\beta  Y_\gamma w+d \sum_{\theta \in I_2} Y_\beta (g_\theta Y_\theta v) + d Y_\beta (g_n w)
\\
&\qquad + d\sum_{\theta \in I_2}  f_\theta Y_\theta Y_\gamma v
+ d  f_n Y_n Y_\gamma v\\
&=dY_\beta  Y_\gamma w+d \sum_{\theta \in I_2} Y_\beta (g_\theta Y_\theta v)
+dg_n Y_\beta w + d (Y_\beta g_n) Y_n v \\
&\qquad + d\sum_{\theta \in I_2}  f_\theta Y_\theta Y_\gamma v + df_n Y_\gamma w + d\sum_{\theta \in I_2}  (f_n h_\theta )Y_\theta v,
\end{align*}
where $d Y_\beta  Y_\gamma w, d g_n Y_\beta  w, df_n Y_\gamma w$ 
are linear in $\xi_w$ and the rest are linear in $\xi_v$. Similarly, by Lemma \ref{lem-Lie-Y}, when $\beta 
\in I_2,$
\begin{align*}
Y_n(d^\frac{3}{2} Y_\beta Y_n v) & = d^\frac{3}{2} Y_n Y_\beta w\\
&=  d^\frac{3}{2} Y_\beta Y_n  w + d^\frac{3}{2}  \sum_{\theta \in I_2} f_\theta Y_\theta w + d^\frac{3}{2} f_n Y_n w,\\
Y_n(d^\frac{3}{2} Y_\beta  v_d)&=d^\frac{3}{2} Y_\beta  Y_n v_d +d^\frac{3}{2} \sum_{\theta \in I_2} f_\theta Y_\theta v_d +d^\frac{3}{2} f_n Y_n v_d\\
&=d^\frac{3}{2} Y_\beta  w_d +d^\frac{3}{2} Y_\beta \sum_{\theta \in I_1} f_\theta Y_\theta v
 +d^\frac{3}{2} \sum_{\theta \in I_2} f_\theta Y_\theta v_d \\
 &\qquad+d^\frac{3}{2} f_n  w_d + d^\frac{3}{2} f_n  \sum_{\theta\in I_1} g_\theta Y_\theta v.
\end{align*}
All terms on the right-hand side are  linear in $\xi_w$ or $\xi_v$ but may have extra $d^\frac{1}{2}$ factors. We point out that the extra $d^{\frac{1}{2}}$ factors can be absorbed by other terms in the equation.

We apply $Y_n$ to \eqref{eq-MainEqUnderFrame-v}, switch orders of $Y_i$'s if necessary, multiply both hand sides by $1+s$ and keep only terms containing $w$ to the left-hand side to derive an equation of form \eqref{eq-MainEqUnderFrame-w}. Schauder estimates imply \eqref{eq-Schauder-w}.

For the case $Y_\alpha \neq Y_n$, again we have to take care of exchanging orders of vector fields. For example,
\begin{align*}
Y_\alpha (d Y_\beta Y_\gamma v) &=d Y_\beta Y_\alpha Y_\gamma v + \sum_{\theta \in I_2} df_\theta Y_\theta Y_\gamma v + df_n Y_n Y_\gamma v\\
&=d Y_\beta Y_\gamma w + d Y_\beta \sum_{\theta\in I_2} f_\theta Y_\theta v + d Y_\beta ( f_n Y_n v )\\
&\qquad + \sum_{\theta \in I_2} df_\theta Y_\theta Y_\gamma v + df_n  Y_\gamma Y_n  v + df_n \sum_{\theta\in I_1} g_\theta Y_\theta v,
\end{align*}
where all terms but 
\begin{align}\label{eq-Yn-v-1}
d Y_\beta ( f_n Y_n v ), \quad df_n  Y_\gamma Y_n  v
\end{align}
 are linear in $\xi_v, \xi_w$. Notice that we already derived $C^{2, \alpha}_g$ estimates of $Y_n v$, which implies that $C^{ \alpha}_g(B(P,\eta))$ norms of \eqref{eq-Yn-v-1} are bounded by $Cd(P)^{1+\epsilon}.$  Then we conclude the lemma by a similar argument as it for the case $Y_\alpha =Y_n$.
\end{proof}

For some  vector fields $Y_{\alpha_l}$'s, where $l=1, \cdots, p$ and $\alpha_l \in I_1$, denote  
\begin{align}\label{eq-wp}
D^p_{Y'} v=Y_{\alpha_1}\cdots Y_{\alpha_p} v,
\end{align}
which is one of the $p$-th tangential derivative of $v.$ 

\begin{lemma}\label{lemma-tangential-estimates} Suppose the assumption of Lemma \ref{lem-Yi-v} holds. Then for  any $p$-th tangential derivative of $v$ in the form of \eqref{eq-wp},
\begin{align}\label{eq-TanEstm-wp}
&\|D^p_{Y'} v\|_{C^{2, \alpha}_g (B(P, \eta))}\leq C_p d(x)^{1+\epsilon},
\end{align}
where $C_p$ is independent the choice of $P.$
\end{lemma}

\begin{proof} We prove by  induction on $p$. Case $p=1$ follows from Lemma \ref{lem-Yi-v}. Assume that case $l$ for  $l=1,\cdots, p-1$ is correct. We prove case $p$ using a similar method.

First we assume that $Y_{\alpha_1}, \cdots, Y_{\alpha_p}$ in \eqref{eq-wp} commute with $Y_i$ for $i \in I$. Denote $w = Y_{\alpha_p} v$. Then it satisfies \eqref{eq-MainEqUnderFrame-w}. Applying $Y_{\alpha_1}\cdots Y_{\alpha_{p-1}}$ to \eqref{eq-MainEqUnderFrame-w}, we derive an equation of $D^p_{Y'} v,$
\begin{align}\begin{split}\label{eq-MainEqUnderFrame-wp}
&d^2  (a_{2n, 2n}D^p_{Y'} v_{dd}+a_{n, 2n}Y_n D^p_{Y'} v_d+ a_{nn}Y_n^2 D^p_{Y'} v)+d \sum_{ \beta,\gamma  \in I_2} a_{\beta \gamma}  Y_\beta Y_\gamma D^p_{Y'} v\\
&\qquad +  \sum_{\beta \in I_2}(d^{\frac{3}{2}} a_{\beta, 2n} + d^2 c_{\beta, 2n}) Y_\beta  D^p_{Y'} v_d +  \sum_{\beta \in I_2}(d^{\frac{3}{2}} a_{\beta n} + d^2 c_{\beta n})  Y_\beta Y_n D^p_{Y'} v \\
&\qquad +d\sum_{i \in I} b_{\beta} Y_i D^p_{Y'} v - (n+1)D^p_{Y'} v= F_p d^2 + G_p,
\end{split}
\end{align}
where $a_{ij}, b_i, c_{ij}$'s are  polynomials in $d, \xi_v $ with \eqref{eq-aij-bi-v} holds and $F_p, G_p$ are  polynomials in $ \xi_v, \cdots, \xi_{D^{p-1}_{Y'} v}$, all with coefficients smooth in $z$.

We rewrite \eqref{eq-MainEqUnderFrame-wp} as
\begin{align}\label{eq-Lv-FG-p}
L_v (D^p_{Y'} v) = F_p d^2 +G_p.
\end{align}
By the induction, $C^\alpha_g$  norms of $F_p, G_p$ are bounded by $Cd^{1+\epsilon}.$ We apply the maximum principle with test functions \eqref{eq-test-v} and Schauder estimates to derive \eqref{eq-TanEstm-wp}.

Secondly, we allow that $Y_{\alpha_l}$'s in \eqref{eq-wp} do not commute with other $Y_i$'s. We apply the same method and derive \eqref{eq-Lv-FG-p} with additional terms generated by $Y_{\alpha_1\cdots \alpha_p} \xi_v - \xi_{D^p_{Y'} v}.$ 
We need to control the additional terms  that are not  in $\xi_v, \cdots, \xi_{D^{p-1}_{Y'} v}$.

For the case $D^p_{Y'} v = Y_n^p v$, by Lemma \ref{lem-Lie-Y}, when $\beta, \gamma\in I_2,$
$
Y_n^p (d Y_\beta Y_\gamma v) =  d Y_\beta   Y_\gamma  D^p_{Y'} v$ plus $
2p$ terms. For $i, j\in I$, assume that $[Y_i, Y_j] = S^k_{i j} Y_k$. The summation of $k$ is from $1$ to $2n-1$, since
$S_{ij}^{2n}=0$  according to Lemma \ref{lem-Lie-Y}.
Taking switching orders from
$d {Y^p_n}   Y_\beta Y_\gamma  v$ 
to $dY_\beta {Y^p_n}    Y_\gamma  v$, $\beta, \gamma\in I_2$, for example, 
it generates terms of the form, for $p_1=1,\cdots, p$,
\begin{align}\label{eq-Switch-v}
d {Y^{p_1-1}_n} (S_{n\beta}^m  \cdot Y_m {Y_n^{p-p_1}} Y_\gamma v),
\end{align}
which  is actually generated from $d {Y^{p_1}_n}   Y_\beta {Y^{p-p_1}_n} Y_\gamma v$ 
to $d {Y^{p_1-1}_n} Y_\beta {Y^{p-p_1+1}_n} Y_\gamma v$.
Similarly, from
$d Y_\beta  {Y^p_n}   Y_\gamma  v$ 
to $dY_\beta  Y_\gamma {Y^p_n}     v$, 
we generate terms of the form, 
\begin{align}\label{eq-Switch1-v}
d Y_\beta {Y^{p_1-1}_n} (S_{n \gamma}^m  \cdot Y_m {Y^{p-p_1}_n} v).
\end{align}
We apply the prodcut rule to \eqref{eq-Switch-v}, \eqref{eq-Switch1-v}. Notice that the $C^\alpha_g$ norms of $p$-th tangential derivatives of $v$ in the form of $ d Y_{\beta_1}\cdots Y_{\beta_p} v =dY_{\beta_1}D^{p-1}_{Y'} v$ are all bounded by $Cd^{1+\epsilon}$  due to the induction. The $(p+1)$-th derivatives of $v$ derived from \eqref{eq-Switch-v}, \eqref{eq-Switch1-v} are
\begin{align*}
d {Y^{p_1-1}_n}  Y_m {Y_n^{p-p_1}} Y_\gamma v, \quad d Y_\beta {Y^{p_1-1}_n}    Y_m {Y^{p-p_1}_n} v,
\end{align*}
which can be transformed to
\begin{align}\label{eq-p+1-v}
d Y_\gamma Y_m   {Y^{p-1}_n}   v, \quad d Y_\beta  Y_m {Y^{p-1}_n}    v,
\end{align}
plus a few $p$-th derivatives of $v$ by switching orders of $Y_i$'s. In the case $m\neq n$, \eqref{eq-p+1-v} is in the form of  $\xi_{D^{p-1}_{Y'} v}.$ In the case $m=n,$ \eqref{eq-p+1-v} are  first derivatives of $D^p_{Y'} v$, which are in $\xi_{D^p_{Y'} v}$, so we  move those terms to the left-hand side of the equation as main terms.
Similarly, we can deal with $Y_n^p(d^\frac{3}{2}Y_\beta Y_n v), Y_n^p (d^\frac{3}{2} Y_\beta v_d).$
 In sum, $D^p_{Y'} v$ satisfies an equation of form \eqref{eq-Lv-FG-p} and then \eqref{eq-TanEstm-wp} holds.

Now we induct on the number of $Y_n$'s in \eqref{eq-wp}. Assume that if $D^p_{Y'} v$ has strictly more than $k$ $Y_n$'s in its form \eqref{eq-wp}, then \eqref{eq-TanEstm-wp} holds. For case $k$, 
first we write $D^p_{Y'} v$ as
$
Y_n^k  Y_{\alpha_1}\cdots Y_{\alpha_{p-k}}v 
$
plus $(p-1)$-th tangential derivatives of $v$, where the latter has desired $C^{2, \alpha}_g$ estimates due to the induction. Again we consider switching the orders of $Y_i$'s in
\begin{align*}
Y_n^k  Y_{\alpha_1}\cdots Y_{\alpha_{p-k}} (dY_\beta Y_\gamma v)
\end{align*}
to derive $dY_\beta Y_\gamma D^p_{Y'} v.$ 
Each generated $(p+1)$-th derivative in $v$ either has more than $k$ $Y_n$'s or has at least two $Y_i$'s in its form, where $i \in I_2$. In the first case, since it also has $Y_\beta$ or $Y_\gamma$ remaining in its form,  we  switch it to $dY_\beta \tilde D^p_{Y'} v$ or $dY_\gamma \tilde D^p_{Y'} v$ for some $\tilde D^p_{Y'} v$   that has strictly more than $k$  $Y_n$'s in its form \eqref{eq-wp}. In the second case, we switch it to $dY_{\beta_1}Y_{\beta_2} D^{p-1}_{Y'} v$ where $\beta_1, \beta_2\in I_2.$ In both cases, the $(p+1)$-th derivative in $v$ has desired $C^\alpha_g$ estimate. In sum, $D^p_{Y'} v$ satisfies an equation of form
\begin{align*}
L_v (D^p_{Y'} v) = F_p d^2 +\tilde G_p,
\end{align*}
where the $C^\alpha_g(B(P,\eta))$ norm of $
\tilde G_p$ is bounded by $Cd^{1+\epsilon}$. The maximum principle and Schauder estimates imply \eqref{eq-TanEstm-wp}.
\end{proof}

\section{Proof of Theorem \ref{thm-Main1}}\label{sec-thm-1}
 
We introduce a  new variable $t$ replacing $d$ which satisfies that
$d=\frac{1}{2} t^2.$ Lemma \ref{lem-smooth-dP} implies that $\frac{t}{\sqrt{d(P)}}$ is smooth in $B(P,\eta)$ with uniform $C^{k,\alpha}_g$ estimates. Applying
\begin{align*}
\frac{\partial}{\partial d}= t^{-1}\frac{\partial}{\partial t},\quad
\frac{\partial^2}{\partial d^2}= t^{-2}\frac{\partial^2}{\partial t^2} - t^{-3}\frac{\partial}{\partial t},
\end{align*}
and \eqref{eq-TanEstm-wp}, we derive that if $p\in \mathbb{N}$, and $B(P,\eta)\subseteq \Gamma' \times [0,R)$, then
\begin{align}\label{eq-TanEstm3}
\| D^p_{Y'}v_{tt}\|_{C^\alpha_g (B(P, \eta))}, 
\|t^{-1}D^p_{Y'}v_{t}\|_{C^\alpha_g (B(P, \eta))}, 
\|t^{-2}{D^p_{Y'}v}\|_{C^\alpha_g(B(P, \eta))}\leq C_p d(P)^{\epsilon}.
\end{align}
According to Lemma \ref{lem-w-Ca}, for a smaller  $\Gamma'$, and for probably a smaller $\alpha$,
\begin{align}\label{eq-Euc-wp}
\|D^p_{Y'}v_{tt}\|_{C^\alpha (\Gamma'\times [0, R))},
\|t^{-1} {D^p_{Y'}v_{t}}\|_{C^\alpha (\Gamma'\times [0, R))}, 
\|t^{-2}{D^p_{Y'}v}\|_{C^\alpha(\Gamma'\times [0, R))}\leq C_p,
\end{align}
where the $C^\alpha$ norm is taken with respect to the Euclidean metric on $\mathbb{C}^n.$

\begin{definition}
We say that a function $w$ has an expansion of order $t^k$ in $\Gamma'\times [0, R)$, if there is some $\epsilon>0$ such that  in $\Gamma'\times [0, R)$,
\begin{align*}
w = \sum_{i=2}^k \sum_{j=0}^{N_i} c_{i, j}(y') t^i (\log t)^j + R_k,
\end{align*}
where $N_i$'s are some integers depending on $i, n$, $c_{i, j}$'s are smooth functions in $y'$, and $R_k=O(t^{k+\epsilon})$. In addition, for $p \in \mathbb{N}$, $R_k$ satisfies
\begin{align*}
\|D^p_{Y'} R_k\|_{C^{\alpha} (\Gamma'\times [0, R))}\leq C_{p,k} t^{k},
\end{align*}
where $C_{p,k}$ is some constant.
\end{definition}

Then \eqref{eq-TanEstm3}, \eqref{eq-Euc-wp} imply that $v$ has an expansion of order $t^2$ with $c_{2, j}=0$ for any $j$.

Keeping only $d^2 v_{dd} -(n-1) dv_d-(n+1) v$ on the left-hand side, we rewrite  \eqref{eq-MainEqUnderFrame-v} as, 
\begin{align*}
t^2v_{tt} - (2n-1) tv_t -(4n+4) v = F_1 t^4+ \bar F.
\end{align*}
Taking derivative in $t$ and multiplied by $1+s$, we derive
\begin{align}\label{eq-v-ttt-s}
(1+s)\left(t^2 v_{ttt} - (2n-3) tv_{tt} -(6n+3) v_t \right)=(1+s)F_1 t^4+(1+s)\bar F_t,
\end{align}
where
we compute $\bar F_t$ in a similar way as in  \eqref{eq-Ya-F2} with $Y_\alpha$ replaced by $\frac{\partial}{\partial d}$, and derive that $(1+s)\bar F_t$ is a polynomial in
$\xi_v, (\xi_v)_t$
with coefficients smooth in $y', t.$ In addition, every monomial containing $(\xi_v)_t$, if
it is not
\begin{align*}
(d^2 Y_n^2 v)_t,\quad
(d C_{\beta \gamma}Y_\beta Y_\gamma v)_t, \quad
(d C_{\alpha}Y_\alpha v)_t,
\end{align*}
for some $\beta, \gamma \in I_2, \alpha\in I_1,$   has a $\xi_v$ or $t$ factor. 
In particular,  monomials containing
\begin{align*}
(d^2 v_{dd})_t = \frac{1}{4} t^2 v_{ttt} + \frac{1}{4}(tv_{tt} -v_t)
\end{align*}
 have $\xi_v$ or $t$ factors. Combine all  monomials containing $t^2 v_{ttt}$ into a single term in the form of $H(y', t, \xi_v) \cdot t^2 v_{ttt}$, move it the left-hand of \eqref{eq-v-ttt-s},  divide the equation by $1+s - H$ and simplify it as
\begin{align}\label{eq-v-ODE}
t^2 v_{ttt}-(2n-3) t v_{tt} -(6n+3) v_t =t G,
\end{align}
where  $G$ is smooth in
$$y', t, t v_{tt}, v_t,    D_{Y'} v, D^2_{Y'}v,  tD_{Y'}v_t, tD^2_{Y'}v_t, t^2 D_{Y'}v_{tt}.$$
Then, we can follow the quasilinear case as in \cite{HanJiang} to show that $v$ has an expansion of order $k$ for any $k \in \mathbb{N}$. See also \cite{JiangXiao}. In fact, we iterate the following two lemmas, starting with $k=2.$

\begin{lemma}\label{lem-iterate-1}
If $v$ has an expansion of order $t^k$ in $\Gamma'\times [0, R)$, then $tG$ has an expansion of order $t^{k}$ in $\Gamma'\times [0, R)$.
\end{lemma}
\begin{lemma}\label{lem-iterate-2}
If  $tG$ has an expansion of order $t^k$ in $\Gamma'\times [0, R)$, then $v$ has an expansion of order $t^{k+1}$ in $\Gamma'\times [0, R)$.
\end{lemma}
Proof of Lemma \ref{lem-iterate-1} is by formal computation. Proof of Lemma \ref{lem-iterate-2} is by applying the ODE solution 
\begin{align*}
v_t &= \left(v_t(R)t^{-2n-1} + \frac{R^{-2n-4}}{2n+4} \int_0^{R}t^3 G dt\right)t^{2n+1}\\
&\qquad- \frac{ t^{-3}}{2n+4} \int_0^{t} t^3 G dt -\frac{t^{2n+1}}{2n+4}\int_t^R t^{-2n-1} G dt
\end{align*}
of \eqref{eq-v-ODE}. 

Next, we proof Theorem \ref{thm-Main1}.
\begin{proof}
 By iterating Lemmas \ref{lem-iterate-1}, \ref{lem-iterate-2}, we obtain 
smooth functions $c_2, \cdots, c_{n}$ and $c_{i,j}$ for $i\ge 2n+2$ and $0\le j\le N_i$ in $\Gamma'$ such that 
\begin{align}\label{eq-expression-any-ell}
v(y', t)=\sum_{i=2}^{n}c_i(y') t^i
+\sum_{i=2n+2}^{\ell} \sum_{j=0}^{N_i} c_{i, j}(y')t^{i}(\log t)^j
+R_{\ell}(y',t),\end{align}
where $R_\ell=O(t^{\ell+\epsilon})$ is $C^{\ell,\alpha}$ in $\Gamma' \times [0, r)$, for any $r\in (0,R)$. 
By formal computation  of \eqref{eq-v-ODE}, 
\begin{align}\label{eq-Ni}
N_i = \left\lfloor \frac{i}{2n+2}\right\rfloor.
\end{align}
For unification, we write $c_{i,0}=c_{i}$ for $2\le i\le n$. Consider a formal expansion 
\begin{align*}
v_*=\sum_{i=1}^\infty \sum_{j=0}^{N_i}c_{i,j}t^i(\log t)^j, 
\end{align*}
where we set $c_{i,j}=0$ when such term is absent in the expansion of $v$. 
By exchanging the order of summations, we get 
\begin{align*}
v_*=\sum_{j=0}^\infty \sum_{i=j(2n+2)}^{\infty}c_{i,j}t^i(\log t)^j.
\end{align*}
For each fixed pair $i$ and  $j$, introduce $k$ such that 
\begin{align}\label{eq-relation-ijk} 
i=j(2n+2)+k.\end{align} 
Throughout the proof, any $i,j,k$ are related by \eqref{eq-relation-ijk}. 
Then, 
\begin{align*}
v_*=\sum_{j=0}^\infty \sum_{k=0}^{\infty}c_{j(2n+2)+k,j}t^{j(2n+2)+k}(\log t)^j
=\sum_{j=0}^\infty \sum_{k=0}^{\infty}c_{j(2n+2)+k,j}t^k(t^{2n+2}\log t)^j.
\end{align*}
We now write
\begin{align}\label{eq-Form-Series}
v_*=\sum_{l=1}^\infty\sum_{k+j=l}c_{j(2n+2)+k,j}t^k(t^{2n+2}\log t)^j.
\end{align}
We emphasize that \eqref{eq-Form-Series} is a formal expansion. The series in the right-hand side may not converge. 
Following \cite{HanWang}, we now introduce a sequence of cutoff functions to construct a convergent series. 

Set 
\begin{align}\label{eq-definition-TS} 
T =t,\quad S =-t^{2n+2}\log t.
\end{align} 
For simplicity, we assume $R\in (0,1)$. For each $l\ge 1$, set 
\begin{align}\label{eq-definition-w-l} 
w_l(y',T,S)=\sum_{k+j=l}(-1)^jc_{j(2n+2)+k,j}(y')T^kS^j.
\end{align}
This is a homogeneous polynomial in $T$ and $S$ of degree $l$, with coefficients given by smooth functions of $y'$. 
Take a cutoff function $\eta\in C_0^\infty(-R,R)$ with $\eta=1$ on $(-R/2,R/2)$. 
For any constant $\lambda_l\ge 1$, we observe that
$\eta(\lambda_l {T}) =0$ if $|{T} |\geq {1}/{\lambda_l}$ and $\eta(\lambda_l {T}) =1$ if  $|{T} |\leq {1}/{(2\lambda_l)}$. 
A similar result holds for $\eta(\lambda_l {S})$. 
It is straightforward to verify that, for any $\alpha \in \mathbb{Z}_+^{2n+2}$ with $|\alpha|<l$, 
\begin{align*}
\big| \partial^\alpha_{y',T,S} \big[w_l(y', T, S) \eta(\lambda_l{T})\eta(\lambda_l{S})\big] \big| 
\leq \frac{C_{l, \alpha}}{\lambda_l^{l - |\alpha|}},
\end{align*}
for some positive $C_{l, \alpha}$ depending only on $w_l$ and $\alpha$. 
By choosing $\lambda_l$ sufficiently large, we get 
\begin{align}\label{eq-decay-w}
\|w_l(y', T, S) \eta(\lambda_l{T})\eta(\lambda_l{S})\|_{C^{l-1}(B_R\times I\times I)} 
\leq 2^{-l},
\end{align}
where $I=(-R, R)$. Next, set 
$$w(y',T,S)=\sum_{l=1}^\infty w_l(y', T, S)\eta(\lambda_l{T})\eta(\lambda_l{S}).$$
We note that each term in the summation is smooth in $y', T$, and $S$. 
By \eqref{eq-decay-w}, we obtain, for any $m\in\mathbb N$, 
\begin{align*}
\sum_{l=m+1}^\infty\|w_l(y', T, S) \eta(\lambda_l{T})\eta(\lambda_l{S})\|_{C^{m}(B_R\times I\times I)} 
\leq \sum_{l=m+1}^\infty 2^{-l}=2^{-m}.
\end{align*}
Hence, $w$ is $C^m$ in $y', T$, and $S$. This is true for any $m\in\mathbb N$. 
Therefore, $w$ is smooth in $y', T$, and $S$.

Next, with \eqref{eq-definition-TS}, we have 
\begin{align}\label{eq-expression-w}
w(y',t,-t^{2n+2}\log t)=\sum_{l=1}^\infty w_l(y', t, -t^{2n+2}\log t)
\eta(\lambda_l{t})\eta(-\lambda_l{t^{2n+2}\log t}).\end{align}
For each $l\ge 1$, we set 
\begin{equation}\label{eq-definition-w-tilde-l1}\widetilde w_l(y',t)=w_l(y', t, -t^{2n+2}\log t).\end{equation}
By \eqref{eq-definition-w-l}, we have 
\begin{equation}\label{eq-definition-w-tilde-l2}
\widetilde w_l(y',t)=\sum_{k+j=l}c_{j(2n+2)+k,j}t^{j(2n+2)+k}(\log t)^j.
\end{equation}
With $k+j=l$, we have $j(2n+2)+k=l+jn$, and hence
$$\widetilde w_l(y',t)=\sum_{k+j=l}c_{l+jn,j}t^{l+jn}(\log t)^j.$$
 Then, the lowest power of $t$ is $l$, given by $j=0$. 
Therefore, by taking $\lambda_l$ larger if necessary, we obtain 
\begin{align}\label{eq-decay-w-tilde}
\|\widetilde w_l(y',t)\eta(\lambda_l{t})\eta(-\lambda_l t^{2n+2}\log t)\|_{C^{l-1}(B_R\times I)} 
\leq 2^{-l}.
\end{align}
Fix an arbitrary $m\ge 2n+2$ and set 
\begin{align}\label{eq-definition-Sm} 
S_m(y',t)=\sum_{l=m+1}^\infty \widetilde w_l(y',t)\eta(\lambda_l{t})\eta(-\lambda_l t^{2n+2}\log t).
\end{align} 
By \eqref{eq-decay-w-tilde}, we obtain
\begin{align*}
\sum_{l=m+1}^\infty\|\widetilde w_l(y',t)\eta(\lambda_l{t})\eta(-\lambda_l t^{2n+2}\log t)\|_{C^{m}(B_R\times I)}  
\leq \sum_{l=m+1}^\infty 2^{-l}=2^{-m}.
\end{align*}
Hence, $S_m\in C^m(\bar B_r\times I)$ for any $r\in (0,R)$. 
By \eqref{eq-expression-w}, \eqref{eq-definition-w-tilde-l1}, and \eqref{eq-definition-Sm}, we obtain 
\begin{align}\label{eq-decomposition-w-9}
w(y',t,-t^{2n+2}\log t)=\sum_{l=1}^m \widetilde w_l(y', t)
\eta(\lambda_l{t})\eta(-\lambda_l{t^{2n+2}\log t})+S_m(y', t).\end{align}
We point out that summation in the right-hand side contains terms with regularity worse than $C^m$. 
We now prove that this summation also contains all terms in $v$ with regularity worse than $C^m$. 

For the same arbitrarily fixed $m\ge 2n+2$, set 
\begin{equation}\label{eq-definition-M}
M=\max\{j(2n+2)+k;\, 1\leq k+j\le m, k\ge 0, j\ge 0\}.\end{equation}
It is easy to see that $M\ge m$. By \eqref{eq-expression-any-ell}, we have 
\begin{align}\label{eq-expression-v-M}
v=\sum_{i=1}^{M} \sum_{j=0}^{N_i} c_{i, j}t^{i}(\log t)^j
+R_{M},\end{align}
and $R_M\in C^M(\Gamma' \times [0, r))\subset C^m(\Gamma' \times [0, r))$. 
By the definition of $M$ in \eqref{eq-definition-M}, any $i$ as in \eqref{eq-relation-ijk} with $k+j\le m$ satisfies $i\le M$. 
Hence, we can decompose the summation in the right-hand side of \eqref{eq-expression-v-M} into two parts
according to $k+j\le m$ or $k+j\ge m+1$. Thus, 
\begin{align*}v&=\sum_{k+j\le m}c_{j(2n+2)+k,j}t^{j(2n+2)+k}(\log t)^j\\
&\qquad+\sum_{\substack{k+j\ge m+1\\ i\le M, j\le N_i}}c_{j(2n+2)+k,j}t^{j(2n+2)+k}(\log t)^j+R_M.\end{align*}
In the second summation, the power of $t$ is given by  $j(2n+2)+k\ge j+k\ge m+1$. 
We now use the elementary fact that $t^{m+1}(\log t)^j$ is a $C^m$ function on $[0,1]$. Hence, 
each term in the second summation is in $C^m(\Gamma' \times [0, r))$, for any $r\in (0,R)$. 
We can combine the second summation with $R_M$ and write 
\begin{align*}v=\sum_{k+j\le m}c_{j(2n+2)+k,j}t^{j(2n+2)+k}(\log t)^j+\widetilde{R}_m,\end{align*}
for some $\widetilde{R}_m\in C^m(\Gamma' \times [0, r))$, for any $r\in (0,R)$. 
By \eqref{eq-definition-w-l}, we obtain 
\begin{align*}v=\sum_{l=1}^{m}\widetilde w_l+\widetilde{R}_m.\end{align*}
By introducing the cutoff functions $\eta(\lambda_l{t})\eta(-\lambda_l t^{2n+2}\log t)$ for each term in the summation, 
we write
\begin{align*}v=\sum_{l=1}^{m}\widetilde w_l\eta(\lambda_l{t})\eta(-\lambda_l t^{2n+2}\log t)
+\sum_{l=1}^{m}\widetilde w_l\big[1-\eta(\lambda_l{t})\eta(-\lambda_l t^{2n+2}\log t)\big]
+\widetilde{R}_m.\end{align*}
In the second summation, each term is 0 for small $t$ and hence is smooth in $\Gamma' \times [0, r)$ for any $r\in (0,R)$. 
Therefore, 
\begin{align}\label{eq-definition-w-l2}v(y',t)=\sum_{l=1}^{m}
\widetilde w_l(y',t)\eta(\lambda_l{t})\eta(-\lambda_l t^{2n+2}\log t)
+\widehat{R}_m(y',t),\end{align}
for some $\widehat{R}_m\in C^m(\Gamma' \times [0, r))$ for any $r\in (0,R)$.

Finally, we set 
$$w_0(y', t)= v(y', t)-w(y', t, t^{2n+2}\log t).$$
By \eqref{eq-definition-w-l2} and \eqref{eq-decomposition-w-9}, we obtain 
$w_0=\widehat{R}_m-S_m\in C^m(\Gamma' \times [0, r))$ for any $r\in (0,R)$. This holds for any $m\ge\overline{m}$. 
Thus, $w_0\in C^\infty(\Gamma' \times [0, r))$ for any $r\in (0,R)$. 
As a consequence,  
$v = w_0(y', T) + w(y', T, S)$ is smooth in $y', T, S$ and the theorem is proved. 
\end{proof}

\section{A Counterexample to the Local Convergence}\label{Sec-CntrEx}

In \cite{HanJiang2}, we proved a local convergence theorem for solutions to a type of uniformly  degenerate elliptic equations. For example, let $v$ be a solution of
\begin{align}\label{eq-d2-T}
d^2 v_{dd}+  d^2T v-(n-1)d v_d-(n+1)v=F
\end{align}
in $\Gamma \times (0, R) = \{(y', d) : (y', 0) \in \Gamma, d\in (0, R)\}$,
and $T$ is a uniformly elliptic operator in tangential coordinates. 
In addition, if we assume that $\Gamma$ is analytic open portion of $\partial 
\Omega$ and $T, F$ are analytic, then $v$ is an analytic function of $y', d, d^{n+1} \log d$ in $\Gamma \times [0, R)$. We proved that the Taylor series of $v$  with logarithmic terms  converges  in $\Gamma' \times [0, r)$ for some $\Gamma' \times [0, r) \subseteq \Gamma \times [0, R)$.

However, our equation
\eqref{eq-MainEqUnderFrame-v}  is not a simple perturbation of \eqref{eq-d2-T}. In fact, $Y_n^2 v$ has a degenerate factor $d^2$ but $Y_\beta Y_\gamma$, for $\beta, \gamma \in I_2$,  has a different degenerate factor $d$. 
In this section, we disprove the  convergence of the Taylor series with logarithmic terms of the solutions to 
\begin{align} \label{eq-CntrEx1}
d^2 v_{dd}+ d^2v_{tt}+ dv_{ss}-(n-1)d v_d-(n+1)v=0,
\end{align}
in $G_r= \{d \in \mathbb{R} : 0<d<r\} \times \{(t, s) \in \mathbb{R}^2 :  t^2+s^2 <r^2\}$.
Here $s$ may extend to higher dimensional coordinates.

Consider the operator 
$$A=d^2 \partial_{dd}+d^2 \partial_{tt}-(n-1)d\partial_d-(n+1)=d^2 \Delta_{d,t}-(n-1)d\partial_d-(n+1). $$
By Proposition 1 of \cite{BG1972}, there exists a nonanalytic function $w$ in $d$ and $t$, 
defined in a neighborhood of the origin in $\mathbb{R}^+ \times \mathbb{R}$, such that,
for each $k \in \mathbb{N}$ and $\alpha \in \mathbb{N}^2$,
\begin{align}\label{eq-NAw}
\|\partial^\alpha \Delta_{d, t}^k w\|_{L^2}&\leq C^{|\alpha|+k+1}(2k)!(2\alpha)!.
\end{align}
The same estimates hold for $\|\partial^\alpha \Delta_{d, t}^k w\|_{C^2}$ if we enlarge $C$.

Let $v$ be a bounded  $C^2$ solution of \eqref{eq-CntrEx1}. 
By a formal computation, all local terms in the expansion of $v$ vanish. 
Assume the first global term of $v$ is $c_{n+1}$. 
Then, $v$ has a  formal expansion given by
\begin{align}\label{eq-CN1}
v= c_{n+1}d^{n+1}+c_{n+2}d^{n+2}+\cdots+c_{k}d^{k}+\cdots.
\end{align}
A convergent  series solution of $Av +dv_{ss}=0$ (or \eqref{eq-CntrEx1})
can be constructed as
\begin{align}\label{eq-CntrEx}
\overline{v}=\sum_{k=0}^\infty \frac{(-A/d)^k (d^{n+1}w )}{(2k)!}s^{2k},
\end{align}
where $w$ is a nonanalytic function satisfying \eqref{eq-NAw}.
Note that  the coefficient of $s^{2k}$ has an explicit factor $d^{n+1}$ for each $k$. In fact, formally,
\begin{align}\label{eq-A-over-d}
(-A/d) (d^{n+1} h)=-d^{n+1}(d \Delta_{d, t} h+2h_d ),
\end{align}
for any function $h$. Using \eqref{eq-NAw}, we can prove that there is a universal constant $B$ such that
\begin{align*}
 |(-A/d)^k (d^{n+1}w )| \leq B^{k}(2k)!,
\end{align*}
which implies that \eqref{eq-CntrEx} and its derivative of any order in $s$ converge when $Bs^2 <1$. Further calculation shows that $\overline{v}$ belongs to Gevrey space $G_2$. In fact, \eqref{eq-NAw} and \eqref{eq-A-over-d} imply that
\begin{align*}
|\partial^p_{d, t} (-A/d)^k (d^{n+1}w )| \leq B^{k}(2k)! (2p)!.
\end{align*}
Finally, since $\overline{v}= d^{n+1}w$ is not analytic in $d, t$ when $s=0$,
 $\overline{v}$ is not analytic in $d, t$.

The example constructed above is for a linear equation with the same structure as the 
linear part of \eqref{eq-MainEqUnderFrame-v}. It strongly suggests that it is impossible to prove the 
convergence in the local setting for \eqref{eq-MainEqUnderFrame-v}.

\section{Global Convergence Theorem}\label{Sec-Converg}

In this section, we prove Theorem \ref{thm-GlobalConvergence}. 
In Section \ref{sec-Frame}, we defined a set of frames $(U_Q, \{Y_i^Q\}_{i \in I})$. Notice that all $Y_i^Q$'s are globally defined, while only in $U_Q$, they form a frame. Let  $R$ be a positive and small number so that we can take a covering of
\begin{align*}
\Omega_R:= \{z\in \mathbb{C}^n: d(z)\in (0, R)\}
\end{align*}
with  finite such $U_Q$'s. Denote  $S$ the set of  $Y_i^Q$'s corresponding to all $U_Q$'s in the covering. Assume that
\begin{align*}
S = \{X_1, X_2, \cdots, X_N\},
\end{align*}
for some $N\geq 2n$.
Here we denote $X_{2n} = \frac{\partial}{\partial d}, X_n =T$, which will replace $Y_n$ and is to be defined later in \eqref{eq-theta-T}. Then $X_i \perp Y_n, Y_{2n}$ if $i \neq n, 2n.$ When $\partial \Omega$ is analytic, all $X_i$'s are analytic in $\overline   \Omega_R$. Define the index sets $J = \{1, 2, \cdots, N\}, J_1 = \{i\in J: i\neq 2n\}$ and $J_2 = \{i\in J: i\neq n, 2n\}$.

Let $H(\partial \Omega)$ be the subbundle of $T \partial \Omega$ that its fiber at any point $Q \in \partial \Omega$ is
\begin{align}
span\{X_i(Q) : X_i \in S, i\in J_2\}.
\end{align}
Let 
$$\theta= \frac{1}{\sqrt{-1}}(\partial \rho -\bar{\partial} \rho).$$
Then, $(\partial \Omega, H(\partial \Omega), J)$ is a CR-manifold, and $\theta$ annihilates exactly $H(\partial \Omega)$. In addition, there is a unique tangent vector $T$  on $\partial \Omega$ such that
\begin{align}\label{eq-theta-T}
\theta(T) = -\lambda,\,\, d\theta(T, \cdot) = 0.
\end{align}
See \cite{WangXD}. The following result is well known.
\begin{lemma}\label{lem-TX}
 $[T, X]\in \Gamma(H(M))$ if  $X\in \Gamma(H(M))$, where $\Gamma(H(M))$ denotes the set of sections on $H(M)$.
\end{lemma}
It follows from the Cartan's formula as,
\begin{align*}
0=d\theta (T,X)
=T\theta(X)-X\theta(T)-\theta([T,X])
=-\theta([T,X]).
\end{align*}
Extend $T$ analytically onto $\overline \Omega_R$.

\begin{lemma}\label{lemma-new-frame} Under the new frame, 
\begin{align*}
\tilde{Y}_{2n} = Y_{2n}, \,\, 
\tilde{Y}_n &= T, \,\, \tilde{Y}^Q_\alpha = Y^Q_\alpha\text{ for }\alpha \in I_2,
\end{align*}
Lemma \ref{lem-Lu} and Lemma \ref{lem-NormY} still hold  in $U_Q$.
\end{lemma}

\begin{proof}
For any  point $P$ near $\partial \Omega$, 
we find a unique $Q \in \partial \Omega$ such that $d(P)= dist(P, Q)$. Under the real coordinates corresponding to  $\{z^Q_i\}$, we have
\begin{align*}
\theta = 2\mathrm{Im}(\partial \rho) =\sum_{i=1}^n\left(\frac{\partial \rho}{\partial x_{i+n}} dx_i -\frac{\partial \rho}{\partial x_{i}} dx_{i+n}\right),
\end{align*}
which is $-\lambda e^{-\lambda d(P)} dx_n$ at $P$, for   $\lambda$  given in \eqref{def-lambda}. Thus on  $M=\partial \Omega$, 
$\theta(Y_n) = -\lambda$ and $ \theta(Y_\alpha)=0$ for $\alpha \in I_2$. This verifies that $\theta$ annihilates exactly $H (\partial \Omega)$. In addition,
  $Y_n  |_{\partial \Omega}- \tilde Y_n |_{\partial \Omega} \in H(M)$ by \eqref{eq-theta-T}. 
Therefore  in $\partial \Omega \times [0, R)$, 
\begin{align*}
Y^Q_\alpha &=\tilde{Y}^Q_\alpha \quad\text{if } \alpha \in I_2,\\
Y_n  &= (1+df_n) \tilde{Y}_n  +\sum_{\alpha \in I_2} f_\alpha \tilde{Y}^Q_\alpha,
\end{align*}
for some smooth functions $f_i, i\in I_1$. Then, the form of \eqref{eq-MN} still holds, since
\begin{align*}
d^2 Y_n^2  =d^2   \tilde{Y}_n^2 +\cdots,
\end{align*} 
where omitted terms in $``\cdots"$do not affect the form of \eqref{eq-MN}.
Lemma \ref{lem-NormY}, with $\{Y_i\}$ replaced by $\{\tilde Y_i\}$, follows similarly.
\end{proof}

In the rest of this section, we use the frame  $\{\tilde{Y}_i^Q\}$ in $U_Q$ but still denote it by $\{Y_i^Q\}$ or simply $\{Y_i\}$. 
Assume that in $U_Q$, for $i, j\in J$, 
\begin{align}\label{eq-Zij}
[X_i, X_j]=S_{ij, Q}^m Y_m, 
\end{align}
where $S_{ij, Q}^m$'s are analytic functions under our assumption and the summation over $m$ is for $m\in I.$
By Lemma \ref{lem-TX}, we have for $\alpha \in J_2$,
\begin{align}\label{eq-LieBr}
S_{n \alpha, Q}^n=0,
\end{align}
which implies that when exchanging the orders of $X_n$ and $X_\alpha, \alpha \in J_2,$ in a differentiation, it does not generate a new $Y_n$ during the process. This is the key reason that why we replace $Y_n$ by $T$.

There are positive constants $D$ and $ B>0$ such that, for any $k \geq 0$, any $i,j,m$ such that $ 1\leq i, j, m \leq 2n-1$, and any $U_Q$ in the finite covering,
\begin{align}\label{eq-AnaS}
\|D_{X'}^k S_{ij, Q}^m\|_{C^\alpha (U_Q)}\leq DB^{(k-3)^+} (k-3)!.
\end{align}
If $l<0$, we set   $l!=1$.  
In deriving \eqref{eq-CombK} later on, we will also use a weaker estimate
$$\|D_{X'}^k S_{ij, Q}^m\|_{C^\alpha(U_Q)}\leq DB^{(k-2)^+} (k-2)!. $$

Let $\Phi$ 
be an analytic function in $\overline \Omega_R\times\mathbb R^N$. Then there exists positive constants $C, A$, such that for any 
$(x,y)\in \Omega_R \times\mathbb R^N$ and any nonnegative integers $j$ and $k$ 
with $j+k\le p$, 
\begin{equation}\label{eq-Composition0}
\Big|\frac{\partial^{j+k}\Phi(x,y)}{\partial x^j\partial y^k}\Big|
\le CA^{j+k}(j-2)!(k-2)! 
\end{equation}

Similar to \eqref{eq-wp}, 
 denote  
\begin{align*}
 D^p_{X'}  v=  X_{\alpha_1} \cdots X_{\alpha_p} v,
\end{align*}
where each $X'$ denotes an $X_{\alpha_l}$ for some  $\alpha_l \in J_1.$
\begin{theorem}
\label{Thm-TanAnaly}
Assume that $\rho$ is analytic on $\overline \Omega_R$ for some $R \in \mathbb{R}^+$. Let $v$ be a solution to \eqref{eq-MainEqUnderFrame-v} such that \eqref{eq-TanEstm-wp} holds.
Then there exists $D, B>0$ such that for any $p \in \mathbb{N}$, and for any $P$ satisfying $B(P, \eta) \subseteq \Omega_R$,
\begin{align}\label{eq-TanAna}
\| D^p_{X'}  v\|_{C^{2, \alpha}_g(B(P, \eta))}\leq DB^{(p-2)^+} (p-2)! d(P)^{1+\epsilon}.
\end{align}
\end{theorem}

\begin{proof} Due to
the interior analyticity of solutions to uniformly elliptic equations, $v$ is analytic in $\Omega$.
\eqref{eq-TanAna} always holds when $r<d(P)<R$ for any $r \in (0, R)$. So we only have to show \eqref{eq-TanAna} when $d(P)$ is smaller than a fixed small  $r$. By taking $r$ small enough,  for any $P$ satisfying $0<d(P)<r$, we have  
\begin{align}\label{eq-P-condition}
B(P, \eta) \subseteq U_Q \cap \Omega_R,
\end{align}
 for some $U_Q$ in the finite covering of $ \Omega_R$.

Let
\begin{align}\label{eq-Gamma}
\Gamma_Q = U_Q \cap \partial \Omega.
\end{align}
 Then $v$ satisfies \eqref{eq-MainEqUnderFrame-v} in $U_Q$, where $Y_i = Y_i^Q$ for $i \in I$ . By rearranging the sub-indices if necessary, we assume that $X_i = Y_i^Q$ for $i \in I$. For simplicity,  we omit the index $Q$ in $Y_i^Q$ and $ S_{ij, Q}^m$ in the rest of this proof.

We prove  by induction on $p$. By taking $D$  sufficiently large,  \eqref{eq-TanAna} holds for $p\leq 2$. 
 Assume that  \eqref{eq-TanAna}  is true for case $l$ when $l<p$.
  We prove case $p$ with $p\geq 3.$ We always set $C$ to be some positive constant that does not depend on $B, p$ but may change line by line.

 First we consider the easy case that each $X'$ commutes with all $Y_i$'s for $i\in I$ in
\begin{align*}
\Gamma_Q \times [0, r]:=\{(y', d): (y', 0 )\in \Gamma_Q, 0\leq d\leq r\}.
\end{align*} 
By the induction, for  any $P$ such that $0<d(P)<r$, and for any $l <p$,
 \begin{align}\label{eq-xi-Dlv-1}
 \|\xi_{D^l_{Y'} v}\|_{L^\infty(B(P,\eta))} \leq DB^{(l-2)^+}(l-2)! d^{1+\epsilon}.
 \end{align}
If we only consider the first derivatives of  $D^l_{Y'} v$ in  $\xi_{D^l_{Y'} v}$ for $l=p-1$, \eqref{eq-xi-Dlv-1} and \eqref{eq-d-dP} imply that
 \begin{align}\label{eq-Dpv-MaxPr}
  |D^p_{Y' } v| \leq 2^{1+\epsilon}DB^{p-3}(p-3)!d^{\epsilon}
 \end{align}
 in $\Omega_r$.
We  write \eqref{eq-MainEqUnderFrame-w} as
\begin{align}\label{eq-w-a}
\overrightarrow{a} \cdot \xi_{w}- (n+1)w = \tilde F d^2 +\tilde G,
\end{align} 
where $\overrightarrow{a}, \tilde F$ are polynomials in $d, \xi_v$ and $ \tilde G$ is  a polynomial in $\xi_v$, all with analytic coefficients in $z$. 
We assume that $B$ is large such that
\eqref{eq-Composition0} holds when $\Phi=\overrightarrow{a}$ or $\tilde F$, and $y = (d, \xi_v)$. So does \eqref{eq-Composition0} when $\Phi=\tilde G$, and $y =  \xi_v$.

Applying $D^{p-1}_{X'}$ to \eqref{eq-w-a}, we derive
\begin{align}\label{eq-wp-a}
\overrightarrow{a} \cdot \xi_{D_{Y'}^p v}- (n+1)D_{X'}^p v =H_p,
\end{align} 
 where 
\begin{align*}
 H_p :=D_{X'}^{p-1} ( \tilde F d^2 +\tilde G) - \sum_{l=1}^{p-1}{p-1 \choose l} (D^l_{X'} \overrightarrow{a} )\cdot \xi_{D_{X'}^{p-l} v}.
\end{align*}
  By Lemma \ref{lemma-Composition} and the induction,
 \begin{align*}
 |D_{X'}^{p-1} ( \tilde F d^2 +\tilde G)| \leq CB^{p-3}(p-3)!d^{1+\epsilon},
 \end{align*}
and by Lemma \ref{lem-Composition2} and the induction,
 \begin{align*}
&\left| \sum_{l=1}^{p-1}{p-1 \choose l} (D^l_{X'} \overrightarrow{a} )\cdot \xi_{D_{X'}^{p-l} v}\right|\\
&\leq  C\sum_{l=1}^{p-1} {p-1 \choose l}B^{(l-2)^+}(l-2)! \cdot B^{(p-l-2)^+}(p-l-2)! d^{1+\epsilon}\\
&\leq C B^{p-3} (p-2)!d^{1+\epsilon},
 \end{align*}
 where $C$ changes line by line. Thus
 \begin{align}\label{eq-Hp-Linf}
 |H_p| \leq C B^{p-3} (p-2)!d^{1+\epsilon}.
 \end{align}
 
We apply the maximum principle with  test function 
\begin{align}\label{eq-M-p}
M(x)=bB^{p-3} (p-2)!    d^{1+\epsilon}\quad\text{in }\Omega_r
\end{align}
to \eqref{eq-wp-a}
and obtain
$$|D^p_{X'} v|\leq CB^{p-3} (p-2)!   d^{1+\epsilon}.$$
In fact, by \eqref{eq-Dpv-MaxPr}, $|D^p_{X'} v|\leq M(x)$ on $\partial \Omega_r$ if 
\begin{align*}
b> 2^{1+\epsilon} D/r.
\end{align*}
 If $D^p_{X'} v - M(x)$ has a local maximum at $P \in \Omega_r$, then \eqref{eq-P-condition} holds for $P$, and  \eqref{eq-wp-a} holds in $B(P, \eta)$ under the frame $\{Y_i^Q\}$. However, 
$
w = D^p_{X'} v - M(x)
$
 is a subsolution of  the elliptic equation  $\overrightarrow{a} \cdot \xi_{w}=0$ if $b$ is much larger than $C$ in \eqref{eq-Hp-Linf}. It contradicts that $D^p_{X'} v - M(x)$  has a local maximum at $P$. We observe that the choice of $C$ is independent of $B, p$.

Note that Lemma \ref{lemma-Composition} still holds 
if we replace the absolute value $|\cdot|$ by the $C^\alpha_g$-norm. Similarly as in deriving \eqref{eq-Hp-Linf}, we have
 \begin{align*}
 \|H_p\|_{C^\alpha_g(B(P,\eta))} \leq C B^{p-3} (p-2)!d^{1+\epsilon}.
 \end{align*}
Applying the Schauder estimates to \eqref{eq-wp-a},
\begin{align*}
\| D^p_{X'}  v\|_{C^{2, \alpha}_g(B(P, \frac{1}{2}\eta)))} \leq CB^{p-3}(p-2)! d^{1+\epsilon},
\end{align*}
which implies
 \eqref{eq-TanAna} by setting $B$  larger than $C$ and coving $B(P, \eta)$ with 
 smaller balls of radius $\frac{1}{2}\eta$.

In the rest of the proof, we do not assume that $X'$  commutes with all $Y_i$'s in $\Gamma \times [0, r].$ 
We  switch the $d$ factors in $D^{l}_{X'} \xi_v $ to the left of $D^{l}_{X'} $ and still denote it as $D^{l}_{X'} \xi_v$.
For example, the entries of $D^{l}_{X'} \xi_v$  include
\begin{align}\label{eq-example-1}
d  D^{l}_{X'} Y_\beta Y_\gamma v ,\quad
d^{\frac{3}{2}}  D^{l}_{X'} Y_\beta v_d, \quad
d^2  D^{l}_{X'}  v_{dd}. 
\end{align}
Set $\widehat{D^{l}_{X'} \xi_v}$
to be a vector that it includes all entries in $D^{l}_{X'} \xi_v$, and any term generated from entries in $D^{l}_{X'} \xi_v$ by switching the orders of vector fields. For example, $\widehat{D^{l}_{X'} \xi_v}$ includes
\begin{align*}
d  Y_\beta D^{l}_{X'}  Y_\gamma v,\quad d^{\frac{3}{2}} \frac{\partial}{\partial d}  D^{l}_{X'} Y_\beta v,\quad d^2  D^{2}_{X'} \frac{\partial^2}{\partial d^2} D^{l-2}_{X'} v,
\end{align*}
which are derived from \eqref{eq-example-1} by switching the orders of differentiation.
Besides \eqref{eq-TanAna}, we   prove a stronger estimate inductively that,  when $l\leq p$, 
\begin{align}\label{eq-TanAna01}
\|  \widehat{D^{l}_{X'} \xi_v}\|_{C^\alpha_g(B(P, \eta))} \leq DB^{(l-2)^+} (l-2)! d(P)^{1+\epsilon}.
\end{align}
The benefit is that to prove case $l=p$ during the induction, for certain derivatives of $v$ up to $(p+1)$-th order, we do not have to worry about the order of differentiation.

Applying $D^{p-1}_{X'}$ to \eqref{eq-w-a}, we derive 
\begin{align}\label{eq-Hp-Gp}
\overrightarrow{a} \cdot \xi_{D_{X'}^p v}- (n+1)D_{X'}^p v =\tilde H_p  +\tilde G_p,
\end{align} 
where
\begin{align*}
\tilde H_p &= D_{X'}^{p-1} ( \tilde F d^2 +\tilde G) - \sum_{l=1}^{p-1}{p-1 \choose l} (D^l_{X'} \overrightarrow{a} )\cdot D_{X'}^{p-l} (\xi_{ v}),\\
\tilde G_p &= \overrightarrow{a} \cdot \xi_{D_{X'}^p v} - \overrightarrow{a} \cdot D_{X'}^p \xi_{ v}.
\end{align*}
Since the highest order  derivatives of $v$ in $\tilde H_p$ is $D^{p+1}_{X'} v$, applying \eqref{eq-TanAna01} and Lemmas \ref{lemma-Composition}, \ref{lem-Composition2}, we have
\begin{align*}
\|\tilde H_p \|_{C^\alpha_g (B(P, \eta))} \leq CB^{p-3}(p-2)! d^{1+\epsilon},
\end{align*}
which is quite similar to the case that each $X'$ commutes with $Y_i$'s for all $i\in I$.
We want to show that
\begin{align*}
\|\tilde G_p \|_{C^\alpha_g (B(P, \eta))} \leq CB^{p-3}(p-2)! d^{1+\epsilon}.
\end{align*}

{\it Case 1: $D^p_{X'}= D^p_{X_n}$}. We need the following lemma.

\begin{lemma}\label{lem-SwitchOrder}
Assume that $p\geq 3$ and \eqref{eq-AnaS}, \eqref{eq-TanAna01} hold
for $1\leq l \leq p-1$.
Then, 
\begin{align}\label{eq-Switch-p-factorial}
\|  D_{X^n}^p \xi_{ v} -\xi_{D_{X^n}^p v}\|_{C^\alpha_g(B(P,\eta))}
\leq CB^{p-3}(p-2)!d(P)^{1+\epsilon},
\end{align}
where $C$ depends on $D$ but  it is independent of $B, p$.
\end{lemma}

\begin{proof}
We take the term
\begin{align*}
d D^p_{X'}   Y_\beta Y_\gamma  v-  d  Y_\beta Y_\gamma D^p_{X'}  v
\end{align*}
for example. The rest entries in $D_{X^n}^p \xi_{ v} -\xi_{D_{X^n}^p v}$ can be estimated in a similar way.

Switching $Y_\beta$ with $X_n$'s from
$d D^p_{X_n}   Y_\beta Y_\gamma  v$ 
to $dY_\beta D^p_{X_n}    Y_\gamma  v$, 
it generates terms of the form, for $p_1=1,\cdots, p$, 
\begin{align}\label{eq-Switch}
d D^{p_1-1}_{X_n} (S_{n\beta}^m  \cdot Y_m D^{p-p_1}_{X_n} Y_\gamma v),
\end{align}
which  is actually generated from $d D^{p_1}_{X_n}   Y_\beta D^{p-p_1}_{X_n} Y_\gamma v$ 
to $d D^{p_1-1}_{X_n} Y_\beta D^{p-p_1+1}_{X_n} Y_\gamma v$.
Similarly, switching $Y_\gamma$ with $X_n$'s from
$d Y_\beta  D^p_{X_n}   Y_\gamma  v$ 
to $dY_\beta  Y_\gamma D^p_{X_n}     v$, 
it generates terms of the form, 
\begin{align}\label{eq-Switch1}
d Y_\beta D^{p_1-1}_{X_n} (S_{n \gamma}^m  \cdot Y_m D^{p-p_1}_{X_n} v).
\end{align}
By \eqref{eq-LieBr}, $Y_m$ cannot be $X_n$ in \eqref{eq-Switch} and \eqref{eq-Switch1}.

Since both $m$ and $\gamma$ are not $n$ in \eqref{eq-Switch},  by \eqref{eq-TanAna01} and the fact that
\begin{align*}
d D^{p_1-1-l}_{X_n}  Y_m D^{p-p_1}_{X_n} Y_\gamma v
\end{align*}
  is an entry of $\widehat{D^{p-1-l}_{X_n} \xi_v}$, we get
\begin{align}
& d\| D^{p_1-1}_{X_n} (S_{n \beta}^m  \cdot Y_m D^{p-p_1}_{X_n} Y_\gamma v)\|_{C^\alpha_g(B(P, \eta))}\nonumber\\
&\leq d\sum_{l=0}^{p_1-1}  {p_1-1 \choose l} 
\| (D_{X_n}^l S^m_{n\beta})(D^{p_1-1-l}_{X_n}  Y_m D^{p-p_1}_{X_n} Y_\gamma v)\|_{C^\alpha_g(B(P, \eta))} \nonumber\\
&\leq \sum_{l=0}^{p_1-1}  {p_1-1 \choose l} (DB^{(l-2)^+} (l-2) !)(DB^{(p-3-l)^+} (p-3-l)! )d(P)^{1+\epsilon}\nonumber\\
&\leq \sum_{l=0}^{p_1-1}  {p_1-1 \choose l}  (l-2) ! (p-3-l)! D^2 B^{p-3}d(P)^{1+\epsilon} \label{eq-CombK}\\
&= \sum_{l=0}^{p_1-1}  {p_1-1 \choose l}  (l-2) ! (p_1-3-l)! \frac{(p-3-l)!}{(p_1-3-l)!} D^2 B^{p-3}d(P)^{1+\epsilon}.\nonumber
\end{align}
For $0\leq l \leq p_1-1$,
\begin{align*}
\frac{(p-3-l)!}{(p_1-3-l)!} \leq \frac{(p-3)!}{(p_1-3)!}.
\end{align*}
Lemma \ref{lem-Composition2} implies
\begin{align*}
\sum_{l=0}^{p_1-1}  {p_1-1 \choose l}  (l-2) ! (p_1-3-l)!\leq C(p_1-3)!.
\end{align*}
 Combining all these, we obtain
\begin{align*}
\sum_{p_1=1}^{p}\|d D^{p_1-1}_{X_n} (S_{n \beta}^m  \cdot Y_m D^{p-p_1}_{X_n} Y_2 v)\|_{C^\alpha_g(B(P, \eta))}
\leq C B^{p-3}(p-2)! d(P)^{1+\epsilon}.
\end{align*}
This finishes the estimate of terms in the form of \eqref{eq-Switch}. 

To estimate \eqref{eq-Switch1}, we have to consider two types of terms: 
applying $Y_\beta$ to $S_{n \gamma}^m$, or to $Y_m D^{p-p_1}_{X_n} v$. Then, 
\begin{align*}
&  d\|Y_\beta D^{p_1-1}_{X_n} (S_{n \gamma}^m  \cdot Y_m D^{p-p_1}_{X_n} v)\|_{C^\alpha_g(B(P, \eta))}\\
&\leq d\sum_{l=0}^{p_1-1}  {p_1-1 \choose l} 
\|(Y_\beta D_{X_n}^l S^m_{n\gamma})(D^{p_1-1-l}_{X_n}  Y_m D^{p-p_1}_{X_n}  v)\|_{C^\alpha_g(B(P, \eta))}\\
&\qquad\qquad +d\sum_{l=0}^{p_1-1}  {p_1-1 \choose l} 
\| (D_{X_n}^l S^m_{n\gamma})(Y_\beta D^{p_1-1-l}_{X_n}  Y_m D^{p-p_1}_{X_n} v)\|_{C^\alpha_g(B(P, \eta))},
\end{align*}
where the second term is bounded by \eqref{eq-CombK} as above.
The estimate of the first term is slightly diffferent. By  \eqref{eq-TanAna01},
\begin{align*}
& d\sum_{l=0}^{p_1-1}  {p_1-1 \choose l}  
\| (Y_\alpha D_{X_n}^l S^m_{n\beta})(D^{p_1-1-l}_{X_n}  Y_m D^{p-p_1}_{X_n}  v) \|_{L^\infty(B(P, \eta))}\\
&\leq \sum_{l=0}^{p_1-1}  {p_1-1 \choose l} (DB^{(l-2)^+} (l-2) !)(DB^{(p-3-l)^+} (p-3-l)! )d(P)^{1+\epsilon}\\
&\leq \sum_{l=0}^{p_1-1}  {p_1-1 \choose l}  (l-2) ! (p-3-l)! D^2 B^{p-3}d(P)^{1+\epsilon},
\end{align*}
which is the same as \eqref{eq-CombK}. So we complete the proof of the the lemma.
\end{proof}

Again we apply the maximum principle with   test function 
$$M(x)=bd^{1+\epsilon}\quad\text{in }\Omega_r$$ 
to \eqref{eq-Hp-Gp}
and obtain
$$|D^p_{X_n} v|\leq CB^{p-3} (p-2)!   d^{1+\epsilon}.$$
For any $P$ such that $0<d(P)<r$, we apply the Schauder estimates to \eqref{eq-Hp-Gp} to derive
\begin{align}\label{eq-D-Xn-p}
\| D^p_{X^n}  v\|_{C^{2, \alpha}_g(B(P, \frac{1}{2}\eta)))} \leq CB^{p-3}(p-2)! d(P)^{1+\epsilon}.
\end{align}

\smallskip

{\it Case 2: In $D^p_{X'}$ there are exactly $(p-1)$ $X_n$'s.} 
We first consider $D^p_{X'}= X_\alpha D^{p-1}_{X_n} $, for some $\alpha\in I_2$.

Switching from
$d Y_\alpha D^{p-1}_{X_n}   (Y_\beta Y_\gamma  v)$ 
to $d   Y_\beta Y_\gamma  (Y_\alpha D^{p-1}_{X_n}     v)$, 
it generates terms of the form, for $p_1+p-p_1=p$ and $p_1\geq 2$,
\begin{align}
&dY_\alpha D^{p_1-2}_{X_n} (S_{n\beta}^m  \cdot Y_m D^{p-p_1}_{X_n} Y_\gamma v),\label{eq-Ya-p-1}\\
&d Y_\beta Y_\alpha D^{p_1-2}_{X_n} (S_{n \gamma}^m  \cdot Y_m D^{p-p_1}_{X_n} v)\label{eq-Ya-p-2},
\end{align}
and two terms of  form 
\begin{align}\label{eq-DYnp}
dS_{ \alpha \beta}^i Y_i D^{p-1}_{X_n} Y_\gamma v, \quad dY_\beta (S_{ \alpha \gamma}^i Y_i  D^{p-1}_{Y_n} v),
\end{align}
generated by switching $Y_\alpha$ with $Y_\beta$ or $Y_\gamma$.
\eqref{eq-Ya-p-1}, \eqref{eq-Ya-p-2} can be estimated as in the proof of Lemma \ref{lem-SwitchOrder}. For term in in \eqref{eq-DYnp}, when $i$ in the summation is not $n$, their $C^\alpha_g(B(P,\eta))$ norms are bounded by $CB^{p-3}(p-3)!$ due to \eqref{eq-TanAna01} with $l=p-1$. In the case $i = n$ in \eqref{eq-DYnp}, the terms
\begin{align}
dS_{ \alpha \beta}^n D^{p}_{X_n} Y_\gamma v, \quad dY_\beta (S_{ \alpha \gamma}^n D^{p}_{Y_n} v),
\end{align}
are bounded by \eqref{eq-D-Xn-p} and \eqref{eq-TanAna01}. For example,
\begin{align*}
dS_{ \alpha \beta}^n D^{p}_{X_n} Y_\gamma v = S_{ \alpha \beta}^n \left(d Y_\gamma  D^{p}_{X_n} v  + \sum_{p_1=1}^{p} D_{X_n}^{p_1-1} (S_{n\gamma}^m Y_m D_{X_n}^{p-p_1}v)\right),
\end{align*}
whose $C^\alpha_g(B(P,\eta))$ norm is bounded by
\begin{align*}
C\left(CB^{p-3}(p-2)! d(P)^{1+\epsilon} + \sum_{p_1=1}^{p} \sum_{l=0}^{p_1-1}{p_1-1 \choose l}   CB^{p-3} (l-2) !(p-3-l)! )d(P)^{1+\epsilon}\right),
\end{align*}
which is bounded by $CB^{p-3}(p-2)!$ for a larger $C$ by the  estimate of  \eqref{eq-CombK} in Lemma \ref{lem-SwitchOrder}. In sum,
Lemma \ref{lem-SwitchOrder} still holds if we replace $D^p_{X_n}$ by $X_\alpha D^p_{X_n}$. Then, we can apply the maximum principle and Schauder estimates to derive
\begin{align*}
\|X_\alpha D^{p-1}_{X_n} v\|_{C^{2, \alpha}_g(B(P, \frac{1}{2}\eta)))} \leq CB^{p-3}(p-2)! d(P)^{1+\epsilon}.
\end{align*}

If  $D^p_{X'}= X_n X_\gamma D^{p-2}_{X_n}, \cdots,  D^{p-1}_{X_n}  X_\gamma $, 
then we switch $X_\gamma$ in $D_{X'}^p v$ to the left of all $X_n$'s to derive $X_\gamma D_{X_n}^{p-1}v$. It generates  at most $p-1$ additional terms, where each term is a $(p-1)$-th derivative of $v$ and  is estimated by \eqref{eq-TanAna01}.
Hence we  have
\begin{align}\label{eq-Dp-Xn-2}
\|D^p_{X'} v\|_{C^{2, \alpha}_g(B(P, \frac{1}{2}\eta)))} \leq CB^{p-3}(p-2)! d(P)^{1+\epsilon},
\end{align}
if exactly one of the $X'$'s is $X_n.$

\smallskip

{\it Case 3:  there are at most $(p-2)$ $X_n$'s in $D^p_{X'} $.}
Note that all $(p+1)$-th derivatives generated by switching  orders of the vector fields 
from  $dD^p_{X^\prime} Y_\beta Y_\gamma v$ to $d Y_\beta Y_\gamma D^p_{X^\prime} v $
have at least 2 non-$X_n$ vector fields, which can be estimated similarly as estimating \eqref{eq-Switch}, \eqref{eq-Switch1} in the proof of Lemma \ref{lem-SwitchOrder}.
Then, we can apply the maximum principle and Schauder estimates to derive
\begin{align}\label{eq-Dp-Xn-3}
\|D^p_{X'} v\|_{C^{2, \alpha}_g(B(P, \frac{1}{2}\eta)))} \leq CB^{p-3}(p-2)! d(P)^{1+\epsilon},
\end{align}
if there are at most $(p-2)$ $X_n$'s in $D^p_{X'} $.

\medskip
Finally,  \eqref{eq-D-Xn-p}, \eqref{eq-Dp-Xn-2}, \eqref{eq-Dp-Xn-3} imply 
 \eqref{eq-TanAna} and \eqref{eq-TanAna01}  for case $l=p$. In fact, for an entry $w$ in  $\widehat{D^p_{X'} \xi_v}$, 
we can always switch orders of differentiation to get terms in the form of  $\xi_{D^p_{X'} v}$, where $X'$ may change, with  at most $4n p$ additional  $(p+1)$-th derivatives of $v$. We never switch two vector fields if they are  neither  $X_n$ nor $X_{2n}$.  Take 
 \begin{align*}
 \widehat{D^p_{X'} \xi_v} = d D_{X_n}^{p-1} X_\alpha (Y_\beta Y_\gamma v)
\end{align*}
for example, 
where $\alpha\in J_2, \beta, \gamma \in I_2.$
We switch it to $d  X_\alpha Y_\beta (D_{X_n}^{p-1} Y_\gamma v)$, then apply $X_\alpha = \sum_{\theta \in I_2} c_{\alpha\theta} Y_\theta$ in $B(P, \eta)$ to get $2n-2$ terms in the form of $\xi_{D^p_{X'} v}.$
In general, \eqref{eq-LieBr} implies that all generated $(p+1)$-th derivatives of $v$ are in the form of $\xi_{D^{p-1}_{X'}v}$. Hence \eqref{eq-D-Xn-p}, \eqref{eq-Dp-Xn-2}, \eqref{eq-Dp-Xn-3} imply that
 \begin{align*}
\|  \widehat{D^{p}_{X'} \xi_v}\|_{C^\alpha_g(B(P, \eta))} \leq CB^{p-3} (p-2)! d(P)^{1+\epsilon}.
 \end{align*}
which implies \eqref{eq-TanAna01} since $C$ is independent of $B, p$. 
\end{proof}

Denote $d = \frac{1}{2}t^2$.
 Theorem \ref{Thm-TanAnaly} implies that,  for probably  bigger  $D$ and $B$, 
under local principal coordinates $\{y', d\}$, 
\begin{align} 
\label{eq-Euc-wp-ana}
\|D^p_{y'}v_{tt}\|_{C^\alpha (\Gamma_Q\times [0, R))},
\|t^{-1} {D^p_{y'}v_{t}}\|_{C^\alpha (\Gamma_Q\times [0, R))}, 
\|t^{-2}{D^p_{y'}v}\|_{C^\alpha(\Gamma_Q\times [0, R))}\leq DB^{p-2}(p-2)!.
\end{align}
Consider the series of $v$ with logarithmic terms given by
 \begin{align*}
v(y', t) = c_3t^3 +\cdots + c_{2n+1}t^{2n+1} + c_{2n+2, 1} t^{2n+2}\log t +c_{2n+2, 0}t^{2n+2}+\cdots.
\end{align*} 
 Then the analyticity of the local coefficients $$c_3(y'), c_4(y'), \cdots, c_{2n+1}(y'), c_{2n+2, 1}(y')$$ follows directly from the assumption that $\partial \Omega$ is analytic. These coefficients are determined by the formal computation in Section \ref{sec-FormalComputation}.
The analyticity of
  the first nonlocal coefficient $c_{2n+2,0}(y')$ follows from  \eqref{eq-Euc-wp-ana}.
See Section 3 of \cite{HanJiang2}.  
  
  Set
\begin{align*}
Z = (v, v_t, tv_{tt}, D_{y'}v, t D_{y'}v_t)^T.
\end{align*}  
Then \eqref{eq-v-ODE} can be transformed to a Fuschsian equation of form
\begin{align*}
t\partial_t Z + A Z = t H(y', t, Z, D_{y'}Z),
\end{align*}
where $A$ is a constant matrix.
Techniques in \cite{KL:1}, \cite{KL:2} show that when
\begin{align*}
c_3, \cdots, c_{2n+1}, c_{2n+2, 1}, c_{2n+2, 0},
\end{align*}
are analytic, then $v$ is analytic in $y', t, t\log t$ in $\Gamma_Q\times [0, R)$. It follows from \eqref{eq-Ni} that $v$ is analyt.ic in $y', t, t^{2n+2}\log t$ in $\Gamma_Q\times [0, R)$, which concludes Theorem  \ref{thm-GlobalConvergence}.

\section{The Local Case and the Gevrey Space}\label{Sec-Local}

In this section, we discuss the behavior of $v$ if only an  portion of 
$\partial \Omega$ is analytic. We will prove that  $v$ belongs to 
the Gevrey space of order 2 in tangential directions, 
which consists of functions $v$ such that, under the principal coordinates $y', d,$
$$\|D^p_{y^\prime} v\|_{L^\infty}\leq DB^p (p!)^2,$$
for some constants $D, B$ independent of $p.$

Define 
$$G_R=
\{y=(y^\prime, d) : |y^\prime|<R, 0< d <R\}.$$ 
Use the local frame defined in Lemma \ref{lemma-new-frame}.
Assume that $R$ is small enough so that $G_R \subset U_Q$ for some $Q \in \partial \Omega$. Denote $Y_i =Y_i^Q$ for $i \in I$.
Assume that in $G_R$, for $i, j\in I$, 
\begin{align}\label{eq-Yij}
[Y_i, Y_j]=S_{ij}^m Y_m, 
\end{align}
where $S_{ij}^m$'s are smooth functions. The summation over $m$ is for $m\in I.$
By Lemma \ref{lem-TX}, we have for $\alpha \in I_2$,
\begin{align}\label{eq-LieBr-1}
S_{n \alpha}^n=0.
\end{align}
If $\bar G_R \cap \{d=0\}$ is analytic, there are positive constants $D$ and $ B$ such that, for any $k \geq 0$, any $i,j,m$ such that $ 1\leq i, j, m \leq 2n-1$, 
\begin{align}\label{eq-AnaS-Y}
\|D_{Y'}^k S_{ij}^m\|_{C^\alpha (G_R)}\leq DB^{(k-3)^+} (k-3)!.
\end{align}

\begin{theorem}
\label{Thm-LocTanAnaly}
Let $v$ be a solution  to  \eqref{eq-MA1} in $G_{2R}$.
Assume that  $\bar G_{2R} \cap \{ d=0\}$ is analytic and  \eqref{eq-TanEstm-wp} holds for all $P\in G_R.$
Then there exists $D, B>0$ such that for any $p \in \mathbb{N}$,
\begin{align}\label{eq-TanEsmNoN}
 \delta_p^{(p-2)^+} |\xi_{D_{Y^\prime}^p v}| &\leq   DB^{(p-2)^+} (p-2)! d^{1+\epsilon}\tilde{d}^{-(p-2)^+}, \text{ in } G_R,
\end{align}
where $\tilde{d}(z) = R - |y'(z)|$, $\delta_p(z)={\tilde d}(z)/{p}$, 
 and $D_{Y^\prime}^p v$ denotes any $p$-th order derivative of $v$ with respect to $Y_1, \cdots, Y_{2n-1}$.
\end{theorem}

\begin{proof}

The proof is similar to that of Theorem \ref{Thm-TanAnaly}. The main difference  is that we have to deal with the domain shrinking  for interior estimates.
By the interior analyticity for uniformly elliptic equations, for any fixed $r\in (0, R)$, there are some constants $D, B$ such that
\eqref{eq-TanEsmNoN} holds in $\{z\in G_R : d(z)>r\}$. 
For $l \in \mathbb{N}$, set 
\begin{align}\label{eq-Tl}
T_l=\{z\in G_R: 0< d(z)<\delta_l^2 =\tilde{d}(z)^2/l^2 \}.
\end{align}

Define  $\widehat{D^{l}_{Y'} \xi_v}$  as in the proof of Theorem \ref{Thm-TanAnaly}. 
We prove inductively that
\begin{align}\label{eq-TanAna02}
|  \widehat{D^{l}_{Y'} \xi_v}| \leq D(B \delta_l^{-1} \tilde d^{-1})^{(l-2)^+} (l-2)! d^{1+\epsilon} \text{ in } G_R. 
\end{align}
For  $l\leq 3$,  \eqref{eq-TanAna02} follows from \eqref{eq-TanEstm-wp} by setting $D$ sufficiently large. 
Assume that  \eqref{eq-TanAna02} holds for $l=0,1, \cdots, p-1$. We prove case $p$ in three steps.

{\it Step 1.} We show that
 $$|D^p_{Y^\prime} v|\leq \delta_p^{-p+2} DB^{p-2} (p-2)! d^{1+\epsilon} \tilde{d}^{-(p-2)} \text{ in }G_R.$$ 
In Step 1, we exclude the special case that exactly one of the $Y'$'s in $D_{Y'}^p v$ is $Y_n$ and will discuss it in Step 2.

First, we consider at  $P\in T_p$, which means that
\begin{align*}
0<d(P) < \tilde d(P)^2 / p^2.
\end{align*}
Set
\begin{align*}
G_{P, \delta, \delta^2} =\{y: |y^\prime- y^\prime(P)|<\delta, 0\leq d< \delta^2\}.
\end{align*}
Take
\begin{align}\label{eq-delta}
\delta={\tilde{d}(P)}/{p}.
\end{align}
 Then $G_{P, \delta, \delta^2}$ contains $P$ and is contained in $T_{p-1}$. In fact, if $z \in G_{P, \delta, \delta^2}$, then
\begin{align*}
|y'(z)- y'(P)|< \delta, 0< d(z) < \delta^2,
\end{align*} 
 which implies that
\begin{align*}
|\tilde d (z)  - \tilde d(P)| \leq  |y'(z)- y'(P)| < \delta = \tilde d(P)/p.
\end{align*} 
Hence  for $p> 3$,
\begin{align*}
d(z) < \delta^2 = \tilde d(P)^2/p^2 \leq (\tilde d(P) - |\tilde d (z) - \tilde d(P)|)^2/(p-1)^2 <\tilde d(z)^2/(p-1)^2,
\end{align*}
 which implies that $z \in T_{p-1}$.
 
Set  test function
\begin{align}\label{eq-M-Ana}
M(y', d)=b_1|y^\prime - y'(P)|^2 d^{ \epsilon}+b_2d^{1+\epsilon}.
\end{align}
Then, 
\begin{align*}
M(y^\prime, \delta^2)&=b_1 |y^\prime- y'(P)|^2 \delta^{2\epsilon} +b_2\delta^{2+2\epsilon} ,\\
M(y^\prime, d)&=b_1\delta^2 d^{\epsilon} +b_2d^{1+\epsilon} \text{ if } |y^\prime- y'(P)|=\delta.
\end{align*}
For  any point $y= (y',d) \in G_{P,\delta, \delta^2}$, $\tilde{d}(y) \geq \tilde{d}(P)-\delta.$ 
By \eqref{eq-TanEsmNoN} for $l=p-1$,
\begin{align}\label{eq-Dpv-1}
|D^p_{Y^\prime} v| \leq  \delta^{-p+3}D B^{p-3}(p-3)!d^{\epsilon} (\tilde{d}(P)-\delta)^{-(p-3)}.
\end{align} 

If
\begin{align}
b_1&\geq  \delta^{-p+1} D B^{p-3}(p-3)!   (\tilde{d}(P)-{\delta})^{-(p-3)},\label{eq-b1}\\
b_2&\geq  \delta^{-p+1} D B^{p-3}(p-3)!  (\tilde{d}(P)-{\delta})^{-(p-3)},\label{eq-b2}
\end{align}
then,  $|D_{Y'}^p v| \leq M$ on $\partial G_{P, \delta, \delta^2}$. Notice
\begin{align}\label{eq-tilde-d-p}
 (\tilde{d}(P)-\delta)^{-(p-3)}
=\left(\tilde{d}(P)-\tilde{d}(P)/{p}\right)^{-(p-3)} \leq C \tilde{d}(P)^{-(p-3)},
\end{align}
where $C$ is some universal constant.  Applying  \eqref{eq-delta},
\eqref{eq-b1}-\eqref{eq-b2} hold if
\begin{align*}
 b_1&\geq C \delta^{-p+2}  B^{p-3}(p-2)!  \tilde{d}(P)^{-(p-2)},\\
 b_2&\geq C  \delta^{-p+2} B^{p-3}(p-2)!  \tilde{d}(P)^{-(p-2)},
\end{align*}
for some  constant $C$ independent of $B, p$.

Now $v$ satisfies equation \eqref{eq-Hp-Gp} in $G_{P, \delta, \delta^2}$ with $D_{X'}^p$ replaced by $D_{Y'}^p $.
Applying \eqref{eq-TanAna02} and Lemmas \ref{lemma-Composition}, \ref{lem-Composition2}, we have that in $G_{P, \delta, \delta^2}$,
\begin{align*}
|\tilde H_p | \leq C\delta^{-p+3}  B^{p-3}(p-2)! d(P)^{1+\epsilon}\tilde d(P)^{-p+3}.
\end{align*}

To bound $\tilde G_p$ in \eqref{eq-TanAna02},
 we need a lemma similar to Lemma \ref{lem-SwitchOrder}.

 \begin{lemma}\label{lem-SwitchOrder1}
Assume that $p\geq 4$ and \eqref{eq-AnaS-Y}, \eqref{eq-TanAna02} hold
for $1\leq l \leq p-1$ in $G_R$.
Then, 
\begin{align}\label{eq-Switch-p-factorial-Y}
|  D_{Y^n}^p \xi_{ v} -\xi_{D_{Y^n}^p v}|
\leq C (B\delta^{-1} \tilde d^{-1})^{p-3}(p-2)!d^{1+\epsilon},
\end{align}
in $G_R$,
where $C$ is independent of $B, p$.
\end{lemma}

The proof is similar to the proof of Lemma \ref{eq-Switch-p-factorial}. The difference is that we replace $B$ by $B\delta^{-1} \tilde d^{-1}$ and replace the $C^\alpha_g(B(P,\eta))$ norm by the absolute value $|\cdot |$.

By the maximum principle and similar computation to \eqref{eq-MaxP}, when $D^p_{Y^\prime} v =D^p_{Y_n} v$, $D^p_{Y^\prime} v \leq M$ in $G_{P, \delta, \delta^2}$, which further implies that
\begin{align}
|D^p_{Y^n} v|\leq \delta^{-(p-2)}  CB^{p-3} (p-2)! d^{1+\epsilon} \tilde{d}^{-p+2}
\end{align}
at any $P \in T_p.$ 

For the case
$
D_{Y'}^p v = Y_\alpha D_{Y'}^{p-1} v
$,
with $\alpha \in I_2$, \eqref{eq-Dpv-1} can be improved to be
\begin{align*}
|D^p_{Y^\prime} v| \leq  \delta^{-p+3}D B^{p-3}(p-3)!d^{\frac{1}{2}+\epsilon} (\tilde{d}(P)-\delta)^{-(p-3)}.
\end{align*}
In the case that there is at least one $Y'$ in $Y_\alpha D_{Y'}^{p-1} v$
is not $Y_n$, we  apply the maximum principle with test function
\begin{align}\label{eq-M-Ana-1}
M(y) = b_1 |y' - y'(P)|^2 d^{\frac{1}{2}+\epsilon} + b_2 d^{1+\epsilon}
\end{align}
to show that
\begin{align}\label{eq-Dpv-max}
|D_{Y'}^{p} v|\leq \delta^{-(p-3)}  CB^{p-3} (p-2)! d^{1+\epsilon} \tilde{d}^{-p+2}.
\end{align}
The computation is similar to \eqref{eq-MaxP}. By exchanging the orders of differentiation if necessary, \eqref{eq-Dpv-max} is  true as long as at least two $Y'$'s in $D_{Y'}^{p} v$ are not $Y_n$.

Secondly, we consider $z\in T_{p}^C \cap G_R$. By \eqref{eq-TanAna02} for $l=p-2$, 
if  $D^p_{Y^\prime} v=D_{Y_n}^2 D^{p-2}_{Y^\prime} v$, then
\begin{align*}
|D^p_{Y^\prime} v|&\leq d^{-2} \delta^{-(p-4)} DB^{p-4}(p-4)! d^{1+\epsilon} \tilde{d}^{-p+4}\\
&\leq \delta^{-(p-2)} CB^{p-4} (p-2)! d^{1+\epsilon} \tilde{d}^{-(p-2)},
\end{align*}
since $d\geq \delta^2$. Similarly, if $D^p_{Y^\prime} v=Y_n Y_\alpha D^{p-2}_{Y^\prime} v, Y_\alpha Y_n D^{p-2}_{Y^\prime} v$ or $Y_\alpha Y_\beta D^{p-2}_{Y^\prime} v$, for some $\alpha \in I_2$, we have
\begin{align}\label{eq-TpC-Linf}
|D^p_{Y^\prime} v| \leq  \delta^{-(p-3)} CB^{(p-4)} (p-2)! d^{1+\epsilon} \tilde{d}^{-(p-2)}.
\end{align} 

In sum, we have that in $G_R$,
\begin{align}\label{eq-Dpv-Linf}
|D^p_{Y^n} v| \leq  \delta^{-(p-2)} CB^{(p-3)} (p-2)! d^{1+\epsilon} \tilde{d}^{-(p-2)},
\end{align}
and if at least two $Y'$'s in $D^p_{Y^\prime} v$ do not equal $Y_n$,
\begin{align}\label{eq-Dpv-Linf-spe}
|D^p_{Y^\prime} v| \leq  \delta^{-(p-3)} CB^{(p-3)} (p-2)! d^{1+\epsilon} \tilde{d}^{-(p-2)}.
\end{align}

\smallskip

{\it Step 2.} Let $P \in T_p$.
We observe that  for any $z \in B(P, \eta)$, $\tilde d (z) \geq (1-1/p)\tilde{d}(P)$, which implies that
\begin{align*}
\tilde d (z)^{-(p-2)} \leq C\tilde{d}(P)^{-(p-2)}.
\end{align*}
By the interior $C^{1, \alpha}$ estimates for uniformly elliptic equations, for  any $P \in T_p$.
\begin{align}\label{eq-C1a-Dpv}
\|D^p_{Y^\prime} v\|_{C^{1,\alpha}_g(B(P, \frac{1}{2}\eta))} 
&\leq C (\|D^p_{Y^\prime} v\|_{L^\infty(B(P, \eta))}+\|\tilde H_p + \tilde G_p\|_{L^\infty(B(P, \eta))}).
\end{align}

Then by \eqref{eq-Dpv-Linf},
\begin{align}\label{eq-C1a1}
\|D^p_{Y^n} v\|_{C_g^{1,\alpha}(B(P, \frac{1}{2}\eta))}
\leq \delta^{-p+2} CB^{p-3}(p-2)!d(P)^{1+\epsilon}\tilde{d}(P)^{-p+2}.
\end{align}
We applied \eqref{eq-tilde-d-p} here.
\eqref{eq-C1a1} also holds if we replace $p$ by  $l < p$. If at least two $Y'$'s in $D_{Y'}^p v$ are not $Y_n$, \eqref{eq-Dpv-Linf-spe}, \eqref{eq-C1a-Dpv}  imply that
\begin{align}\label{eq-C1a2}
\|D^p_{Y^\prime} v\|_{C_g^{1,\alpha}(B(P, \frac{1}{2}\eta))}
\leq \delta^{-p+3} CB^{p-3}(p-2)!d(P)^{1+\epsilon}\tilde{d}(P)^{-p+2}.
\end{align}

Now consider the special case that exactly one $Y'$ in $D^p_{Y'}$ is $Y_n$. Our goal is to show that \eqref{eq-Dpv-Linf-spe} and \eqref{eq-C1a2}  hold in this case.
Take $D^p_{Y'} v = Y_\alpha D^{p-1}_{Y^n} v$, $\alpha\in I_2,$ for example. Otherwise we can exchange the orders of the differentiation.  The problem in estimating $|Y_\alpha D^{p-1}_{Y^n} v|$ in $T_p$ is that while exchanging the orders of differentiation, terms like
\begin{align}\label{eq-Bad}
dY_\gamma D^p_{Y_n} v, \,\,d^\frac{3}{2}  D^{p+1}_{Y_n} v,\,\, d^\frac{3}{2} ( D^p_{Y_n} v)_d, 
\end{align} 
appear in $\tilde G_p.$  \eqref{eq-C1a1} is still correct with $B(P,\frac{1}{2}\eta)$ replaced by $B(P, \eta)$, for probably a large $C$. Thus we have
\begin{align}\begin{split}\label{eq-Spe-Ca}
&\|dY_\gamma D^p_{Y_n} v\|_{C_g^{\alpha}(B(P, \eta))} +\|d^\frac{3}{2}  D^{p+1}_{Y_n} v\|_{C_g^{\alpha}(B(P, \eta))}+\|d^\frac{3}{2} ( D^p_{Y_n} v)_d\|_{C_g^{\alpha}(B(P, \eta))}\\
&\leq \sqrt{d(P)}\|D^p_{Y^n} v\|_{C_g^{1,\alpha}(B(P, \eta))}\\
&\leq \delta^{-p+3}CB^{p-3}(p-2)!d(P)^{1+\epsilon}\tilde{d}(P)^{-p+2}.
\end{split}
\end{align}
The set of all balls $B(P, \eta)$ for $P \in T_p$ covers $T_{p-1}$.
In fact, for any $z \in T_{p-1}$, we take the point $P'$ that satisfies $y' (P') =y' (z)$ and $ d(P')= (1-10^{-10})(\tilde d(z)/p)^2$. Then $P' \in T_p$ and $z \in B(P', \eta).$ 
Based on this, when $P\in T_p,$   for any $z\in G_{P, \delta, \delta^2} \subseteq T_{p-1}$,
\begin{align*}
&d|Y_\gamma D^p_{Y_n} v(z) | +d^\frac{3}{2}|  D^{p+1}_{Y_n} v(z) |+d^\frac{3}{2}| ( D^p_{Y_n} v)_d(z) |\\
&
\leq \delta^{-p+3}CB^{p-3}(p-2)!d(z)^{1+\epsilon}\tilde{d}(P)^{-p+2}.
\end{align*}
Then terms in \eqref{eq-Bad} are bounded.
The maximum principle and  \eqref{eq-TpC-Linf} imply that \eqref{eq-Dpv-Linf-spe} holds for the special case.
Applying the interior $C^{1, \alpha}_g$ estimate, we derive that
\eqref{eq-C1a2} also holds when exactly one of the $Y'$'s are not $Y_n.$ Thus we completed the discussion of the special case.

To track the constants appeared in the estimates, we denote the larger $C$ in \eqref{eq-C1a1}, \eqref{eq-C1a2} as $C_1$.
Inductively, we  prove that for $l\leq p-1$ and $P \in T_p$,
\begin{align}\label{eq-induct-a}
\|D^l_{Y^\prime} v\|_{C_g^{2,\alpha}(B(P,  \frac{1}{2}\eta))}
\leq \delta^{-l+2} C_0  B^{l-2}(l-2)!d(P)^{1+\epsilon}\tilde{d}(P)^{-l+2},
\end{align}
where $C_0$ is independent of $B, p.$
Take $l=p-1$ for an  illustration.  
For $w \in \xi_{D^{p-1}_{Y^\prime} v}$, and $w \neq d^2 (D^{p-1}_{Y^\prime} v)_{dd}$, it follows directly from \eqref{eq-C1a1}, \eqref{eq-C1a2} that
\begin{align}\label{eq-w-Ca-ana}
\|w\|_{C_g^{\alpha}(B(P,  \frac{1}{2}\eta))}
\leq \delta^{-p+3} C_2 C_1  B^{p-3}(p-3)!d(P)^{1+\epsilon}\tilde{d}(P)^{-p+3}.
\end{align}
For example, applying \eqref{eq-C1a1}, \eqref{eq-C1a2},
\begin{align*}
&\|d^2 Y_n (D_{Y'}^{p-1}v)_d\|_{C_g^{\alpha}(B(P,  \frac{1}{2}\eta))} \\
&= \|d^2 (Y_n D_{Y'}^{p-1}v)_d\|_{C_g^{\alpha}(B(P,  \frac{1}{2}\eta))} + \sum_{\beta\in I_1}  \|d^2 f_\beta Y_\beta D_{Y'}^{p-1}v\|_{C_g^{\alpha}(B(P,  \frac{1}{2}\eta))}\\
&\leq \delta^{-p+3} C_2 C_1  B^{p-3}(p-3)!d(P)^{1+\epsilon}\tilde{d}(P)^{-p+3}.
\end{align*}
For $(D_{Y'}^{p-1} v)_{dd}$, 
we consider the equation \eqref{eq-Hp-Gp} with $p$ replaced by $p-1$ and write it as
\begin{align}\begin{split}\label{eq-p-1-ana}
a_{dd}( D_{Y'}^{p-1} v)_{dd} &= \left(a_{dd}( D_{Y'}^{p-1} v)_{dd}- \overrightarrow{a} \cdot \xi_{D_{Y'}^{p-1} v }\right)\\
&\qquad+ (n+1)D_{Y'}^{p-1} v  +\tilde H_{p-1}  +\tilde G_{p-1}.
\end{split}
\end{align} 
Then we estimate $C^\alpha_g(B(P,\frac{1}{2}\eta))$ norm of the right-hand side by \eqref{eq-w-Ca-ana} and \eqref{eq-induct-a}.
In fact, the $C^\alpha_g(B(P, \frac{1}{2}\eta))$ norm of $\left(a_{dd}( D_{Y'}^{p-1} v)_{dd}-   \overrightarrow{a} \cdot \xi_{D_{Y'}^{p-1} v }\right) + (n+1)D_{Y'}^{p-1} v$ is bounded by 
\begin{align*}
\delta^{-p+3} C_3 C_2 C_1  B^{p-3}(p-3)!d(P)^{1+\epsilon}\tilde{d}(P)^{-p+3}.
\end{align*}
The $C^\alpha_g(B(P, \frac{1}{2}\eta))$ norm of $\tilde H_{p-1}  +\tilde G_{p-1}$ is bounded by
\begin{align}\label{eq-G-p-1}
\delta^{-p+3} C_4  B^{p-4}(p-3)!d(P)^{1+\epsilon}\tilde{d}(P)^{-p+3}.
\end{align}
Here $C_4$ depends on $C_0$. We take
\begin{align*}
C_0 = \sup_{z\in B(P, \frac{1}{2}\eta)} \left\lbrace d(z)^2 a^{-1}_{dd}(z)\right\rbrace \cdot (C_3C_2C_1 +1)
\end{align*}
 and set $B>C_4$ so that in \eqref{eq-G-p-1}, $C_4  B^{p-4} \leq B^{p-3}.$
This completes the proof of \eqref{eq-induct-a}. 
\eqref{eq-induct-a}  implies that
\begin{align}\label{eq-G-H-ana}
\|\tilde H_p +\tilde G_p\|_{C_g^\alpha (B(P, \frac{1}{2}\eta))} \leq \delta^{-p+2} CB^{p-3}(p-2)!d(P)^{1+\epsilon}\tilde{d}(P)^{-p+2}.
\end{align}
For the special case that exactly one $Y'$ in $D^p_{Y'}$ equals $Y_n$, we also need  \eqref{eq-Spe-Ca} to estimate $\tilde G_p$.
In obtaining \eqref{eq-G-p-1} and \eqref{eq-G-H-ana}, we encounter the issue of switching orders of the differentiation. This can be resolved by inductively proving
\begin{align*}
\|\widehat{D^{l}_{Y'} \xi_v}\|_{C_g^{2,\alpha}(B(P,  \frac{1}{4}\eta))}
\leq \delta^{-l+2}C B^{l-2}(l-2)!d(P)^{1+\epsilon}\tilde{d}(P)^{-l+2}.
\end{align*}

By the Schauder estimates, for probably a larger $C$,
\begin{align}\label{eq-C2a-Tp}
\|D^p_{Y^\prime} v\|_{C_g^{2,\alpha}(B(P, \frac{1}{4}\eta))}
\leq \delta^{-p+2}CB^{p-3}(p-2)!d(P)^{1+\epsilon}\tilde{d}(P)^{-p+2}.
\end{align}
Therefore,  \eqref{eq-TanEsmNoN} holds for all $P\in T_p$.

\smallskip

{\it Step 3.} We now prove \eqref{eq-TanEsmNoN} for  $l=p$ in $T_p^C \cap G_R$. 
Define
\begin{align}\label{eq-norm-w}
\|w\|_{\varrho}=\sup\{\left(d^{-1-\epsilon}|w| \right)(z): z \in T_{p}^C \cap G_R, \tilde{d}(z)\geq \varrho\}.
\end{align}
Set
\begin{align}\label{eq-def-wpk}
w_{p,k}= D^{p-k}_{Y^\prime}D^k_{Y_n} v.
\end{align}
Here  we assume that none of the $Y'$'s equals $Y_n$ or $Y_{2n}$.

We claim 
\begin{align}
\label{eq-Step3-claim}
&\max_{0\leq k\leq p}\| \delta_p^{p-2} \widehat{\xi_{w_{p,k}}}\|_\varrho \leq \frac{1}{10}\max_{0\leq k\leq p}\|\delta_p^{p-2}  \widehat{\xi_{w_{p,k}}}\|_{(1-\frac{1}{p+1})\varrho}
+CB^{p-3} (p-2)! \varrho^{-p+2}.
\end{align}
We need the following result from \cite{Friedman1958}.

\begin{lemma}
\label{lem-LocTanAna}
Let $g(\varrho)$ be a positive monotone decreasing function, defined in the interval $0\leq \varrho \leq 1$ and satisfying 
\begin{align*}
g(\varrho)\leq \frac{1}{10}g\left(\varrho\left(1-\frac{1}{n}\right)\right)+\frac{C}{\varrho^{n-3}},\end{align*}
for some $n\geq 4$ and $ C>0$. 
Then, 
$$g(\varrho)< CA\varrho^{-n+3},$$ 
for some constant $A$ independent of $C, n, \varrho$.
\end{lemma}

By applying Lemma \ref{lem-LocTanAna} to \eqref{eq-Step3-claim}, we obtain
\begin{align}\label{eq-DelKWpk}
\max_{0\leq k\leq p}\|\delta_p^{p-2}  \widehat{\xi_{w_{p,k}}}\|_\varrho 
\leq CB^{p-3} (p-2)! \varrho^{-p+2},
\end{align}
which is the desired estimate to complete the induction.

Fix a $\varrho>0$. For any $P\in T_p^C \cap G_R$ with $\tilde{d}(P)\geq\varrho$, set
\begin{equation}\label{eq-xi}
 \tau = \frac{\delta}{N\sqrt{d(P)}},
\end{equation} 
where $N$ is a constant to be determined, which does not depend on $B, p$.
As $\delta\leq \sqrt{d(P)}$,  
$\tau \leq \frac{1}{N}.$

Replacing $p$ by $p+1$ and $X'$ by $Y'$ in  \eqref{eq-Hp-Gp} and multiplying it by $\tau^2$, we derive
\begin{align}\label{eq-Hp-Gp+1}
\tau^2 \overrightarrow{a} \cdot \xi_{D_{Y'}^{p+1} v}- (n+1) \tau^2 D_{Y'}^{p+1} v =\tau^2 \tilde H_{p+1}  +\tau^2 \tilde G_{p+1}.
\end{align} 
This equation is uniformly elliptic under  the coordinate $\tau^{-1}\vartheta$. The metric ball  with radius $\eta$ centered at $P$ under the coordinate $\tau^{-1}\vartheta$ is $B(P, \tau\eta)$ under $\vartheta$, which is contained in $G_R.$
 Denote the $C^{1, \alpha}$ estimates  under coordinates $\tau^{-1}\vartheta$ as $C^{1,\alpha}_{g/\tau^2}$.
By the interior $C^{1, \alpha}_{g/\tau^2}$ estimates for \eqref{eq-Hp-Gp+1}, we derive
\begin{align}\begin{split}\label{eq-w-p+1}
&\|w_{p+1,k}\|_{C_{g/\tau^2}^{1,\alpha}(B(P, \frac{1}{2}\tau\eta))}\\
&\leq C ( \|w_{p+1,k}\|_{L^{\infty}(B(P, \tau\eta))}
+ \tau^2\|\tilde H_{p+1} + \tilde G_{p+1}\|_{L^{\infty}(B(P, \tau \eta))}).
\end{split}
\end{align}

In the rest of this proof, we always denote $Y'$ as a non-$Y_n$ vector.
For 
\begin{align}\label{eq-wpk-k}
w_{p, k}=Y_n w_{p-1, k-1}, 1\leq k\leq p,
\end{align}
we set $ w_{p+1,k} = Y' w_{p, k}$.
Applying  \eqref{eq-TanAna02} for  $l=p-1$ and the fact  that $\delta\leq \sqrt{d}$, we have that in $G_R,$
\begin{align}\label{eq-estimate-p+1-k}
 |w_{p+1,k}|\leq  \delta_p^{-p+3} DB^{p-3}  (p-3)! d^{-\frac{1}{2}+\epsilon} \tilde{d}^{-(p-3)}.
\end{align}
By \eqref{eq-w-p+1}, \eqref{eq-estimate-p+1-k},
\begin{align*}
&\|w_{p+1,k}\|_{C_{g/\tau^2}^{1,\alpha}(B(P, \frac{1}{2}\tau\eta))}\\
&\leq  \delta^{-p+3}C B^{p-3} (p-3)!d(P)^{- \frac{1}{2}+\epsilon}  \tilde{d}(P)^{-(p-3)}\\
&\qquad+ C \tau^2 \|\tilde H_{p+1} +\tilde G_{p+1}\|_{L_{g/\tau^2}^{\infty}(B(P,\tau\eta))}.
\end{align*}
When $1\leq k\leq p-1$, all of the $(p+2)$-th derivatives in $\tilde H_{p+1} + \tilde G_{p+1}$ are in the form of  $\widehat{\xi_{ w_{p, k-1}}}, \widehat{\xi_{w_{p, k}}}, \widehat{\xi_{ w_{p, k+1}}} $
and there are at most $C p$ such terms, for some constant $C$ that only depends on $n$.  The rest terms in $\tilde H_{p+1} + \tilde G_{p+1}$ are bounded by induction. 
Here $\widehat{\xi_{ w_{p, k+1}}}$ appears as exchanging the orders of $Y_\alpha$ and $Y_\beta$, for $\alpha, \beta \in I_2,$ may generate $Y_n$. For example, when $k=p-1$, 
\begin{align*}
Y' Y_n^{p-1} Y_\alpha ( d Y_\beta Y_\gamma v) =Y' Y_n^{p-1} Y_\beta ( d Y_\alpha  Y_\gamma v) + d  Y' Y_n^{p-1} (S_{\alpha\beta}^n Y_n)   Y_\gamma v + \cdots,
\end{align*}
from which we obtain a term 
$d  Y' Y_n^{p}  Y_\gamma v \in \widehat{\xi_{w_{p,p}}}=\widehat{\xi_{w_{p,k+1}}}$.
Thus
\begin{align}\begin{split}\label{eq-w-p+1-k}
&\|w_{p+1,k}\|_{C_{g/\tau^2}^{1,\alpha}(B(P, \frac{1}{2} \tau \eta))}\\
&\leq  \delta^{-p+3}  C B^{p-3} (p-3)!d(P)^{-\frac{1}{2}+\epsilon} \tilde{d}(P)^{-(p-3)} + C p \tau^2 \|\widehat{\xi_{w_{p,k-1}}}\|_{L^\infty(B(P, \tau\eta))}\\
&\qquad+ Cp \tau^2\|\widehat{\xi_{w_{p,k}}}\|_{L^\infty(B(P, \tau\eta))}+ C p \tau^2\|\widehat{\xi_{w_{p,k+1}}}\|_{L^\infty(B(P, \tau\eta))}.
\end{split}
\end{align}
Multiplied by $\tau^{-1} \sqrt{d(P)}$,
\eqref{eq-w-p+1-k}, \eqref{eq-xi}  imply that at $P$,
\begin{align}
\begin{split}\label{eq-Ynprime}
&d^{\frac{3}{2}} |Y_n Y^\prime w_{p,k}| + d^{\frac{3}{2}} |( Y^\prime w_{p,k})_d| +
d |D^2_{Y^\prime}w_{p,k}|\\
&\leq  \delta^{-p+2} C B^{p-3} (p-2)! d^{1+\epsilon} \tilde{d}^{-(p-2)}+ C \tilde d \|\widehat{\xi_{w_{p,k-1}}}\|_{L^\infty(B(P, \tau\eta))}\\
&\qquad 
+ C \tilde d\| \widehat{\xi_{w_{p,k}}}\|_{L^\infty(B(P, \tau\eta))}+ C \tilde d\| \widehat{\xi_{w_{p,k+1}}}\|_{L^\infty(B(P, \tau\eta))}.
\end{split}
\end{align}
Here we applied $p^{-1} \tilde d(P) = \delta\leq d(P)^\frac{1}{2}$.

When $k=p$ in \eqref{eq-wpk-k}, there are terms in $\tilde G_{p+1}$ that are  not included in $\widehat{\xi_{w_{p, l}}}$ for any $l.$
In fact,   switching the orders of differentiation from $Y'Y_n^p (\xi_v)$ to  $\xi_{Y'w_{p, p}}$ will result in extra terms in the form of
\begin{align}\label{eq-except}
d Y_n D_{Y_\alpha} D_{Y_n}^p  v,\,\, d^\frac{3}{2}  D^{p+2}_{Y_n} v,\,\, d^\frac{3}{2} ( D^{p+1}_{Y_n} v)_d, 
\end{align}
where $\alpha \in I_2$. They are entries in $d^{-\frac{1}{2}}\widehat{\xi_{w_{p, p}}}$ and there are at most $C$ such terms, where $C$ only depends on $n$. By making $C$ larger, similar to \eqref{eq-w-p+1-k}, we have
\begin{align}
\begin{split}\label{eq-w-p+1-p}
&\|w_{p+1,p}\|_{C_{g/\tau^2}^{1,\alpha}(B(P, \frac{1}{2} \tau \eta))}\\
&\leq  \delta^{-p+3}  C B^{p-3} (p-3)!d(P)^{-\frac{1}{2}+\epsilon} \tilde{d}(P)^{-(p-3)} + C p \tau^2 \|\widehat{\xi_{w_{p,p-1}}}\|_{L^\infty(B(P, \tau\eta))}\\
&\qquad+ C(p+ d(P)^{-\frac{1}{2}}) \tau^2\|\widehat{\xi_{w_{p,p}}}\|_{L^\infty(B(P, \tau\eta))},
\end{split}
\end{align}
which implies that at $P$,
\begin{align}\begin{split}\label{eq-Ynprime-2}
& d^\frac{3}{2} |Y_n Y' w_{p,p}| + d^\frac{3}{2}|( Y' w_{p,p})_d|+d |D_{Y'}^2 w_{p,p}| \\
& \leq \delta^{-p+2}  C B^{p-3} (p-2)!d^{1+\epsilon} \tilde{d}^{-(p-2)}\\
&\qquad + C \tilde d \|\widehat{\xi_{w_{p,p-1}}}\|_{L^\infty(B(P, \tau\eta))}+ C (\tilde d+\tau)\|\widehat{\xi_{w_{p,p}}}\|_{L^\infty(B(P, \tau\eta))}.
\end{split}
\end{align}

Back to \eqref{eq-wpk-k}, we set
\begin{align}
w_{p+1, k+1}=Y_n w_{p, k}.
\end{align}
Same arguments as above show that at $P$, for $1\leq k \leq p,$
\begin{align}\label{eq-Ynprime-3}
\begin{split}
&d^2 |Y_n^2 w_{p,k}| + d^2 |( Y_n w_{p,k})_d| + d^\frac{3}{2}| Y' Y_n w_{p,k}| \\
& \leq \delta^{-p+2}  C B^{p-3} (p-2)!d^{1+\epsilon} \tilde{d}^{-(p-2)}+ C (\tilde d+\tau) \|\widehat{\xi_{w_{p,k}}}\|_{L^\infty(B(P, \tau\eta))}\\
&\qquad + C (\tilde d+\tau) \|\widehat{\xi_{w_{p,k+1}}}\|_{L^\infty(B(P, \tau\eta))}+  C (\tilde d+\tau)  \|\widehat{\xi_{w_{p,k+2}}}\|_{L^\infty(B(P, \tau\eta))}.
\end{split}
\end{align}
Here we set $w_{p,k}=0$ if $k<0$ or $k>p.$
So for $1\leq k\leq p$, \eqref{eq-Ynprime}, \eqref{eq-Ynprime-2} and \eqref{eq-Ynprime-3} indicate estimates for all entries in $\xi_{w_{p,k}}$ but $d^2 ( w_{p,k})_{dd}$, which is to be estimated using   \eqref{eq-Hp-Gp}. 
Thus at $P$,  for $1\leq k\leq p$,
\begin{align}\begin{split}\label{eq-Wpk}
|\xi_{w_{p,k}}|
&\leq \delta^{-p+2} CB^{p-3} (p-2)! d^{1+\epsilon} \tilde{d}^{-(p-2)}\\
&\qquad + C (\tilde d+\tau) \|\widehat{\xi_{w_{p,k-1}}}\|_{L^\infty(B(P, \tau\eta))}
+ C (\tilde d+\tau) \| \widehat{\xi_{w_{p,k}}}\|_{L^\infty(B(P, \tau\eta))}\\
&\qquad+ C (\tilde d+\tau) \| \widehat{\xi_{w_{p,k+1}}}\|_{L^\infty(B(P, \tau\eta))}+ C (\tilde d+\tau) \|\widehat{ \xi_{w_{p,k+2}}}\|_{L^\infty(B(P, \tau\eta))}
\end{split}
\end{align}
\eqref{eq-Wpk} is also true when $k=0.$ In fact, we can denote $w_{p,k}= Y' w_{p-1, k}$, for $0\leq k \leq p-1$. Setting $w_{p+1, k} = Y' w_{p, k}$ and $w_{p+1, k+1} = Y_n w_{p+1, k+1}$, similar arguments as above imply \eqref{eq-Wpk} for case $k=0$.


Finally, we multiply \eqref{eq-Wpk}  by $\delta^{p-2}$ to  obtain that at $P,$
\begin{align*}
\max_{0\leq k\leq p} ( \delta^{p-2}|\xi_{w_{p,k}}|)
&\leq CB^{p-3} (p-2)! d^{1+\epsilon} \varrho^{-(p-2)}\\
 &\qquad
 +C(\tilde{d}+\tau)\max_{0\leq k\leq p} \delta^{p-2} \|\widehat{\xi_{w_{p,k}}}\|_{L^\infty(B(P, \tau\eta))},
\end{align*}
since $\tilde{d}(P)\geq\varrho$.

For any point $z\in B(P, \tau\eta)$,  $\tilde d(z) \geq (1 - \frac{1}{p+2}) \tilde d(P).$
At points in $B(P, \tau\eta)$  but not in $T_p^C$, 
$\xi_{w_{p,k}}$ is already estimated in Step 2.   As  $\tilde d(P)\leq  R$, $\tau \leq \frac{1}{N}$,
\begin{align}\begin{split}\label{eq-SumWPK}
\max_{0\leq k\leq p}\| \delta^{p-2}\widehat{ \xi_{w_{p,k}}}\|_{\varrho} 
&\leq \frac{1}{10}\max_{0\leq k\leq p}\|\delta^{p-2}\widehat{\xi_{w_{p,k}}}\|_{(1-\frac{1}{p+2})\varrho}+CB^{p-3} (p-2)! \varrho^{-p+2},
\end{split}
\end{align}
provided that $C (R+\frac{1}{N})\leq {1}/{10}$, which holds if we set $N = 20C$ and assume  $C R\leq {1}/{20}$. If $R$ is large, we can divide $G_R$ into small cylinders and estimate on each small cylinder. 
Here we ignored the first derivatives of $w_{p,k}$ in $\xi_{w_{p,k}}$, since  $d^\frac{1}{2}|Y^\prime w_{p, k}|$, $d |Y_n w_{p, k}|$, and $d |(
w_{p,k})_d|$ can be estimated directly by the induction. For example,
\begin{align*}
\max_{0\leq k\leq p}\| \delta_p^{p-2} d(w_{p,k})_d\|_\varrho 
&\leq \max_{0\leq k\leq p-1}  \|  d^{-1} \delta_p^{p-2} \widehat{\xi_{w_{p-1,k}}}\|_\varrho\\
&\leq \varrho^{-1} p  \max_{0\leq k\leq p-1}  \| \delta_p^{p-3} \widehat{\xi_{w_{p-1,k}}}\|_\varrho\\
&\leq DB^{p-3} (p-2)! \varrho^{-p+2}.
\end{align*}

 Applying Lemma \ref{lem-LocTanAna}, \eqref{eq-TanAna02} can be derived similarly as in the proof of Theorem \ref{Thm-TanAnaly}.
\end{proof}

 In the domain 
$G_R\cap \{\tilde{d}(y)\geq \varrho\}$, \eqref{eq-TanEsmNoN} implies that
\begin{align*}
 |\xi_{D_{Y^\prime}^l v}|
 &\leq   l^{l-2} DB^{l-2} (l-2)! d^{1+\epsilon} \tilde{d}^{-2(l-2)}\\
    &\leq   D (\varrho^{-2}B)^{l-2} (2l)! d^{1+\epsilon}. 
\end{align*}
Note $(2l)!< (2\cdot 4\cdot 6\cdots (2l))^2= 2^{2l} (l!)^2$. 
Hence, 
\begin{align}\label{eq-estimate-Gevrey}
 |\xi_{D_{Y^\prime}^l v}|
    &<  C (4\varrho^{-2}B)^{l-2} (l!)^2 d^{1+\epsilon}.
\end{align}
Therefore,  $v$ is in the Gevrey space of order 2 along the tangential directions. 

\appendix

 \section{Analyticity Type Estimates}
 \label{sec-Ana-estm}
 If $p<0$, we assume that $p!=0.$
The following result is essentially due to Friedman \cite{Friedman1958}. Also see \cite{HanJiang2}.
\begin{lemma}\label{lemma-Composition}
Let $\Omega$ be a domain in $\mathbb R^n$ and $p$ be a positive integer. Assume that $\Phi$ 
is a $C^p$-function in $ \Omega\times\mathbb R^N$ satisfying, for any 
$(x,y)\in \Omega\times\mathbb R^N$ and any nonnegative integers $j$ and $k$ 
with $j+k\le p$, 
\begin{equation}\label{eq-Composition1}
\Big|\frac{\partial^{j+k}\Phi(x,y)}{\partial x^j\partial y^k}\Big|
\le A_0A_1^jA_2^k(j-2)!(k-2)!,\end{equation}
for some positive constants $A_0$, $A_1$ and $A_2$. Then, there exist 
positive constants $B_0$, $\widetilde B_0$ and $B_1$, 
depending only on $n$, $N$, $A_0$, $A_1$ and $A_2$, such that, 
for any $C^p$-function $y=(y_1, \cdots, y_N): \Omega\to\mathbb R^N$, if 
for any $x\in \Omega$ and any nonnegative integer $k\le p$, 
\begin{equation}\label{eq-Composition2}
\sum_{i=1}^N|\partial^k_xy_i(x)|\le B_0B_1^{(k-2)^+}(k-2)!,\end{equation}
then, for any $x\in\Omega$, 
\begin{equation}\label{eq-Composition3}
|\partial_x^p[\Phi(x,y(x))]|\le \widetilde B_0B_1^{(p-2)^+}(p-2)!.\end{equation}
\end{lemma}

\begin{remark}\label{rmk-Composition} Write $x=(x',x_n)$. In Lemma \ref{lemma-Composition}, 
if we assume \eqref{eq-Composition1} and 
\eqref{eq-Composition2} hold only for $D_{x'}$ instead of $D_x$, then 
\eqref{eq-Composition3} holds for $D_{x'}$.\end{remark}

\begin{lemma}\label{lem-Composition2}
Let $p$ be any integer and $p\geq 2$. Then there is a universal constant $C$ such that
\begin{align} \label{eq-p-fact-1}
 \sum_{l=1}^{p-1} {p-1 \choose l}(l-2)! \cdot (p-2-l)! \leq C  (p-2)!,
\end{align}
and 
\begin{align}\label{eq-p-fact-2}
\sum_{l=0}^{p-1}  {p-1 \choose l}  (l-2) ! (p-3-l)!\leq C(p-3)!.
\end{align}
\end{lemma}

\begin{proof}
When $l=p-1$,
$${p-1 \choose l}(l-2)! \cdot (p-l-2)!  = (p-3)!.$$
And
\begin{align*}
  \sum_{l=1}^{p-2} {p-1 \choose l}(l-2)! \cdot (p-2-l)!&\leq 
\sum_{l=2}^{p-2} \frac{p-1}{l(l-1)(p-l-1)}(p-2)!
\\
&\leq C(p-2)!,
\end{align*}
as
\begin{align*}
\sum_{l=2}^{p-2} \frac{p-1}{l(l-1)(p-l-1)}\
&=\sum_{l=2}^{p-2} \frac{1}{l-1}\left(\frac{1}{l}+\frac{1}{p-l-1}\right)\\
&=\sum_{l=2}^{p-2} \frac{1}{(l-1)l}+\frac{1}{p-2}\sum_{l=2}^{p-2}\left( \frac{1}{l-1}+\frac{1}{p-l-1}\right),
\end{align*}
which is bounded by a universal constant $C$. Notice that $\sum_{l=2}^{p-2} \frac{1}{l-2} < 1+\ln p$ and $\frac{1+\ln p}{p-2}$ is bounded by a universal constant when $p \geq 3$.
Then we derive \eqref{eq-p-fact-1}.

For \eqref{eq-p-fact-2}, first we observe that for cases $l=0, 1, p-1$ or $p-2$, 
$$ {p-1 \choose l}  (l-2) ! (p-3-l)!\leq C(p-3)!.$$
And
\begin{align*}
&\sum_{l=2}^{p-3}  {p-1 \choose l}  (l-2) ! (p-3-l)!\\
&= (p-3)! \sum_{l=2}^{p-3}  \frac{(p-1)(p-2)}{l(l-1)(p-1-l)(p-2-l)} \\
&\leq 2(p-3)! \sum_{l=2}^{p-3}  \left(\frac{1}{l} + \frac{1}{p-2-l}\right)\left(\frac{1}{l-1} + \frac{1}{p-1-l}\right),
\end{align*}
where similarly, the summation over $l$ is bounded by a universal constant. Then we derive \eqref{eq-p-fact-2}.
\end{proof}

\end{document}